\RequirePackage{silence} \WarningsOff[ifplatform]
\documentclass[CRMATH,Unicode,manuscript]{cedram}
\newcommand{\R}{{\mathbb R}}
\renewcommand{\S}{{\mathbb S}}
\newcommand{\N}{{\mathbb N}}

\newcommand{\be}[1]{\begin{equation}\label{#1}}
\newcommand{\ee}{\end{equation}}
\renewcommand{\(}{\left(}
\renewcommand{\)}{\right)}
\newcommand{\isd}[1]{\int_{\S^d}{#1}\,d\mu_d}
\newcommand{\nrmsd}[2]{\|{#1}\|_{\mathrm L^{#2}(\S^d,d\mu_d)}}

\newcommand{\ird}[1]{\int_{\R^n}{#1}\,dx}
\newcommand{\irn}[1]{\int_{\R^n}{#1}\,dy}
\newcommand{\irng}[1]{\int_{\R^n}{#1}\,d\gamma}
\newcommand{\nrmng}[2]{\|#1\|_{\mathrm L^{#2}(\R^n,d\gamma)}}
\newcommand{\nrmnmu}[2]{\|#1\|_{\mathrm L^{#2}(\R^n,d\mu)}}
\newcommand{\Nrmnmu}[2]{\big\|#1\big\|_{\mathrm L^{#2}(\R^n,d\mu)}}
\newcommand{\cb}[1]{\langle{#1}\rangle}
\newcommand{\msc}[1]{\href{https://mathscinet.ams.org/mathscinet/search/mscbrowse.html?sk=default&sk=#1&submit=Search}{#1}}
\newcommand{\tf}{\tfrac1d}

\newcommand{\irnmu}[1]{\int_{\R^n}{#1}\,d\mu}
\newcommand{\nrms}[2]{\left\|#1\right\|_{\mathrm L^{#2}(\S^d)}}
\newcommand{\irneps}[1]{\int_{B_\varepsilon}{#1}\,d\gamma}

\newcommand{\patch}{\if{}\else{}}
\raggedbottom

\title[On Gaussian interpolation inequalities]{On Gaussian interpolation inequalities}
\alttitle{Sur des in\'egalit\'es d'interpolation Gaussiennes}

\author{\firstname{Giovanni} \lastname{Brigati}\CDRorcid{0000-0002-3698-9995}}
\address{CEREMADE (CNRS UMR n$^\circ$~7534), PSL University, Universit\'e Paris-Dauphine,\newline Place de Lattre de Tassigny, 75775 Paris 16, France}
\email[G.~Brigati]{brigati@ceremade.dauphine.fr}
\author{\firstname{Jean} \lastname{Dolbeault}\CDRorcid{0000-0003-4234-2298}}
\address{CEREMADE (CNRS UMR n$^\circ$~7534), PSL University, Universit\'e Paris-Dauphine,\newline Place de Lattre de Tassigny, 75775 Paris 16, France}
\email[J.~Dolbeault]{dolbeaul@ceremade.dauphine.fr}
\author{\firstname{Nikita} \lastname{Simonov}\CDRorcid{0000-0002-3241-190X}}
\address{LJLL (CNRS UMR n$^\circ$~7598) Sorbonne Universit\'e,\newline 4 place Jussieu, 75005 Paris, France}
\email[N.~Simonov]{nikita.simonov@sorbonne-universite.fr}

\subjclass[2020]{\msc{26D10}, \msc{46T12}, \msc{46E35}, \msc{39B62}, \msc{43A90}, \msc{49J40}, \msc{58E35}}

\keywords{logarithmic Sobolev inequality, Gagliardo-Nirenberg-Sobolev inequalities, Gaussian Poincar\'e inequality, sphere, spectral decomposition, entropy methods, Ornstein-Uhlenbeck operator, nonlinear diffusions, improved inequalities, stability}

\begin{abstract} 
This paper is devoted to Gaussian interpolation inequalities with endpoint cases corresponding to the Gaussian Poincar\'e and the logarithmic Sobolev inequalities, seen as limits in large dimensions of Gagliardo-Nirenberg-Sobolev inequalities on spheres. Entropy methods are investigated using not only heat flow techniques but also nonlinear diffusion equations as on spheres. A new stability result is established for the Gaussian measure, which is directly inspired by recent results for spheres.
\end{abstract}

\begin{altabstract} 
Cet article est consacr\'e \`a des in\'egalit\'es d'interpolation Gaussiennes, avec comme cas extrêmes l'in\'egalit\'e de Poincar\'e Gaussienne et l'in\'egalit\'e de Sobolev logarithmique, vues comme limites en grandes dimensions des in\'egalit\'es de Gagliardo-Nirenberg-Sobolev sur les sph\`eres. Les m\'ethodes d'entropie sont abord\'ees en utilisant non seulement des techniques bas\'ee sur l'\'equation de la chaleur mais aussi sur des \'equations de diffusion non-lin\'eaires, comme pour les sph\`eres. Un nouveau r\'esultat de stabilit\'e est \'etabli pour les mesures Gaussiennes, qui s'inspire directement de r\'esultats r\'ecents sur les sph\`eres.
\end{altabstract}

\begin{document}
\maketitle

\section{Introduction and main results}\label{Sec:Intro}

Let us consider the \emph{Gagliardo-Nirenberg-Sobolev inequalities} on the unit $d$-dimensional sphere
\be{Ineq:GNS}
\nrmsd{\nabla u}2^2\ge\frac d{p-2}\(\nrmsd up^2-\nrmsd u2^2\)\quad\forall\,u\in\mathrm H^1(\S^d,d\mu_d)
\ee
for any $p\in[1,2)\cup(2,+\infty)$ if $d=1$, $2$, and for any $p\in[1,2)\cup(2,2^*]$ if $d\ge3$. Here $d\mu_d$ denotes the uniform probability measure on $\S^d\subset\R^{d+1}$ and, if $d\ge3$, $2^*=2\,d/(d-2)$ is the critical Sobolev exponent. By convention, we take $2^*=+\infty$ if $d=1$ or $2$. The purpose of this paper is to clarify the links of these interpolation inequalities with the family of \emph{Gaussian interpolation inequalities}
\be{Ineq:PLS}
\nrmng{\nabla v}2^2\ge\frac1{2-p}\(\nrmng v2^2-\nrmng vp^2\)\quad\forall\,v\in\mathrm H^1(\R^n,d\gamma)
\patch\ee
where the exponent $p$ is taken in the range $1\le p<2$. Inequality~\eqref{Ineq:PLS} is intermediate between the Poincar\'e inequality corresponding to $p=1$ and the \emph{Gaussian logarithmic Sobolev inequality}
\[
\nrmng{\nabla v}2^2\ge\frac12\irng{|v|^2\,\log\(\frac{|v|^2}{\nrmng v2^2}\)}\quad\forall\,v\in\mathrm H^1(\R^n,d\gamma)
\]
obtained as a limit case of~\eqref{Ineq:PLS} as $p\to2_-$. Here $d\gamma(y):=(2\,\pi)^{-n/2}\,e^{-\frac12\,|y|^2}\,dy$ denotes the centred normalized Gaussian probability measure and the dimension $n$ is any positive integer.

\medskip It is somewhat classical that, if we consider the Sobolev inequality on the sphere, \emph{i.e.}, Inequality~\eqref{Ineq:GNS} with $p=2^*$ and $d\ge3$, rescale it to the sphere of radius $\sqrt d$ and fix a function depending on $n$ variables only on this sphere, since the curvature tends to $0$ at the right order as one takes the limit as $d\to+\infty$, then the sphere tends to the flat space with Gaussian measure. For instance, we read (with adapted notation) from~\cite[p.~4818]{MR1164616} that \emph{if we rescale this inequality so as to be on a sphere of radius $\sqrt d$ and take the limit $d\to\infty$ or $p\to2$ we obtain in the Poincar\'e limit the Gross logarithmic inequality for the Gaussian measure since $-(1/d)\,\Delta$ on $\S^d$ goes to $-\Delta+ x\cdot\nabla$ on the infinite-dimensional limit.} The last part of the sentence refers to a result known as the Maxwell–Poincar\'e lemma : see~\cite{MR353471} and~\cite[Remark~4, p.~254]{Vershik:2007nr} for some historical comments. The statement of~\cite{MR1164616} has been made more precise later in~\cite{MR1479040,MR1673903} using a slightly different limit. However, to our knowledge, the infinite-dimensional limit has not been considered in the subcritical range $p<2^*$. 

On the sphere, Inequality~\eqref{Ineq:GNS} follows from~\cite{MR889476,MR808640} for any $p\ge1$ if $d=1$ and any $p\in[1,2^\#]$ with $2^\#:=(2\,d^2+1)/(d-1)^2$ if $d\ge2$. The proof in the range $p\in(2^\#,+\infty)$ if $d=2$ and $p\in(2^\#,2^*]$ if $d\ge3$ can be found in~\cite[Corollary~6.1]{MR1134481},~\cite{MR1213110} and~\cite{MR1230930}. Also see~\cite{MR717827} in the case $p=2^*$. In the case of the Gaussian measure, we refer to~\cite{MR954373} (also see~\cite{MR1796718}) for a first proof of Inequality~\eqref{Ineq:PLS}. The formal analogy of~\eqref{Ineq:GNS} and~\eqref{Ineq:PLS} is striking. Although computations are somewhat standard, our first purpose is to make this point rigorous and recover~\eqref{Ineq:PLS} as a special limit of~\eqref{Ineq:GNS} as $d\to+\infty$.
\begin{theorem}\label{Thm:GNStoPLS} Let $n$ be a positive integer, $p\in[1,2)$ and consider a function $v\in\mathrm H^1(\mathrm R^n,dx)$ with compact support. For any $d\ge n$, large enough, if $u_d(\omega)=v\big(\omega_1/\sqrt d,\omega_2/\sqrt d,\ldots,\omega_n/\sqrt d\big)$ where $\omega=(\omega_1,\omega_2,\ldots,\omega_{d+1})\in\S^d\subset\R^{d+1}$, then
\begin{multline*}
\lim_{d\to+\infty}d\(\nrmsd{\nabla u_d}2^2-\frac d{2-p}\(\nrmsd{u_d}2^2-\nrmsd{u_d}p^2\)\)\\
=\nrmng{\nabla v}2^2-\frac1{2-p}\(\nrmng v2^2-\nrmng vp^2\)\,.
\end{multline*}
\end{theorem}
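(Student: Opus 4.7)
The plan is to transport the computation from $\S^d$ to $\R^n$ by pushing $d\mu_d$ forward through a suitable rescaling of the projection onto the first $n$ coordinates, and then to invoke the classical Maxwell--Poincar\'e lemma to pass to the Gaussian limit. Since $v$ has compact support, every integral effectively lives on a fixed compact set $K\subset\R^n$, reducing the analysis to convergence on a single compact domain.

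First, I introduce a variable $y\in\R^n$ built from the first $n$ components of $\omega$ with the appropriate $\sqrt d$-scaling dictated by the statement, so that the push-forward of $d\mu_d$ admits the explicit density
\[
\rho_{n,d}(y)=C_{n,d}\,\bigl(1-|y|^2/d\bigr)_+^{(d-n-1)/2}
\]
(with $C_{n,d}$ a beta-type normalising constant), and I record the asymptotics $C_{n,d}\to(2\pi)^{-n/2}$ together with $\rho_{n,d}\to(2\pi)^{-n/2}e^{-|y|^2/2}$ uniformly on every compact set. Second, I compute the tangential gradient: extending $u_d$ to a radially constant $\widetilde u_d$ on $\R^{d+1}\setminus\{0\}$ and using the identity $|\nabla_{\S^d}u_d|^2=|\nabla\widetilde u_d|^2-(\omega\cdot\nabla\widetilde u_d)^2$ yields an explicit algebraic formula expressing $|\nabla_{\S^d}u_d|^2$ as a linear combination of $|\nabla v(y)|^2$ and $(y\cdot\nabla v(y))^2$ with prescribed powers of $d$.

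Third, I rewrite each of $\nrmsd{\nabla u_d}2^2$, $\nrmsd{u_d}2^2$ and $\nrmsd{u_d}p^p$ as an integral of a smooth, compactly supported function of $y$ against $\rho_{n,d}(y)\,dy$, and apply dominated convergence on $K$ to replace $\rho_{n,d}$ by the Gaussian density $d\gamma$. The multiplications and subtraction appearing in the statement are arranged precisely so that the highest-order contributions in $d$ cancel, leaving the Gaussian defect $\nrmng{\nabla v}2^2-\frac{1}{2-p}\bigl(\nrmng v2^2-\nrmng vp^2\bigr)$; the auxiliary term involving $(y\cdot\nabla v(y))^2$ comes with an extra $1/d$ factor and therefore vanishes in the limit. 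The passage from $\nrmsd{u_d}p^p$ to $\nrmsd{u_d}p^2$ relies on continuity of $t\mapsto t^{2/p}$ at $t=\nrmng vp^p>0$, which is automatic unless $v\equiv 0$ (a trivial case).

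The main technical obstacle I anticipate is the careful book-keeping of the orders in $d$: the two leading quantities must cancel exactly before the residual Gaussian defect is revealed at the prescribed rate, so I need to track not only the pointwise limit $\rho_{n,d}\to(2\pi)^{-n/2}e^{-|y|^2/2}$ but also its subleading correction in $1/d$. The compact support of $v$ is what makes this tractable, converting global Gaussian integrals on $\R^n$ into integrals on a fixed compact set on which the density convergence is uniform, so that the passage to the limit reduces to an elementary though delicate algebraic expansion in $1/d$.
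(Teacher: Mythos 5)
Your overall route is genuinely different from the paper's: the paper never touches spherical marginals directly, but instead transports \eqref{Ineq:GNS} by stereographic projection to the weighted inequality \eqref{Ineq:wGNS} on $\R^d$, factors out the Aubin--Talenti profile, rescales by $\sqrt d$, restricts to functions of the first $n$ Euclidean variables and integrates out the remaining $d-n$ variables explicitly with Gamma-function/Stirling asymptotics (Lemma~\ref{Lem:WGNS-PLS}). Your plan --- push $d\mu_d$ forward onto $n$ suitably rescaled coordinates (Maxwell--Poincar\'e) and use the tangential-gradient identity --- is a sound and more elementary skeleton; what the paper's conformal detour buys is fully explicit constants and the structural fact that each rescaled term converges separately, in closed form.

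The genuine weak point is precisely the step you postpone, and your description of it is not what actually happens. The density you write, $\rho_{n,d}(y)=C_{n,d}\,(1-|y|^2/d)_+^{(d-n-1)/2}$, is that of $y=\sqrt d\,(\omega_1,\dots,\omega_n)$ (with the literal $\omega_i/\sqrt d$ the marginal concentrates at the origin and no Gaussian can appear), and with that variable the tangential-gradient identity gives $|\nabla_{\S^d}u_d|^2=d\,|\nabla v(y)|^2-(y\cdot\nabla v(y))^2$, hence
\begin{align*}
\nrmsd{\nabla u_d}2^2-\frac d{2-p}\big(\nrmsd{u_d}2^2-\nrmsd{u_d}p^2\big)
&=d\left[\int_{\R^n}|\nabla v|^2\,\rho_{n,d}\,dy-\frac1{2-p}\left(\int_{\R^n}|v|^2\,\rho_{n,d}\,dy-\Big(\int_{\R^n}|v|^p\,\rho_{n,d}\,dy\Big)^{2/p}\right)\right]\\
&\quad-\int_{\R^n}(y\cdot\nabla v)^2\,\rho_{n,d}\,dy\,.
\end{align*}
The bracket converges to the Gaussian deficit $\nrmng{\nabla v}2^2-\frac1{2-p}\big(\nrmng v2^2-\nrmng vp^2\big)$, which is strictly positive for every nontrivial compactly supported $v$ (no equality case of \eqref{Ineq:PLS} has compact support). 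So the spherical deficit is of size $d$ times the Gaussian deficit plus $O(1)$: there is no cancellation of ``highest-order contributions'', and multiplying by a further factor $d$, as in the displayed limit, gives something of order $d^2$, not a finite limit. The normalization has to be read as a division by $d$ (equivalently, work on the sphere of radius $\sqrt d$, where the factor disappears), which is also what the paper's own computation delivers once the constants of Lemma~\ref{Lem:WGNS-PLS} and the $2$-homogeneity of the deficit are tracked. Once you fix the scaling of the argument of $v$ and the power of $d$ consistently, your proof becomes easier than you anticipate: each of the three normalized terms converges on its own by uniform convergence of $\rho_{n,d}$ to the Gaussian density on the fixed compact support of $v$, no subleading $1/d$ expansion of $\rho_{n,d}$ is needed, and the only $d$-dependent remainder is the explicit term $\frac1d\int_{\R^n}(y\cdot\nabla v)^2\rho_{n,d}\,dy\to0$, which you already identified. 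As written, however, the claimed exact cancellation ``at the prescribed rate'' is a step that would fail. A small side remark: if $\widetilde u_d$ is taken radially constant, then $\omega\cdot\nabla\widetilde u_d=0$ and the subtraction in your identity is vacuous; the formula you quote is the one for the ambient (non-homogeneous) extension, though both extensions lead to the same final expression.
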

\medskip The \emph{carr\'e du champ} method has frequently been applied to prove Gaussian interpolation inequalities ranging between the logarithmic Sobolev inequality and the Poincar\'e inequality like~\eqref{Ineq:PLS} using the linear flow associated with the Ornstein-Uhlenbeck operator; see~\cite{MR889476}, and~\cite{MR3155209} for an overview. Still in the case of the Gaussian measure, we adopt here a new point of view by using \emph{nonlinear diffusion equations} in order to prove the same inequalities, but with different remainder terms. This is a very natural point of view when dealing with inequalities like~\eqref{Ineq:GNS} on the sphere, as shown for instance in~\cite{MR3229793} (see earlier references therein). In that case, linear flows are indeed limited to exponents $p\le2^\#$ if $d\ge2$. To overcome this difficulty if either $d=2$ and $p>2^\#$ or $d\ge3$ and $p\in(2^\#,2^*]$, one has to consider fast diffusion flows. Before explaining the results for the Gaussian measure, let us summarize the main known results on the sphere.

On $\S^d$, let us consider a positive solution $u$ of
\be{FDE}
\frac{\partial u}{\partial t}=u^{-p\,(1-m)}\(\Delta u+(m\,p-1)\,\frac{|\nabla u|^2}u\)
\ee
where $\Delta$ denotes the Laplace-Beltrami operator on $\S^d$. In the case $m=1$, $u^p$ solves the heat equation and for this reason, we shall call it the \emph{linear case}. Otherwise $u^p$ solves a nonlinear diffusion equation corresponding either to a fast diffusion flow with $m<1$ or to a porous media equation with $m>1$. In any case, we claim that
\[
\frac d{dt}\nrms up^2=0\quad\mbox{and}\quad\frac d{dt}\nrms u2^2=2\,(p-2)\isd{u^{-\,p\,(1-m)}\,|\nabla u|^2}\,.
\]
Let us define
\be{admissible.m}
m_\pm(d,p):=\frac1{(d+2)\,p}\(d\,p+2\pm\sqrt{d\,(p-1)\,\big(2\,d-(d-2)\,p\big)}\)\,.
\ee
The following result can be found in~\cite{MR2381156,MR3229793} with additional details in~\cite{MR3640894,MR4095470,BDS2022-Sphere}.
\begin{proposition}[\hspace*{-3pt}\cite{MR2381156,MR3229793}]\label{Prop:BE-Sphere} Assume that $d\ge1$, with either $p\in[1,2)\cup(2,+\infty)$ if $d=2$ or $p\in[1,2)\cup(2,2^*]$ if $d\ge3$, and let $m\in[m_-(d,p),m_+(d,p)]$. If $u>0$ solves~\eqref{FDE} with an initial datum in $\mathrm L^2\cap\mathrm L^p(\S^d,d\mu_d)$, then
\[
\frac d{dt}\(\nrmsd{\nabla u}2^2-\frac d{p-2}\(\nrmsd up^2-\nrmsd u2^2\)\)\le0\quad\forall\,t>0\,.
\]
\end{proposition}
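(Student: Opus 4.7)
The plan is to differentiate
\[
\mathcal J[u]:=\nrmsd{\nabla u}2^2-\frac d{p-2}\(\nrmsd up^2-\nrmsd u2^2\)
\]
along the flow~\eqref{FDE} and to rewrite the resulting integrand as a pointwise sum of squares whose coefficients are nonnegative precisely when $m\in[m_-(d,p),m_+(d,p)]$. Since $\nrmsd up^p$ is preserved, $\frac d{dt}\nrmsd up^2=0$, and the expression for $\frac d{dt}\nrmsd u2^2$ is the one stated above. For the Dirichlet term, integration by parts together with~\eqref{FDE} give
\[
\tfrac12\,\tfrac d{dt}\nrmsd{\nabla u}2^2 = -\isd{u^{-p(1-m)}\,\Delta u\,\Big(\Delta u+(m\,p-1)\,\tfrac{|\nabla u|^2}u\Big)}.
\]
Collecting these identities, $\frac d{dt}\mathcal J[u]=-\,2\,\mathcal I[u]$, where $\mathcal I[u]$ is an explicit linear combination of the three weighted integrals involving $u^{-p(1-m)}(\Delta u)^2$, $u^{-p(1-m)-1}\,|\nabla u|^2\,\Delta u$ and $u^{-p(1-m)}\,|\nabla u|^2$. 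The proposition thus reduces to showing $\mathcal I[u]\ge0$.

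To prove this, I would follow the nonlinear carr\'e du champ scheme of~\cite{MR2381156,MR3229793}. First, perform the change of unknown $u=f^\beta$ with a suitable exponent $\beta=\beta(m,p)$, chosen so as to cancel unwanted cross terms. Second, integrate by parts once more on the $|\nabla f|^2\,\Delta f$ term and invoke the Bochner-Lichnerowicz-Weitzenb\"ock identity on $\S^d$,
\[
\tfrac12\,\Delta|\nabla f|^2=\|\nabla^2 f\|^2+\nabla f\cdot\nabla\Delta f+(d-1)\,|\nabla f|^2\,,
\]
which converts $\nabla f\cdot\nabla\Delta f$ into Hessian and Ricci contributions. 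The Ricci term $(d-1)\,|\nabla f|^2$ is exactly what, after recombination, produces the factor $d$ appearing in front of $\nrmsd up^2-\nrmsd u2^2$ in $\mathcal J[u]$.

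Splitting $\nabla^2 f=\tfrac1d\,\Delta f\,g+\big(\nabla^2 f-\tfrac1d\,\Delta f\,g\big)$ into trace and trace-free parts, the integrand of $\mathcal I[u]$ can be put in the form
\[
\Big(\big\|\nabla^2 f-\tfrac{\Delta f}d\,g\big\|^2+\mathcal Q_{d,p,m}\big(\Delta f,\tfrac{|\nabla f|^2}f\big)\Big)\,w(f)\,,
\]
with $w(f)>0$ and $\mathcal Q_{d,p,m}$ a binary quadratic form whose coefficients are polynomial in $(d,p,m)$. A direct computation shows that its discriminant is, up to a positive factor, the quadratic polynomial in $m$ whose roots are $m_\pm(d,p)$ from~\eqref{admissible.m}. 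Hence $\mathcal Q_{d,p,m}\ge0$, and therefore $\mathcal I[u]\ge0$, if and only if $m\in[m_-(d,p),m_+(d,p)]$, which yields the proposition.

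The main obstacle is the algebraic bookkeeping: choosing $\beta$ so that $\mathcal Q_{d,p,m}$ takes a tractable form, tracking all terms generated by the two integrations by parts, and checking that the discriminant condition coincides with~\eqref{admissible.m} exactly. A secondary technical issue is the smoothness and strict positivity of $u(t,\cdot)$ needed to justify the integrations by parts, which is handled by a standard regularization and approximation argument on the compact manifold $\S^d$ (truncating the initial datum away from $0$ and passing to the limit in the resulting Bakry-Emery inequality).
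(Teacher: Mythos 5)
The paper does not actually prove Proposition~\ref{Prop:BE-Sphere}: it is quoted from~\cite{MR2381156,MR3229793}, and the closest computation in the paper is the proof of the Gaussian analogue, Theorem~\ref{Thm:BE-logConcave}, in Section~\ref{Sec:Nonlinear}. Your outline is precisely the nonlinear \emph{carr\'e du champ} argument of those references and runs parallel to that Gaussian proof: conservation of $\nrmsd up^p$, the stated derivatives of the $\mathrm L^2$ norm and of the Dirichlet term, a change of unknown $u=f^\beta$ chosen to make the resulting expression $2$-homogeneous, the integrated Bochner--Lichnerowicz--Weitzenb\"ock formula with $\mathrm{Ric}=(d-1)\,g$ on $\S^d$ (whose role is played, in the Gaussian case, by the commutator term giving $\mathrm{Hess}\,\phi\ge\lambda_\star\,\mathrm{Id}$), and a discriminant condition characterizing $m\in[m_-(d,p),m_+(d,p)]$. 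So the approach is the right one and matches the sources.

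One step is stated too loosely and, taken literally, would not deliver the sharp interval in~\eqref{admissible.m}. After the two integrations by parts the quantity to be shown non-negative contains the cross term $\isd{\mathrm{Hess}\,f:\frac{\nabla f\otimes\nabla f}f\,w(f)}$, which is not a function of $\Delta f$ and $|\nabla f|^2/f$; hence the integrand cannot be written pointwise as $\big\|\mathring{\mathrm{Hess}}\,f\big\|^2$ plus a binary quadratic form in these two scalars. The actual argument of Demange and of~\cite{MR3229793} completes the square in the trace-free sector, \emph{i.e.}\ uses $\big\|\mathring{\mathrm{Hess}}\,f-c\,\big(\frac{\nabla f\otimes\nabla f}f\big)^{\!\circ}\big\|^2\ge0$ together with the integral identity obtained from the trace/trace-free splitting, namely $\isd{\Delta f\,\frac{|\nabla f|^2}f}=\frac d{d+2}\big(\isd{\frac{|\nabla f|^4}{f^2}}-2\isd{\mathring{\mathrm{Hess}}\,f:\frac{\nabla f\otimes\nabla f}f}\big)$; it is exactly this handling of the cross term that produces the factor $d+2$ in $m_\pm(d,p)$, whereas a crude Cauchy--Schwarz estimate on it only reproduces the Bakry--\'Emery (linear-flow) range. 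Also note that the ``only if'' in your last sentence concerns the sign of the quadratic form, not of the deficit derivative (the converse statement is the separate result of~\cite{MR3640894}), but only the ``if'' direction is needed for the proposition. With the cross term treated as above, your scheme is the proof from the cited papers.
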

\noindent The limit, as $t\to+\infty$, of any solution of~\eqref{FDE} is a constant. This means that the \emph{deficit}, that is, the difference of the two sides in Inequality~\eqref{Ineq:GNS}, converges to $0$. Then, by Proposition~\ref{Prop:BE-Sphere}, it follows that the deficit is non-negative, which directly proves~\eqref{Ineq:GNS}. The same monotonicity property applies to the deficit of the \emph{logarithmic Sobolev inequality on the sphere}
\[
\nrmsd{\nabla u}2^2\ge\frac d2\isd{|u|^2\,\log\(\frac{|u|^2}{\nrms u2^2}\)}\quad\forall\,u\in\mathrm H^1(\S^d,d\mu_d)
\]
in the limit case corresponding to $p=2$. The admissible values of the parameters are limited to $m_-(d,p)\le m\le m_+(d,p)$ and $1\le p\le2^*$ if $d\ge3$. Moreover, at the endpoints, we have $m_\pm(d,1)=1$ and $m_\pm(d,2^*)=(d-1)/d$ if $d\ge3$, while $m_+(d,2^\#)=1$ so that $m=1$ is admissible if and only if $1\le p\le2^\#$ when $d\ge2$. See Fig.~\ref{F1}. For appropriate initial data, it is shown in~\cite{MR3640894} that the monotonicity property of the deficit along the flow of~\eqref{FDE} is violated for any $p\in[2,2^*)$ or $p=2^*$ if $d\ge3$ as soon as either $m<m_-(d,p)$ or $m>m_+(d,p)$. The case $p=2$ corresponding to the logarithmic Sobolev inequality is included.
\begin{figure}[ht]
\includegraphics[width=6cm]{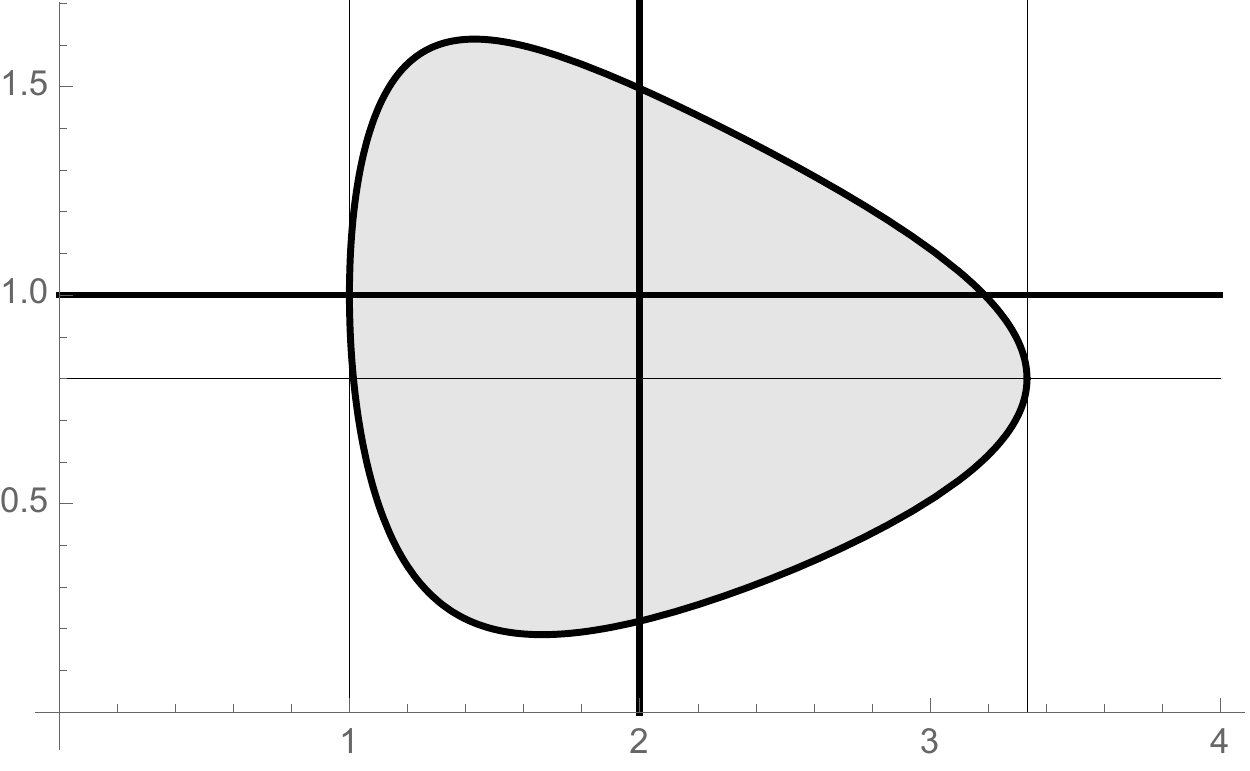}
\caption{\label{F1} Case $d=5$. The admissible parameters $p$ and $m$ correspond to the grey area. The boundary of the admissible set is tangent to the vertical lines $p=1$ at $(m,p)=(1,1)$ and $p=2^*=10/3$ at $(m,p)=(4/5,10/3)$. Qualitatively, this figure does not change as $d$ increases but gets squeezed in the interval $1\le p\le2$ as $d\to+\infty$.}
\end{figure}

In view of the results of Theorem~\ref{Thm:GNStoPLS}, it is natural to ask whether there is also a monotonicity property of the deficit associated to the Gaussian interpolation inequalities~\eqref{Ineq:PLS} when we rely on a nonlinear diffusion flow on $\R^n$. Let $m_\pm(p):=\lim_{d\to+\infty}m_\pm(d,p)$ where $m_\pm$ is given by~\eqref{admissible.m} and notice that
\be{mpm}
m_\pm(p)=1\pm\frac1p\,\sqrt{(p-1)\,(2-p)}\,.
\ee
The diffusion operator associated to the Gaussian measure is the \emph{Ornstein-Uhlenbeck operator} $\mathcal L=\Delta-x\cdot\nabla$ and we consider now the nonlinear parabolic equation
\be{FDEgamma}
\frac{\partial v}{\partial t}=v^{-p\,(1-m)}\(\mathcal Lv+(m\,p-1)\,\frac{|\nabla v|^2}v\)\,.
\ee
In the definition of $\mathcal L$, $\Delta$ denotes the standard Laplacian on $\R^n$. The following result is new for $m\neq1$ while the case $m=1$ follows from the method of the \emph{carr\'e du champ} developed by Bakry and Emery in~\cite{MR889476}.
\begin{theorem}\label{Thm:BE-Gaussian} Assume that $n\ge1$, $p\in[1,2)$. If $v>0$ solves~\eqref{FDEgamma} with $m\in[m_-(p),m_+(p)]$ for an initial datum in $\mathrm L^2\cap\mathrm L^p(\R^n,d\gamma)$, then
\[
\frac d{dt}\(\nrmng{\nabla v}2^2-\frac 1{p-2}\(\nrmng vp^2-\nrmng v2^2\)\)\le0\quad\forall\,t>0\,.
\]
\end{theorem}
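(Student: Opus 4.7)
The strategy is to imitate the \emph{carré du champ} argument underlying Proposition~\ref{Prop:BE-Sphere} in the Gaussian setting, replacing the curvature-dimension condition $CD(d-1,d)$ on $\S^d$ by the condition $CD(1,\infty)$ satisfied by the Ornstein-Uhlenbeck operator. As a first step, using the Gaussian integration by parts formula $\int f\,\mathcal Lg\,d\gamma=-\int\nabla f\cdot\nabla g\,d\gamma$, one checks that $\int v^p\,d\gamma$ is conserved along~\eqref{FDEgamma} (so that $\nrmng vp$ is constant in time), and that
\[
\tfrac12\,\frac d{dt}\irng{v^2}=(p-2)\irng{v^{-p\,(1-m)}\,|\nabla v|^2}\,,\quad \tfrac12\,\frac d{dt}\irng{|\nabla v|^2}=-\irng{v^{-p\,(1-m)}\,\Bigl[(\mathcal Lv)^2+(m\,p-1)\,\tfrac{(\mathcal Lv)\,|\nabla v|^2}v\Bigr]}\,.
\]
Combined, these identities reduce the monotonicity of the deficit of~\eqref{Ineq:PLS} to proving the non-negativity of
\[
\mathcal I[v]:=\irng{v^{-p\,(1-m)}\,\Bigl[(\mathcal Lv)^2+(m\,p-1)\,\tfrac{(\mathcal Lv)\,|\nabla v|^2}v-|\nabla v|^2\Bigr]}\,.
\]

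The second step is to invoke the Bochner identity for the Ornstein-Uhlenbeck operator, $\tfrac12\,\mathcal L\,|\nabla v|^2=|D^2v|^2+\nabla v\cdot\nabla(\mathcal Lv)+|\nabla v|^2$, in which the extra $+|\nabla v|^2$ is precisely the $CD(1,\infty)$ curvature contribution of the Gaussian measure. Multiplying by $v^{-p\,(1-m)}$, integrating against $d\gamma$, and repeatedly integrating by parts so as to transfer the derivatives from the weight onto the principal term converts $\mathcal I[v]$ into an integral, against $v^{-p\,(1-m)}d\gamma$, of a pointwise quadratic form $\mathcal Q$ in the three quantities $D^2v$, $\nabla v\otimes\nabla v/v$, and $|\nabla v|^2$, whose coefficients are explicit polynomials in $m$ and $p$ (with one cancellation producing the combination $p-1$ that governs the nonlinear character of the flow).

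The last step is the algebraic analysis of $\mathcal Q$. Introducing a pressure-like variable $P=v^{\beta}$ for a suitably chosen exponent $\beta=\beta(m,p)$, and decomposing $D^2v$ into its trace-free and trace parts, brings $\mathcal Q$ to a form that is manifestly a sum of squares plus a scalar $2\times2$ quadratic form in the two independent scalar variables $\mathcal Lv/v$ and $|\nabla v|^2/v^2$. Requiring the latter to be non-negative is a quadratic inequality in $m$ whose discriminant vanishes precisely at the two values $m_\pm(p)$ of~\eqref{mpm}, consistent with the expected identity $m_\pm(p)=\lim_{d\to+\infty}m_\pm(d,p)$ issued from~\eqref{admissible.m}. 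I expect the main obstacle to lie in this algebraic matching: one must align the exponents so that the $CD(1,\infty)$ curvature term and the nonlinear contribution $(m\,p-1)$ combine to produce exactly the tight range $[m_-(p),m_+(p)]$ rather than a strictly smaller interval. A secondary analytic point, handled along the lines of the references cited just before Proposition~\ref{Prop:BE-Sphere}, is a standard approximation argument (truncation of $v$ from above and below, together with regularization of the initial datum) that legitimates the repeated integrations by parts when $m<1$ and the weight $v^{-p\,(1-m)}$ is singular.
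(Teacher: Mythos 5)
Your overall route is the paper's: differentiate the deficit along the nonlinear flow, use the conservation of $\nrmng vp$ together with the two identities of your first step (both are correct), and reduce the theorem to the non-negativity of the functional $\mathcal I[v]$, which is then attacked with the $CD(1,\infty)$ Bochner formula for the Ornstein--Uhlenbeck operator. The paper does exactly this (in the more general setting of Theorem~\ref{Thm:BE-logConcave}, with $d\gamma$ replaced by a uniformly log-concave measure), after the homogenizing change of unknown $w=v^{1/\beta}$ with $\beta=2/(2-p\,(1-m))$ as in~\eqref{kappa-beta}, which removes the weight $v^{-p\,(1-m)}$ and reduces everything to the non-negativity of
\[
\mathcal Q_\beta[w]=\irng{\|\mathrm{Hess}\,w\|^2}-2\,(\kappa+\beta-1)\irng{\mathrm{Hess}\,w:\tfrac{\nabla w\otimes\nabla w}w}+\big(\kappa\,(\beta-1)+\kappa+\beta-1\big)\irng{\tfrac{|\nabla w|^4}{w^2}}\,.
\]

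The genuine gap is the step you yourself flag as the main obstacle: the proposal never verifies that the resulting quadratic form is non-negative precisely when $m\in[m_-(p),m_+(p)]$, and the mechanism you sketch would not deliver this. Splitting $D^2v$ into trace and trace-free parts so as to land on a scalar $2\times2$ form in $\mathcal Lv/v$ and $|\nabla v|^2/v^2$ is the device suited to the finite-dimensional condition $CD(d-1,d)$ on $\S^d$; in the Gaussian ($CD(1,\infty)$) setting it forces you through $\|\mathrm{Hess}\,v\|^2\ge(\Delta v)^2/n$ (or through the trace-free part of $\nabla v\otimes\nabla v$), which introduces the dimension $n$ and cannot reproduce the dimension-free sharp interval. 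What works is to complete the square in the full matrix variable: $\mathcal Q_\beta[w]=\irng{\big\|\mathrm{Hess}\,w-(\kappa+\beta-1)\,\tfrac{\nabla w\otimes\nabla w}w\big\|^2}+\delta\irng{\tfrac{|\nabla w|^4}{w^2}}$ with $\delta=\kappa\,(\beta-1)+\kappa+\beta-1-(\kappa+\beta-1)^2=\big(\beta-\beta_-(p)\big)\,\big(\beta_+(p)-\beta\big)$, so the leftover is a single square in the scalar $|\nabla w|^2/w$ and the discriminant condition is exactly $\beta\in[\beta_-(p),\beta_+(p)]$, i.e.\ $m\in[m_-(p),m_+(p)]$ via~\eqref{kappa-beta} and~\eqref{mpm}. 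Once this is in place, your closing remark on the integrations by parts is fine in spirit; the paper truncates to the convex level sets $\{\phi<h\}$ with no-flux boundary conditions and invokes Grisvard's lemma rather than truncating $v$, but either regularization is standard.
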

\noindent The limiting case $p=2$ corresponding to the Gaussian logarithmic Sobolev inequality is also covered but it is obtained as a standard application of the linear \emph{carr\'e du champ} method known from~\cite{MR889476} because $m_\pm(2)=1$. See Fig.~\ref{F2}.
\begin{figure}[ht]
\includegraphics[width=6cm]{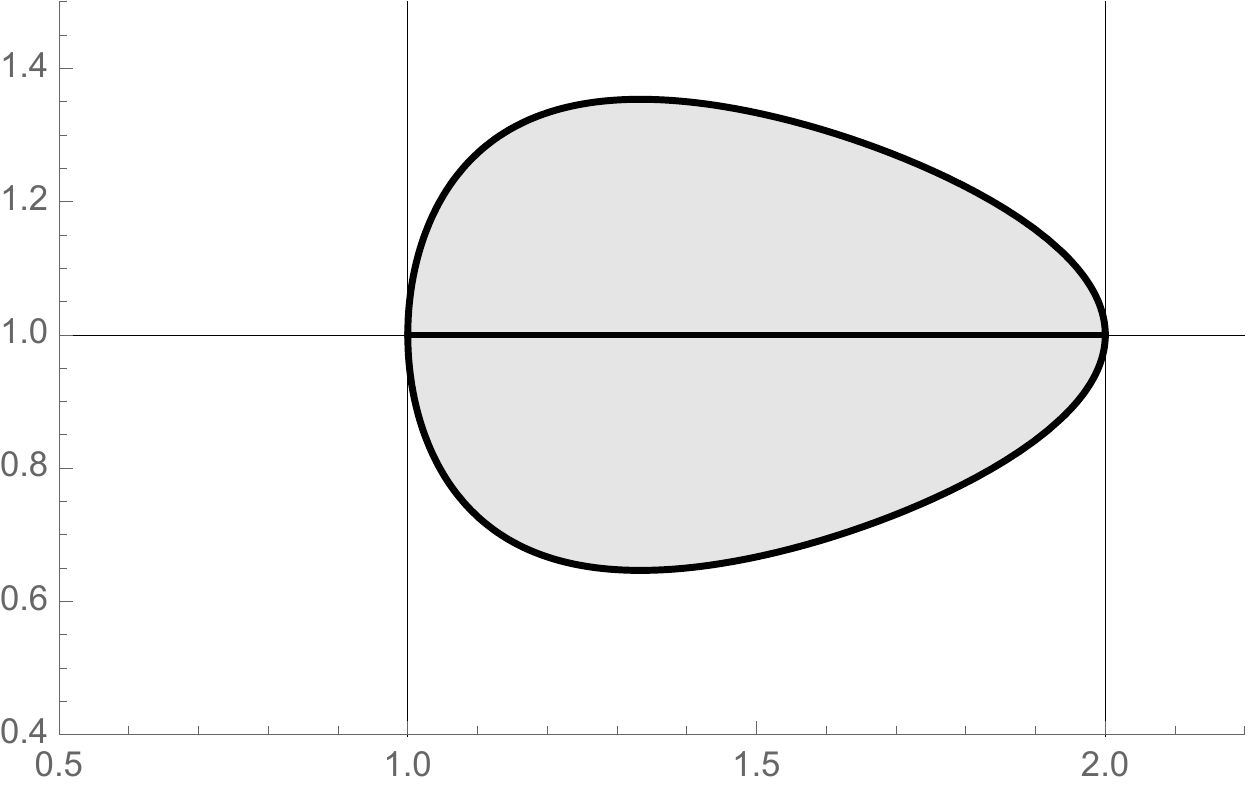}
\caption{\label{F2} The admissible parameters $p$ and $m$ correspond to the grey area and are independent of the dimension $n$. The boundary of the admissible set is tangent to the vertical lines $p=1$ at $(m,p)=(1,1)$ and $p=2$ at $(m,p)=(1,2)$. It is the limit set of the admissible parameters for Proposition~\ref{Prop:BE-Sphere} as $d\to+\infty$.}
\end{figure}

Said in simple words, the result of Theorem~\ref{Thm:BE-Gaussian} is that the admissible range of exponents of the nonlinear flow, for which the deficit associated to~\eqref{Ineq:PLS} is monotone non-increasing, is obtained as the limit of the range of the corresponding exponents on the sphere, in the large dimensions limit. Moreover, $p=2$ appears as a \emph{critical exponent} for the Gaussian measure.

\medskip Let us now focus on stability results. The main result of~\cite{BDS2022-Sphere} for the sphere is a constructive stability estimate for Inequality~\eqref{Ineq:GNS}, limited to the subcritical range $p\in(1,2^*)$, which measures the distance to optimal functions, and distinguishes the subspaces generated by constant functions, spherical harmonic functions associated to the first positive eigenvalue of the Laplace-Beltrami operator, and the orthogonal directions. Optimal exponents in the stability estimate measuring the distance to the set of optimal functions differ, depending on the directions. Here we have the exact counterpart in the Gaussian case. Let~$\Pi_1$ denote the orthogonal projection of $\mathrm L^2(\R^n,d\gamma)$ onto the $(n+1)$-dimensional function space generated by $1$ and $x_i$ with $i=1$, $2$,\,\ldots,$n$.
\begin{theorem}\label{stability-subcritical-Gaussian}
For all $n\ge 1$, and all $p\in(1,2),$ there is an explicit constant $c_{n,p}>0$ such that, for all $v \in \mathrm{H}^1(d\gamma)$, it holds 
\begin{multline*}
\nrmng{\nabla v}2^2-\frac 1{p-2}\(\nrmng vp^2-\nrmng v2^2\)\\
\ge\,c_{n,p}\(\nrmng{\nabla(\mathrm{Id}-\Pi_1)v}2^2+\frac{\nrmng{\nabla\Pi_1v}2^4}{\nrmng{\nabla v}2^2+\nrmng v2^2}\).
\end{multline*}
\end{theorem}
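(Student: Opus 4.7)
The plan is to decompose $v=\Pi_1v+w$ with $w:=(\mathrm{Id}-\Pi_1)v$ and to analyze the two components separately, as in the sphere case of~\cite{BDS2022-Sphere}. Writing $\Pi_1v=a+b\cdot x$ with $(a,b)\in\R\times\R^n$, the orthogonality of Hermite polynomials of degree at most one in the Gaussian inner product gives
\[
\nrmng v2^2=a^2+|b|^2+\nrmng w2^2,\qquad\nrmng{\nabla v}2^2=|b|^2+\nrmng{\nabla w}2^2,
\]
so that $\nrmng{\nabla\Pi_1v}2^2=|b|^2$. The two target contributions on the right-hand side of the claim therefore read $\nrmng{\nabla w}2^2$ and $|b|^4/D$, where $D:=\nrmng{\nabla v}2^2+\nrmng v2^2$.

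For the orthogonal component $w$, the Ornstein--Uhlenbeck operator restricted to $(\mathrm{Id}-\Pi_1)\mathrm L^2(\R^n,d\gamma)$ has spectral gap at least $2$, which yields the improved Gaussian Poincar\'e inequality $\nrmng w2^2\le\tfrac12\,\nrmng{\nabla w}2^2$. Injecting this into the carr\'e du champ identity underlying Theorem~\ref{Thm:BE-Gaussian}, or equivalently, tracking the excess positive contributions along the flow~\eqref{FDEgamma}, produces a deficit lower bound of the form $c_1\,\nrmng{\nabla w}2^2$ with a constant depending only on $p$. For the $\Pi_1v$ contribution, one expands $\irng{(a+b\cdot x)^p}$ via the Gaussian generating function as a series in $|b|^2/a^2$; the quadratic term of the resulting deficit vanishes because affine functions saturate the Gaussian Poincar\'e inequality, the cubic term vanishes by the parity $x\mapsto-x$, and the leading quartic term equals $\tfrac{p-1}{2}\,|b|^4/a^2$. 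A scale-invariance argument, combined with a direct analysis of the one-parameter deficit $t\mapsto\irng{(1+t\,\xi\cdot x)^p}$, extends this local lower bound to the global estimate $\ge c_2\,|b|^4/D$.

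The main obstacle is the gluing step: merging the $w$-estimate and the $\Pi_1v$-estimate into a single inequality with an explicit $c_{n,p}$, while controlling the cross interactions between $\Pi_1v$ and $w$ inside the nonlinear $\mathrm L^p$ norm. Following~\cite{BDS2022-Sphere}, I would split into two regimes according to whether $\nrmng{\nabla w}2^2$ dominates the $\Pi_1v$-contribution or not: in the first regime, the orthogonal estimate closes the argument; in the second, $w$ is small in Sobolev norm compared to $\Pi_1v$ and a Taylor expansion of the deficit around $\Pi_1v$, combined with the improved Poincar\'e inequality for $w$, carries the argument through. The delicate point is the quartic scaling in the $\Pi_1$ direction, which rules out a direct linearization and forces one to track fourth-order expansions uniformly in $(a,b)$ and in $w$.
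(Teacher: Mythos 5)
Your overall architecture (decompose along $\Pi_1$, split into regimes, fourth-order Taylor expansion in the affine direction) matches the paper's strategy, but two of the steps you treat as available are precisely where the real work lies, and as stated they do not go through. First, the bound ``deficit $\ge c_1\,\nrmng{\nabla w}2^2$'' for the orthogonal component cannot be obtained by ``injecting'' the restricted spectral gap $2$ into the \emph{carr\'e du champ} identity behind Theorem~\ref{Thm:BE-Gaussian}: the orthogonality to the degree-one Hermite polynomials is not preserved along the nonlinear flow~\eqref{FDEgamma}, and, more to the point, what is needed is a lower bound on the deficit of the \emph{full} function $v$ (with the cross terms of $\Pi_1v$ and $w$ inside the nonlinear $\mathrm L^p$ norm) expressed through $\nrmng{\nabla(\mathrm{Id}-\Pi_1)v}2^2$, not an improved Poincar\'e inequality for $w$ alone. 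The paper gets exactly this statement from Proposition~\ref{orth}, whose proof rests on Nelson's hypercontractivity (via \cite{BDS2022-Sphere}), not on the flow; your route for this key lemma is unsubstantiated. Relatedly, your regime split omits the genuinely ``far from constants'' case, which in the paper is settled by the improved entropy--entropy production inequality of Corollary~\ref{Cor:4thOrder} (Lemma~\ref{Lem:Stab1}); a direct analysis of $t\mapsto\irng{(1+t\,\xi\cdot x)^p}$ only covers purely affine functions.

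Second, in the regime where the orthogonal part is small ($\eta\le t\,\varepsilon^2$ in the paper's notation), the leading contribution in the affine direction is only of order $\varepsilon^4$, so every remainder in the expansion of $\irng{|1+\varepsilon\,x_1+\eta\,r|^p}$ must be shown to be $o(\varepsilon^4)$ \emph{uniformly} in $r$, knowing only $\nrmng{\nabla r}2=1$ and the orthogonality constraints. This is the heart of the proof and your proposal offers no mechanism for it: dominated convergence gives the limit for each fixed $r$ but not uniformity, and terms such as $\eta^2\irneps{r^2\,\psi_{\varepsilon,\eta,r}}$ are not controlled by $\mathrm L^2$ bounds alone. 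The paper's Lemma~\ref{Lem:Stab4} handles this by truncating to the ball $B_\varepsilon$, splitting according to the size of $\eta\,r$, and invoking the Gaussian logarithmic Sobolev inequality to bound $\irng{r^2\log r^2}$, which yields an error of order $\varepsilon^4/\log(1/\varepsilon)$ that can then be absorbed by the positive quartic term (after a Young inequality to handle the cross term $\varepsilon^2\,\eta\irng{x_1^2\,r}$). Without an ingredient of this kind, your ``Taylor expansion of the deficit around $\Pi_1v$'' does not close, so the proposal has a genuine gap at both of its load-bearing steps.
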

\noindent Exponents $2$ and $4$ which appear in the right-hand side of the inequality are sharp and the constant $c_{n,p}$ has an explicit although complicated expression given in the proof. If $p=1$, the last term drops and the distance to optimal functions is measured only by
\[
\nrmng{\nabla(\mathrm{Id}-\Pi_1)v}2^2\,,
\]
and a decomposition on Hermite polynomials shows that the estimate cannot be improved. If $p=2^*$ and $n\ge3$, the recent stability result on Sobolev's inequality on $\R^n$ of~\cite{DEFFL}, which quantifies the estimate of Bianchi and Egnell in~\cite{MR1124290} can be translated into a stability result for~\eqref{Ineq:GNS} on~$\S^d$, that can be recast in the form of~\cite[Theorem 6]{BDS2022-Sphere}. By a large dimension argument, a stability on the Gaussian logarithmic Sobolev inequality is also shown in~\cite{DEFFL}, although the distance is measured only by an $\mathrm L^2(\R^n,d\gamma)$ norm. Whether a stronger estimate can be obtained in the limiting case $p=2$, eventually under some restriction, is therefore so far an open question.

\medskip This paper is organized as follows. In Section~\ref{Sec:LargeDimensions}, we give a new proof of Inequality~\eqref{Ineq:PLS} as a consequence of Inequality~\eqref{Ineq:GNS} by taking a large dimensions limit, applied to the inequality written for a function depending only on a fixed number $n$ of real variables. To our knowledge, this is new except for the limit case $p=2$ of the logarithmic Sobolev inequality. Section~\ref{Sec:Nonlinear} is devoted to the proof of Theorem~\ref{Thm:BE-Gaussian}: we characterize the nonlinear diffusion flows of porous medium or fast diffusion type such that the deficit is monotone non-increasing and recover the picture known on the sphere in the large dimensions limit. Moreover, by the \emph{carr\'e du champ} method, we establish improved inequalities that provide us with first stability results. The stability result of Theorem~\ref{stability-subcritical-Gaussian} for the Gaussian measure is proved in Section~\ref{Sec:Stability} using a detailed Taylor expansion and the improved inequalities of Section~\ref{Sec:Nonlinear}.

\section{From subcritical interpolation inequalities on the sphere to Gaussian interpolation}\label{Sec:LargeDimensions}

In this section we explain how Inequality~\eqref{Ineq:PLS} can be seen as the limit of Inequality~\eqref{Ineq:GNS} in the large dimensions limit, that is, as $d\to+\infty$, and prove Theorem~\ref{Thm:GNStoPLS}. Comments on the limit case $p=2$ can be found at the end of this section.

The unit sphere $\S^d$ is parametrized in terms of the \emph{stereographic coordinates} by
\[
\omega_j=\frac{2\,x_j}{1+|x|^2}\quad\mbox{if}\quad1\le j\le d\quad\mbox{and}\quad\omega_{d+1}=\frac{1-|x|^2}{1+|x|^2}
\]
where $\omega=(\omega_1,\omega_2,\ldots,\omega_{d+1})$ denote the coordinates in $\R^{d+1}\supset\S^d$ and $x=(x_1,x_2,\ldots,x_d)$ are Cartesian coordinates in $\R^d$. To a function $u$ on $\S^d$, we associate a function $w$ on $\R^d$ using the stereographic projection such that
\[
\(\frac2{1+|x|^2}\)^\frac{d-2}2u(\omega)=w(x)\quad\forall\,x\in\R^d\,.
\]
It is a standard result that
\be{up}
\isd{|u|^p}=2^\frac{\delta(p)}2\,\big|\S^d\big|^{-1}\ird{\cb x^{-\delta(p)}\,|w|^p}
\ee
and that
\[
\isd{|\nabla u|^2}+\tfrac14\,d\,(d-2)\isd{|u|^2}=\big|\S^d\big|^{-1}\ird{|\nabla w|^2}\,.
\]
where $\cb x:=\sqrt{1+|x|^2}$ and $\delta(p):=2\,d-p\,(d-2)$. Using the stereographic projection, Inequality~\eqref{Ineq:GNS} can be written on the Euclidean space $\R^d$ as the weighted interpolation inequality
\be{Ineq:wGNS}
\ird{|\nabla w|^2}+\frac{d\,\delta(p)}{p-2}\ird{\frac{|w|^2}{\cb x^4}}\ge\frac{\mathcal C_{d,p}}{p-2}\(\ird{\frac{|w|^p}{\cb x^{\delta(p)}}}\)^\frac2p\quad\mbox{with}\quad\mathcal C_{d,p}=2^\frac{\delta(p)}p\,d\,\big|\S^d\big|^{1-\frac2p}.
\ee
 See~\cite{MR4095470} for details. Equality is achieved by the Aubin-Talenti function $w_\star(x):=\cb x^{2-d}$. Assume that $d\ge4$. Let us consider $f=w/w_\star$ and notice that the inequality rewritten in terms of $f$ is
\[
\ird{|\nabla f|^2\,w_\star^2}+\frac{4\,d}{p-2}\ird{|f|^2\,w_\star^{2^*}}\ge\frac{\mathcal C_{d,p}}{p-2}\(\ird{|f|^p\,w_\star^{2^*}}\)^\frac2p\,.
\]
Since we are interested in the limit as $d\to+\infty$, we shall consider the case $1\le p<2$, which is admissible for any $d\ge2$.
 
With $g(x)=f\big(x/\sqrt d\big)$, we obtain after changing variables that
\[
\frac14\int_{\R^d}|\nabla g|^2\,\frac{dx}{\(1+\tf\,|x|^2\)^{d-2}}+\frac1{p-2}\int_{\R^d}|g|^2\,\frac{dx}{\(1+\tf\,|x|^2\)^d}\ge\frac{\mathcal C_{d,p}\,d^{d\frac{p-2}{2\,p}}}{4\,d\,(p-2)}\(\int_{\R^d}|g|^p\,\frac{dx}{\(1+\tf\,|x|^2\)^d}\)^\frac2p\,.
\]
Let us assume that $n\ge1$ is a given integer and take $d>\max\{n,3\}$. With $x=(y,z)\in\R^n\times\R^{d-n}\approx\R^d$, we also assume that the function $g$ depends only on $y$. In other words, we write $g=g_d$ where $g_d(y,z)=v(y)$ for some function $v$ defined on $\R^n$, that is, 
\be{Marginals}
g_d(y,z)=v(y)\quad\forall\,(y,z)\in\R^n\times\R^{d-n}
\ee
Here we use a subscript $d$ in order to emphasize that $g_d$ has to be considered as a function on $\R^d$. Let us define
\[
c_d:=(d\,\pi)^\frac d2\,\frac{\Gamma\(d/2\)}{\Gamma(d)}\,.
\]
\begin{lemma}\label{Lem:WGNS-PLS} Let $n$ be a positive integer, $p\in[1,2)$, consider a function $v\in\mathrm H^1(\R^n,dx)$ with compact support and define $g_d$ according to~\eqref{Marginals}. Then we have
\begin{multline*}
\lim_{d\to+\infty}\frac1{4\,c_d}\int_{\R^d}|\nabla g_d|^2\,\frac{dx}{\(1+\tf\,|x|^2\)^{d-2}}\\
+\frac1{2-p}\,\lim_{d\to+\infty}
\frac1{c_d}\(\frac{\mathcal C_{d,p}\,d^{d\frac{p-2}{2\,p}}}{4\,d}\(\int_{\R^d}|g_d|^p\,\frac{dx}{\(1+\tf\,|x|^2\)^d}\)^\frac2p-\int_{\R^d}|g_d|^2\,\frac{dx}{\(1+\tf\,|x|^2\)^d}\)\\
=\nrmng{\nabla v}2^2-\frac1{2-p}\(\nrmng v2^2-\nrmng vp^2\)\,.
\end{multline*}
\end{lemma}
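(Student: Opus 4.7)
My plan is to use Fubini to isolate the $z$-dependence and then evaluate the $z$-integrals in closed form via a beta integral. Since $g_d(y,z)=v(y)$ is independent of $z\in\R^{d-n}$, for any $q\ge1$ and any $\alpha>(d-n)/2$,
\[
\int_{\R^d}|g_d|^q\,\frac{dx}{\(1+\tf\,|x|^2\)^{\alpha}}=\int_{\R^n}|v(y)|^q\,\varphi^d_\alpha(y)\,dy\,,
\]
where $\varphi^d_\alpha(y):=\int_{\R^{d-n}}(1+(|y|^2+|z|^2)/d)^{-\alpha}\,dz$. Passing to polar coordinates in $z$ and using the substitution $r=s\sqrt{d+|y|^2}$ reduces $\varphi^d_\alpha(y)$ to $|\S^{d-n-1}|$ times a standard beta integral, yielding
\[
\varphi^d_\alpha(y)=\frac{\pi^{(d-n)/2}\,d^{(d-n)/2}\,\Gamma\bigl(\alpha-(d-n)/2\bigr)}{\Gamma(\alpha)\,\(1+|y|^2/d\)^{\alpha-(d-n)/2}}\,.
\]
Only $\alpha=d$ and $\alpha=d-2$ appear in the lemma.

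Next I would identify the asymptotics of $\varphi^d_\alpha(y)/c_d$ on the compact support of $v$. For $\alpha=d$, Stirling's formula gives $\Gamma((d+n)/2)/\Gamma(d/2)\sim(d/2)^{n/2}$, which combined with $\(1+|y|^2/d\)^{-(d+n)/2}\to e^{-|y|^2/2}$ yields $\varphi^d_d(y)/c_d\to(2\pi)^{-n/2}e^{-|y|^2/2}$. For $\alpha=d-2$, the extra factor $\Gamma(d)/\Gamma(d-2)=(d-1)(d-2)$ combines with $\Gamma((d+n-4)/2)/\Gamma(d/2)\sim(d/2)^{(n-4)/2}$ to contribute a limiting factor $4$, so $\varphi^d_{d-2}(y)/c_d\to 4\,(2\pi)^{-n/2}e^{-|y|^2/2}$. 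Because $v$ and $\nabla v$ are compactly supported and the weights $\(1+|y|^2/d\)^{-\beta}$ with $\beta>0$ are uniformly bounded by $1$ on the relevant supports, dominated convergence gives
\[
\frac{1}{c_d}\int_{\R^n}|v|^q\,\varphi^d_d\,dy\longrightarrow\nrmng{v}{q}^q\quad(q\in\{2,p\})
\]
and
\[
\frac{1}{4\,c_d}\int_{\R^n}|\nabla v|^2\,\varphi^d_{d-2}\,dy\longrightarrow\nrmng{\nabla v}{2}^2\,.
\]

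The heart of the matter is the constant in front of the $\mathrm L^p$-integral. Using $\delta(p)=(2-p)d+2p$ and $|\S^d|=2\pi^{(d+1)/2}/\Gamma((d+1)/2)$, the prefactor
\[
K_d:=\frac{\mathcal C_{d,p}\,d^{d(p-2)/(2p)}}{4\,d}
\]
simplifies to $K_d=(4/d)^{(2-p)d/(2p)}\,|\S^d|^{(p-2)/p}$. Legendre's duplication formula $\Gamma(d/2)\,\Gamma((d+1)/2)=2^{1-d}\sqrt\pi\,\Gamma(d)$ produces $\Gamma(d/2)/(\Gamma(d)\,|\S^d|)=(4\pi)^{-d/2}$, and a direct exponent count then gives the exact identity
\[
K_d\,c_d^{(2-p)/p}=1\qquad\forall\,d\ge 2\,.
\]
Combined with the previous step, this turns $\frac{K_d}{c_d}\(\int|v|^p\varphi^d_d\,dy\)^{2/p}$ into $\(\frac{1}{c_d}\int|v|^p\varphi^d_d\,dy\)^{2/p}\to\nrmng{v}{p}^2$.

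Putting everything together, the first limit equals $\nrmng{\nabla v}{2}^2$ while the bracket in the second converges to $\nrmng{v}{p}^2-\nrmng{v}{2}^2$; after multiplication by $1/(2-p)$ one recovers exactly the right-hand side of the lemma. The main obstacle is the exact identity $K_d\,c_d^{(2-p)/p}=1$: without it one faces an $\infty-\infty$ cancellation between $K_d\,(\int|g_d|^p\cdots)^{2/p}$ and $\int|g_d|^2\cdots$ divided by $c_d$, which only closes thanks to the precise combinatorics that Legendre's duplication formula encodes in a single line.
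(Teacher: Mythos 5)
Your proposal is correct and follows essentially the same route as the paper: integrate out the $z$-variables via a Gamma/beta evaluation, identify the Gaussian weight from $(1+|y|^2/d)^{-(d+n)/2}\to e^{-|y|^2/2}$ together with Stirling's formula, and conclude by dominated convergence on the compact support of $v$. The only (pleasant) variation is that you handle the prefactor of the $\mathrm L^p$-term through the exact identity $K_d\,c_d^{(2-p)/p}=1$ obtained from Legendre's duplication formula, whereas the paper only evaluates this constant asymptotically as $d\to+\infty$; both treatments yield the same limit.
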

\noindent In other words, we prove that the infinite dimensional limit of~\eqref{Ineq:wGNS}, for functions depending only on a finite number $n$ of real variables, is~\eqref{Ineq:PLS}. The assumption of compact support can be removed if~$v$ and $\nabla v$ have sufficient decay properties at infinity.

\begin{proof} Using
\[
\(1+\tf\,|x|^2\)^{2-d}=\Big(1+\tf\(|y|^2+|z|^2\)\Big)^{2-d}=\(1+\tf\,|y|^2\)^{2-d}\(1+\tf\,|\zeta|^2\)^{2-d}\quad\mbox{with}\quad\zeta=\frac z{\sqrt{1+\tf\,|y|^2}}\,,
\]
we can integrate with respect to $z$ and obtain
\[
\int_{\R^{d-n}}\(1+\tf\,|x|^2\)^{2-d}\,dz=\(1+\tf\,|y|^2\)^{2-\frac{d+n}2}\int_{\R^{d-n}}\(1+\tf\,|\zeta|^2\)^{2-d}\,d\zeta\,.
\]
Let $d\gamma(y):=(2\,\pi)^{-n/2}\,e^{-\frac12\,|y|^2}\,dy$ be the centred normalized Gaussian probability measure. We recall that $|\S^{k-1}|=2\,\pi^{k/2}/\Gamma(k/2)$ for any $k\in\N\setminus\{0\}$ and
\[
\int_0^{+\infty}r^{a-1}\(1+\tf\,r^2\)^{-b}\,dr=d^\frac a2\,\frac{\Gamma\(\frac a2\)\,\Gamma\(b-\frac a2\)}{2\,\Gamma(b)}
\]
if $0<a<2\,b$. Applying these formulas with $a=k=d-n$ and $b=d-2>2-n$, we find that
\[
\int_{\R^{d-n}}\(1+\tf\,|\zeta|^2\)^{2-d}\,d\zeta=(d\,\pi)^\frac{d-n}2\,\frac{\Gamma\(\frac{d+n}2-2\)}{\Gamma(d-2)}\,.
\]
Applying these formulas with $a=b=k=d\ge2$, we find that
\[
\ird{\(1+\tf\,|x|^2\)^{-d}}=c_d
\]
and, as a consequence
\[
\lim_{d\to+\infty}\frac1{c_d}\int_{\R^{d-n}}\(1+\tf\,|\zeta|^2\)^{2-d}\,d\zeta=\frac 4{(2\,\pi)^{n/2}}
\]
using Stirling's formula. Since
\[
\lim_{d\to+\infty}\(1+\tf\,|y|^2\)^{2-\frac{d+n}2}=e^{-\frac12\,|y|^2}\,,
\]
we obtain
\[
\lim_{d\to+\infty}\frac1{c_d}\ird{|\nabla g_d(y)|^2\(1+\tf\,|x|^2\)^{2-d}}=4\irng{|\nabla v|^2}\,.
\]
Similar computations show that
\begin{multline*}
\ird{|g_d(y)|^2\(1+\tf\,|x|^2\)^{-d}}=\irn{|g_d(y)|^2\(1+\tf\,|y|^2\)^{-\frac{d+n}2}}\int_{\R^{d-n}}\(1+\tf\,|\zeta|^2\)^{-d}\,d\zeta\,,\\
\lim_{d\to+\infty}\frac1{c_d}\ird{|g_d(y)|^2\(1+\tf\,|x|^2\)^{-d}}=\irng{|v|^2}\,,
\end{multline*}
and
\begin{multline*}
\ird{|g_d(y)|^p\(1+\tf\,|x|^2\)^{-d}}=\irn{|g_d(y)|^p\(1+\tf\,|y|^2\)^{-\frac{d+n}2}}\int_{\R^{d-n}}\(1+\tf\,|\zeta|^2\)^{-d}\,d\zeta\,,\\
\lim_{d\to+\infty}\frac{\mathcal C_{d,p}\,d^{d\frac{p-2}{2\,p}}}{4\,d\,c_d}\(\ird{|g_d(y)|^p\(1+\tf\,|x|^2\)^{-d}}\)^\frac2p=\(\irng{|v|^p}\)^\frac2p\,.
\end{multline*}
This completes the proof of Lemma~\ref{Lem:WGNS-PLS}.\end{proof}

\begin{proof}[Proof of Theorem~\ref{Thm:GNStoPLS}]

Applied to the function $u_d$, Inequality~\eqref{Ineq:GNS} is transformed into Inequality~\eqref{Ineq:wGNS} applied to
\[
g_d(x)=u_d\(\frac{2\,y}{1+\tf\,|x|^2}\)\quad\forall\,x=(y,z)\in\R^n\times\R^{d-n}\,.
\]
Notice that the factor $2^{\delta(p)/2}$, which arises from the stereographic projection and appears in~\eqref{up}, plays a role in the computation of the constant $\mathcal C_{d,p}$ and is taken into account in the limit as $d\to+\infty$. Since the right-hand side uniformly converges to $v(y)$ for any smooth and compactly supported function $v$, the same conclusion holds for Theorem~\ref{Thm:GNStoPLS} as for Lemma~\ref{Lem:WGNS-PLS}.
\end{proof}

\medskip It is a natural question to ask what happens in~\eqref{Ineq:GNS} to the marginals depending only on a finite number $n$ of variables if $p=2$ or in the case $2<p\le2^*=2\,d/(d-2)$. We may notice that $\lim_{d\to+\infty}2\,d/(d-2)=2$ and it is known, for instance from~\cite{MR1164616}, that one recovers the \emph{Gaussian logarithmic Sobolev inequality} as a limit case of Sobolev's inequality on $\S^d$ corresponding to $p=2^*$ when $d\to+\infty$. This is also true if we consider a sequence $(p_d)_{d\in\N}$ with $1<p_d<2^*$, depending on $d$, if its limit is also $2$, as shown next. By convention, when $p_d=2$, we consider the Gaussian logarithmic Sobolev inequality instead of~\eqref{Ineq:PLS}.
\begin{proposition}\label{Prop:GNStoLSI} Let $n$ be a positive integer and consider a function $v\in\mathrm H^1(\R^n,dx)$ with compact support. For any $d\ge n$, large enough, let $u_d(\omega)=v\big(\omega_1/\sqrt d,\omega_2/\sqrt d,\ldots,\omega_n/\sqrt d\big)$ where $\omega=(\omega_1,\omega_2,\ldots,\omega_d)\in\R^{d+1}\supset\S^d$ is such that $|\omega|=1$, as in Theorem~\ref{Thm:GNStoPLS}. Then we have
\begin{multline*}
\lim_{d\to+\infty}d\(\nrmsd{\nabla u_d}2^2-\frac d2\isd{|u_d|^2\,\log\(\frac{|u_d|^2}{\nrms{u_d}2^2}\)}\)\\
=\nrmng{\nabla v}2^2-\frac12\irng{|v|^2\,\log\(\frac{|v|^2}{\nrmng v2^2}\)}\,.
\end{multline*}
If $(p_d)_{d\ge3}$ is a sequence of real numbers such that $p_d\in(1,2)\cup(2,2^*)$ and $\lim_{d\to+\infty}p_d=2$, then
\begin{multline*}
\lim_{d\to+\infty}d\(\nrmsd{\nabla u_d}2^2-\frac d{2-p_d}\(\nrmsd{u_d}2^2-\nrmsd{u_d}{p_d}^2\)\)\\
=\nrmng{\nabla v}2^2-\frac12\irng{|v|^2\,\log\(\frac{|v|^2}{\nrmng v2^2}\)}\,.
\end{multline*}
\end{proposition}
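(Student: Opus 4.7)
The plan is to extend Theorem~\ref{Thm:GNStoPLS} and Lemma~\ref{Lem:WGNS-PLS} to the limit $p\to 2$, in two stages matching the two statements of the proposition.

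First I would handle the $p=2$ statement by reproducing the argument of Lemma~\ref{Lem:WGNS-PLS} with the logarithmic Sobolev inequality on $\S^d$ (itself obtained as the $p\to 2_-$ limit of~\eqref{Ineq:GNS}) in place of~\eqref{Ineq:GNS}. Stereographic projection, together with the conformal factor identity used to derive~\eqref{Ineq:wGNS}, transforms the spherical entropy $\isd{|u|^2\log\(|u|^2/\nrmsd u2^2\)}$ into an entropy integral on $\R^d$ with respect to the weight $\cb x^{-d}$, plus a correction from the conformal factor that is of lower order in~$d$. Applying the rescaling $g(x)=f(x/\sqrt d)$ and the marginal ansatz~\eqref{Marginals}, the asymptotics $\big(1+\tf\,|x|^2\big)^{-d}\to e^{-|x|^2/2}$, Stirling's formula for the normalisation $c_d$, and the compact support of $v$ allow to pass to the limit via dominated convergence and identify the entropy limit as $\tfrac12\irng{|v|^2\log\(|v|^2/\nrmng v2^2\)}$; the gradient and $\mathrm L^2$ terms are handled exactly as in Lemma~\ref{Lem:WGNS-PLS}.

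For the second statement, I would combine the $p=2$ case with a Taylor expansion of $F_d(p):=\nrmsd{u_d}p^2$ at $p=2$. Differentiating $p\mapsto\big(\isd{|u_d|^p}\big)^{2/p}$ yields
\[
F_d'(2)=\tfrac 12\isd{|u_d|^2\,\log\(|u_d|^2/\nrmsd{u_d}2^2\)}\,,
\]
so that, for some $\xi_d$ lying between $p_d$ and~$2$,
\[
\frac{d^2}{2-p_d}\(\nrmsd{u_d}2^2-\nrmsd{u_d}{p_d}^2\)=\frac{d^2}2\isd{|u_d|^2\,\log\(|u_d|^2/\nrmsd{u_d}2^2\)}-\tfrac12\,d^2(2-p_d)\,F_d''(\xi_d)\,.
\]
By the first statement, the first two terms on the right, together with $d\,\nrmsd{\nabla u_d}2^2$, converge to the Gaussian logarithmic Sobolev deficit, and it only remains to show that the remainder $\tfrac12\,d^2(2-p_d)\,F_d''(\xi_d)$ vanishes as $d\to+\infty$.

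The main obstacle is to control $F_d''$ without any a~priori rate on $p_d\to 2$. The key observation I would exploit is that $u_d$ concentrates sharply on its mean: since $u_d(\omega)=v(\omega_1/\sqrt d,\ldots,\omega_n/\sqrt d)$, the chain rule and $|\nabla\omega_i|\le 1$ give $\nrmsd{\nabla u_d}2^2=O(1/d)$, and the Poincar\'e inequality $\nrmsd{u-\bar u}2^2\le d^{-1}\nrmsd{\nabla u}2^2$ on $\S^d$ yields $\nrmsd{u_d-\bar u_d}2^2=O(1/d^2)$. Writing $u_d=\bar u_d+\varepsilon_d$, expanding $F_d(p)$ in powers of $\varepsilon_d$ near $p=2$, and using that $F_d$ is identically constant in $p$ whenever $u_d$ is constant, so that $F_d''$ vanishes at leading order in $\varepsilon_d$, leads to $F_d''(\xi)=O\big(\nrmsd{\varepsilon_d}2^2\big)=O(1/d^2)$ uniformly for $\xi$ in a neighbourhood of~$2$; the pointwise control $\|\varepsilon_d\|_\infty\le 2\|v\|_\infty$ from the compact support of $v$ enters here to absorb third-order $\varepsilon_d$-moments. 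Hence $d^2(2-p_d)\,F_d''(\xi_d)=O(2-p_d)\to 0$ independently of the rate at which $p_d\to 2$.
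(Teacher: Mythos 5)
Your treatment of the first statement (the case $p=2$) is essentially the paper's own route: rerun the stereographic-projection computation of Lemma~\ref{Lem:WGNS-PLS} for the entropy functional and pass to the limit by dominated convergence for a fixed compactly supported $v$. This is exactly the ``adaptation of the proof of Theorem~\ref{Thm:GNStoPLS}'' that the paper invokes, and it is fine.

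For the second statement your argument has a genuine gap precisely where it claims to gain something, namely independence of the rate at which $p_d\to2$. The Taylor step and the identity $F_d'(2)=\tfrac12\isd{|u_d|^2\log\big(|u_d|^2/\nrmsd{u_d}2^2\big)}$ are correct, so everything hinges on the bound $F_d''(\xi)=O\big(\nrmsd{u_d-\bar u_d}2^2\big)$ uniformly for $\xi$ near $2$ with a $d$-independent constant. This is asserted (``$F_d''$ vanishes at leading order in $\varepsilon_d$'') but not proved, and it is not available in the generality needed. Writing $u_d=\bar u_d+\varepsilon_d$, the second $p$-derivative of $\nrmsd{u_d}p^2$ involves $\isd{|u_d|^p\log|u_d|}$, $\isd{|u_d|^p(\log|u_d|)^2}$ and $\log\nrmsd{u_d}pp$; the cancellation you invoke (the first variation at a constant in mean-zero directions vanishes identically in $p$) does hold, but turning it into a quantitative $O(\|\varepsilon_d\|_2^2)$ bound requires $|\bar u_d|$ to stay bounded away from $0$ relative to $\|\varepsilon_d\|_\infty$, because the relevant integrands are logarithmically singular on the zero set of $u_d$, and $u_d$ vanishes on a set of positive measure since $v$ is compactly supported. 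Nothing prevents $\bar u_d$ from being small or exactly $0$ (take $v$ odd in $y_1$); in that regime your scheme only yields bounds of the type $F_d''(\xi)=O\big(d^{-2}(\log d)^2\big)$, so the remainder is merely $O\big((2-p_d)(\log d)^2\big)$, which need not vanish when $p_d\to2$ slowly. (A further caveat: the estimates $\nrmsd{\nabla u_d}2^2=O(1/d)$ and, via Poincar\'e, $\nrmsd{u_d-\bar u_d}2^2=O(1/d^2)$ take the scaling in the statement literally; they force all $\mathrm L^p(\S^d)$ norms of $u_d$ to converge to one and the same constant, whereas the computation of Lemma~\ref{Lem:WGNS-PLS}, which dilates the first $n$ coordinates, produces the distinct Gaussian norms of $v$ in the limit, so the concentration mechanism you rely on is not compatible with the limit actually being the Gaussian deficit.) The paper's route avoids the issue altogether: keep the exponent $p_d$ inside the $d\to+\infty$ limit of Lemma~\ref{Lem:WGNS-PLS}, whose convergences are locally uniform in $p$ near $2$ for fixed compactly supported $v$, and only at the end use the elementary Gaussian-side limit
\[
\lim_{p\to2}\frac{\nrmng vp^2-\nrmng v2^2}{p-2}=\frac12\irng{|v|^2\,\log\Big(\tfrac{|v|^2}{\nrmng v2^2}\Big)}\,,
\]
which requires no uniform control of second derivatives in $p$ of spherical norms. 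You should either switch to that order of limits or supply a genuine proof of the uniform bound on $F_d''$, including the degenerate case $\bar u_d\to0$.
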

\begin{proof} The proof is an adaptation of the proof of Theorem~\ref{Thm:GNStoPLS} and, in the case $p_d\neq2$, relies on the standard observation that
\[
\lim_{p\to2}\frac{\nrmng vp^2-\nrmng v2^2}{p-2}=\frac12\irng{|v|^2\,\log\(\frac{|v|^2}{\nrmng v2^2}\)}\,.
\]
As this computation raises no special difficulty, details are omitted.
\end{proof}

\section{Entropy methods and nonlinear flows for Gaussian measures}\label{Sec:Nonlinear}

In this section, we prove the result of Theorem~\ref{Thm:BE-Gaussian} for the Gaussian measure $d\gamma$ and extend it to the slightly more general framework of a uniformly strictly log-concave measure $d\mu$, before drawing some consequences. Most of the results are similar to computations usually done on the sphere, but we are not aware of the use of nonlinear flows ($m\neq1$) in the context of Gaussian measures. This approach is very natural in the perspective of spheres in the large-dimensional limit.

\subsection{Gaussian interpolation inequalities: a proof by the \emph{carr\'e du champ} method}\label{Sec:Improved}
On $\R^n$, let us consider the probability measure
\be{dmu}
d\mu=Z^{-1}\,e^{-\phi}\,dy\quad\mbox{with}\quad Z=\irn{e^{-\phi}}
\ee
and redefine the \emph{Ornstein-Uhlenbeck operator}
by
\be{genOU}
\mathcal L:=\Delta-\nabla\phi\cdot\nabla
\ee
on $\mathrm L^2(\R^n,d\mu)$. This generalizes the case of the harmonic potential $\phi(y)=\frac12\,|y|^2$ considered in the introduction. We assume that $\phi$ satisfies the \emph{Bakry-Emery condition}
\be{BLW1}
\mathrm{Hess}\,\phi\ge\lambda_\star\,\mathrm{Id}\quad\mbox{a.e.}
\ee
for some $\lambda_\star>0$. The harmonic potential corresponds to the equality case with $\lambda_\star=1$. Under Assumption~\eqref{BLW1}, it is well known (see for instance~\cite[Section~7.6.2]{MR3155209}) that, with $\lambda=\lambda_\star$ and for any $p\in[1,2)$,
\be{Interpolation}
\nrmnmu{\nabla f}2^2\ge\frac\lambda{2-p}\(\nrmnmu f2^2-\nrmnmu fp^2\)\quad\forall\,f\in\mathrm H^1(\R^n,d\mu)
\ee
and also, by taking the limit as $p\to2_-$, that
\[
\nrmnmu{\nabla f}2^2\ge\frac\lambda2\irnmu{|f|^2\,\log\(\frac{|f|^2}{\nrmnmu f2^2}\)}\quad\forall\,f\in\mathrm H^1(\R^n,d\mu)\,.
\]
The classical proof by the carr\'e du champ, as in~\cite{MR772092,MR889476,MR1842428}, relies on the \emph{Ornstein-Uhlenbeck flow} $\partial_t\rho=\mathcal L\rho$ applied to the solution with initial datum $\rho(t=0,\cdot)=|f|^p$. Here we consider a more general strategy and compute as in the carr\'e du champ method using the \emph{nonlinear diffusion flow}
\be{rhoFDE}
\frac{\partial\rho}{\partial t}=\frac1m\,\mathcal L\rho^m\quad\forall\,t\ge0\,,\quad\rho(t=0,\cdot)=|f|^p\,.
\ee
There is no difficulty in proving global existence of a positive bounded solution. The case of a non-negative solution is also covered by a standard approximation of the initial datum. In the case of the harmonic potential, the key of the existence proof is to combine an $\mathrm L^1$-contraction property and the Maximum Principle as in the classical theory of nonlinear diffusions: see for instance~\cite[Chapter 9]{zbMATH05071489} for a general reference on this topic. In the case of a general potential $\phi$, for existence and uniqueness we refer to~\cite[Section~3]{MR1853037} and to~\cite{MR3497125} for a general overview of the topic. Concerning the $t\to+\infty$ limit, it is enough to consider initial data bounded from above and from below by two positive constants: this property is preserved along the flow and the nonlinear diffusion term can then be estimated using linear operators, so that the solution converges to a unique constant determined by mass conservation. The proofs can be adapted from~\cite{MR2481073,MR1853037} and from the results on the sphere on the other hand: see~\cite{BDS2022-Sphere} and references therein. As they present no essential difficulty, they will be omitted. Our goal here is to understand the range of $m$ for which we have a monotonicity property of the deficit as in the case $m=1$. Let $m_\pm(p)$ be defined as in~\eqref{mpm}.
\begin{theorem}\label{Thm:BE-logConcave} Assume that $n\ge1$, $p\in[1,2)$ and $m\in[m_-(p),m_+(p)]$. We consider the measure $d\mu$ as in~\eqref{dmu} such that~\eqref{BLW1} holds for some $\lambda_\star>0$. If $\rho>0$ solves~\eqref{rhoFDE} with $\mathcal L$ defined by~\eqref{genOU} for an initial datum $\rho(t=0,\cdot)=|f|^p$ in $\mathrm L^{2/p}\cap\mathrm L^1(\R^n,d\mu)$, then
\[
\frac d{dt}\(\nrmnmu{\nabla\rho^{1/p}}2^2-\frac{\lambda_\star}{2-p}\(\nrmnmu{\rho^{1/p}}2^2-\nrmnmu\rho1^{2/p}\)\)\le0\quad\forall\,t>0\,.
\]
\end{theorem}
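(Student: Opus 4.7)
The plan is to adapt the \emph{carr\'e du champ} method to the nonlinear flow~\eqref{rhoFDE}, using the Bakry--Emery condition~\eqref{BLW1} in place of the Ricci curvature lower bound that plays this role on~$\S^d$. Set $u:=\rho^{1/p}$ and $\alpha:=p(1-m)$; from $\rho=u^p$, equation~\eqref{rhoFDE} becomes
\[
\partial_t u = u^{-\alpha}\Bigl(\mathcal L u + (mp-1)\,\frac{|\nabla u|^2}{u}\Bigr).
\]
The mass $\int u^p\,d\mu=\int\rho\,d\mu$ is preserved along~\eqref{rhoFDE}, so it suffices to prove that
\[
\mathcal G[u]:=\int|\nabla u|^2\,d\mu - \frac{\lambda_\star}{2-p}\int u^2\,d\mu
\]
is non-increasing. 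A direct integration by parts gives $\tfrac{d}{dt}\int u^2\,d\mu = 2(p-2)\int u^{-\alpha}|\nabla u|^2\,d\mu$ and $-\tfrac12\tfrac{d}{dt}\int|\nabla u|^2\,d\mu = A + (mp-1)\,B$, where
\[
A:=\int u^{-\alpha}(\mathcal L u)^2\,d\mu,\qquad B:=\int u^{-\alpha-1}(\mathcal L u)\,|\nabla u|^2\,d\mu,\qquad C:=\int u^{-\alpha}|\nabla u|^2\,d\mu.
\]
The whole theorem thus reduces to the inequality $A + (mp-1)\,B - \lambda_\star\,C \ge 0$.

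Next I would invoke the Bochner--Weitzenb\"ock identity
\[
\tfrac12\,\mathcal L|\nabla u|^2 - \nabla u\cdot\nabla\mathcal L u = |\mathrm{Hess}\,u|^2 + \mathrm{Hess}\,\phi(\nabla u,\nabla u),
\]
multiply by $u^{-\alpha}$, integrate against $d\mu$, and push all derivatives onto the weights by integration by parts. This rewrites $A$ as $\int u^{-\alpha}|\mathrm{Hess}\,u|^2\,d\mu + \int u^{-\alpha}\mathrm{Hess}\,\phi(\nabla u,\nabla u)\,d\mu$ plus fully explicit multiples of $B$ and of $\int u^{-\alpha-2}|\nabla u|^4\,d\mu$, together with a term $\int u^{-\alpha-1}\,\mathrm{Hess}\,u(\nabla u,\nabla u)\,d\mu$ whose weight comes from $\nabla(u^{-\alpha})$. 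By~\eqref{BLW1}, the $\mathrm{Hess}\,\phi$-contribution is bounded below by $\lambda_\star C$, which exactly absorbs the offending $-\lambda_\star C$ term. What remains is a pointwise quadratic form in $\mathrm{Hess}\,u$, in the rank-one tensor $\nabla u\otimes\nabla u/u$, and in the scalars $\mathcal L u$ and $|\nabla u|^2/u$, with coefficients depending only on $m$ and $p$.

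The crux is then to recognise this residual quadratic form as a sum of manifestly non-negative squares. Following the sphere template from~\cite{MR2381156,MR3229793,BDS2022-Sphere}, I would complete squares in two stages: first combine the Hessian terms into $\bigl|\mathrm{Hess}\,u + a(m,p)\,\tfrac{\nabla u\otimes\nabla u}{u}\bigr|^2$ for a suitably chosen coefficient $a(m,p)$ (thereby eliminating the cross-term $\mathrm{Hess}\,u(\nabla u,\nabla u)$), and then absorb the remaining scalar cross-term between $\mathcal L u$ and $|\nabla u|^2/u$ by a second completion of the square. Non-negativity of the resulting scalar quadratic form in $(\mathcal L u,\,|\nabla u|^2/u)$ is equivalent to a discriminant condition, which is exactly the quadratic inequality in $m$ whose roots are $m_\pm(p)=1\pm\frac1p\sqrt{(p-1)(2-p)}$ in~\eqref{mpm}. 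Hence the form is non-negative precisely when $m\in[m_-(p),m_+(p)]$, and $\tfrac{d}{dt}\mathcal G[u]\le0$ follows.

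The main obstacle I expect is the bookkeeping of that sum-of-squares decomposition: the coefficients of the scalar quadratic form must be tracked exactly to identify them with the $m_\pm(p)$ discriminant, and crucially the argument must be dimension-free. On $\S^d$ one exploits $|\mathrm{Hess}\,u|^2\ge\tfrac1d(\Delta u)^2$ to widen the admissible range to $[m_-(d,p),m_+(d,p)]$; here this trace bound is deliberately not used, which is exactly why the Gaussian interval in Fig.~\ref{F2} coincides with the $d\to+\infty$ limit of the sphere intervals in Fig.~\ref{F1}. Finally, the smoothness and integrability required to justify every integration by parts, and the extension from bounded positive solutions to initial data in $\mathrm L^{2/p}\cap\mathrm L^1(\R^n,d\mu)$, would be handled by the standard nonlinear-diffusion existence and approximation theory quoted in the paragraph preceding the theorem (\cite{MR1853037,MR3497125,MR2481073}); the limit $p\to 2_-$ recovers the log-Sobolev case via~\cite{MR889476} since $m_\pm(2)=1$.
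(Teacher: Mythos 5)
Your proposal is correct and takes essentially the same route as the paper's own proof: a \emph{carr\'e du champ} computation along the nonlinear flow, the Bochner/commutator identity with $\mathrm{Hess}\,\phi\ge\lambda_\star\,\mathrm{Id}$ absorbing the curvature term, and a completion of squares whose discriminant condition fixes the admissible $m$; the paper only differs by homogenizing the weights away through the substitution $\rho=w^{\beta p}$ with $\beta=2/\big(2-p\,(1-m)\big)$, which it itself calls purely technical, and by justifying the integrations by parts via truncation to convex level sets of $\phi$ and Grisvard's lemma rather than by citing the existence/approximation theory. One small caveat: if you carry out your bookkeeping, the discriminant condition comes out as $p^2\,(1-m)^2\le4\,(p-1)\,(2-p)$, an interval that contains $[m_-(p),m_+(p)]$ rather than coinciding with it (the paper's condition~\eqref{Cdt-beta} on $\beta$ translates to the same wider interval), so the theorem as stated follows all the same.
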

\noindent The result of Theorem~\ref{Thm:BE-logConcave} is new for $m\neq1$. Theorem~\ref{Thm:BE-Gaussian} corresponds to the special case of the harmonic potential $\phi(y)=\frac12\,|y|^2$ with $v=\rho^{1/p}$ in Theorem~\ref{Thm:BE-logConcave}. The range of the admissible parameters $(m,p)$ is the same in Theorems~\ref{Thm:BE-Gaussian} and~\ref{Thm:BE-logConcave}, and shown in Fig.~\ref{F2}. With the additional observation that $\rho(t,\cdot)$ converges to a constant as $t\to+\infty$ so that the limit of the deficit is $0$, the monotonicity of the deficit of Theorem~\ref{Thm:BE-logConcave} provides us with a proof of~\eqref{Interpolation}. Indeed, as a monotone non-increasing function with limit $0$ as $t\to+\infty$, the deficit is non-negative for any $t\ge0$ and, as a special case, for the deficit written for the initial datum. This, written for $\rho(t=0,\cdot)=|f|^p$, is precisely Inequality~\eqref{Interpolation}.

\begin{proof}[Proof of Theorem~\ref{Thm:BE-logConcave}] In order to do computations, a very convenient reformulation is obtained with the substitution
\[
w^{\beta\,p}=\rho\,.
\]
Then, $w$ is a solution of the PDE 
\be{flow}
\frac{\partial w}{\partial t}=w^{2-2\beta}\(\mathcal Lw+\kappa\,\frac{|\nabla w|^2}w\)
\ee
for any $t\ge0$, with initial datum $w(t=0,\cdot)=|f|^{1/\beta}$, where
\be{kappa-beta}
\kappa:=\beta\,(p-2)+1\quad\mbox{and}\quad\beta:=\frac2{2-p\,(1-m)}\,.
\ee
A first computation shows that $\irnmu{w^{\beta p}}=\irnmu{|f|^p}$ is independent of $t$ because
\[
\frac d{dt}\irnmu{w^{\beta p}}=\beta\,p\irnmu{w^\kappa\(\mathcal Lw+\kappa\,\frac{|\nabla w|^2}w\)}=0\,.
\]
A second useful computation goes as follows:
\begin{align*}
-\frac1{2\,\beta^2}\,&\,\frac d{dt}\irnmu{\(\big|\nabla w^\beta\big|^2+\frac{\lambda_\star}{p-2}\,w^{2\beta}\)}\\
=&\irnmu{\(\mathcal Lw+(\beta-1)\,\frac{|\nabla w|^2}w-\frac{\lambda_\star\,w}{\beta\,(p-2)}\)\(\mathcal Lw+\kappa\,\frac{|\nabla w|^2}w\)}\\
=&\irnmu{(\mathcal Lw)^2}+(\kappa+\beta-1)\irnmu{(\mathcal Lw)\,\frac{|\nabla w|^2}w}+\kappa\,(\beta-1)\irnmu{\frac{|\nabla w|^4}{w^2}}-\lambda_\star\irnmu{|\nabla w|^2}\,.
\end{align*}
The purely technical purpose of introducing the exponent $\beta$ is to make the last line of the above computation $2$-homogeneous in $w$, which makes the discussion easier to read. Inserting the two following estimates,
\begin{align*}
\irnmu{(\mathcal Lw)^2}=-\irnmu{\nabla w\cdot\nabla(\mathcal Lw)}&=-\irnmu{\nabla w\cdot(\mathcal L\nabla w)}+\irnmu{\nabla w\cdot[\mathcal L,\nabla]\,w}\\
&=\irnmu{\|\mathrm{Hess}w\|^2}+\irnmu{\nabla w\cdot[\mathcal L,\nabla]\,w}\\
&=\irnmu{\|\mathrm{Hess}w\|^2}+\irnmu{\mathrm{Hess}\,\phi:\nabla w\otimes\nabla w}\\
&\ge\irnmu{\|\mathrm{Hess}w\|^2}+\lambda_\star\irnmu{|\nabla w|^2}
\end{align*}
and
\[
\irnmu{(\mathcal Lw)\,\frac{|\nabla w|^2}w}=-\,2\irnmu{\mathrm{Hess}w:\frac{\nabla w\otimes\nabla w}w}+\irnmu{\frac{|\nabla w|^4}{w^2}}\,,
\]
we obtain that
\[
\frac d{dt}\irnmu{\(\big|\nabla w^\beta\big|^2+\frac{\lambda_\star}{p-2}\,w^{2\beta}\)}\le0
\]
if, for any function $w$, we have
\begin{multline*}
\mathcal Q_\beta[w]:=\irnmu{\|\mathrm{Hess}w\|^2}-2\,(\kappa+\beta-1)\irnmu{\mathrm{Hess}w:\frac{\nabla w\otimes\nabla w}w}\\
+\big(\kappa\,(\beta-1)+\kappa+\beta-1\big)\irnmu{\frac{|\nabla w|^4}{w^2}}\ge0\,.
\end{multline*}
A sufficient condition is obtained if the reduced discriminant is negative, that is, if
\[
(\kappa+\beta-1)^2-\big(\kappa\,(\beta-1)+\kappa+\beta-1\big)\le0\,.
\]
Altogether, this gives the condition
\be{Cdt-beta}
\beta_-(p)\le\beta\le\beta_+(p)\quad\mbox{with}\quad\beta_\pm(p):=\frac{1\pm\sqrt{(p-1)\,(2-p)}}{1-(p-1)\,(2-p)}\,.
\ee
See Fig.~\ref{F3}. Equivalently, written in terms of $m$, the condition is $m_-(p)\le m\le m_+(p)$ with $m_\pm(p)$ defined by~\eqref{mpm}. Summarizing our computations, we learn that
\be{ResQbeta}
\frac d{dt}\(\nrmnmu{\nabla w}2^2-\frac{\lambda_\star}{p-2}\(\nrmnmu wp^2-\nrmnmu w2^2\)\)=-\,2\,\beta^2\,\mathcal Q_\beta[w]\le0
\ee
because $\mathcal Q_\beta[w]$ is non-negative under Condition~\eqref{Cdt-beta}.

To make these computations rigorous, one has to justify all integrations by parts. This is by now rather standard and can be done using the following scheme.
\begin{enumerate}
\item Let $h>0$ be large enough and consider $\Omega_h:=\left\{x\in\R^n\,:\,\phi(x)<h\right\}$. We can consider the evolution equation restricted to $\Omega_h$, with no flux boundary conditions. Then apply the \emph{carr\'e du champ} method and keep track of the boundary terms. Since $\Omega_h$ is a bounded convex domain, these terms have a sign due to Grisvard's lemma: see for instance~\cite[Lemma 5.2]{MR2533926},~\cite[Proposition~4.2]{MR3150642},~\cite{MR3396210,MR2435196} and~\cite[Lemma~A.3]{MR3497125}.
\item Extend Inequality~\eqref{Interpolation} written on $\Omega_h$ to $\R^d$ by taking the limit as $h\to+\infty$ and then argue by density.
\end{enumerate}
This completes the proof of Theorem~\ref{Thm:BE-logConcave}.
\end{proof}
\begin{figure}[ht]
\includegraphics[width=6cm]{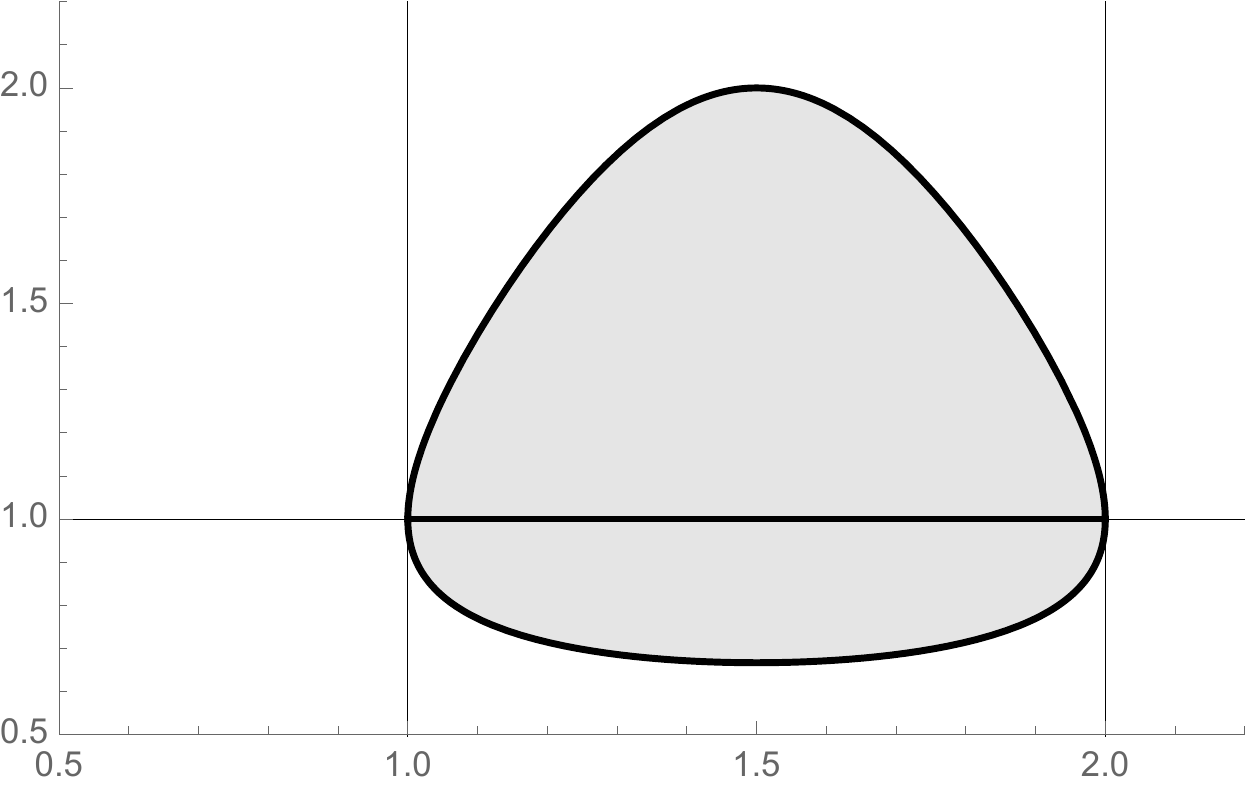}
\caption{\label{F3} The admissible parameters $p$ and $\beta$ correspond to the grey area and are independent of the dimension $n$. The boundary of the admissible set is tangent to the vertical lines $p=1$ at $(\beta,p)=(1,1)$ and $p=2$ at $(\beta,p)=(1,2)$. This figure corresponds to Fig.~\ref{F2} up to the transformation of $m\mapsto\beta=2/\big(2-p\,(1-m)\big)$ according to~\eqref{kappa-beta}.}
\end{figure}
\begin{remark} For any $p\in[1,2)$, if the condition $m_-(p)\le m\le m_+(p)$ is not satisfied, then one can find a positive initial datum such that the function $v=w^\beta$ where $w$ solves~\eqref{flow} with $m$ and $\beta$ related by~\eqref{kappa-beta} is such that
\[
\frac d{dt}\(\nrmnmu{\nabla v}2^2-\frac d{p-2}\(\nrmnmu vp^2-\nrmnmu v2^2\)\)_{|t=0}>0\,.
\]
See~\cite{MR3640894} for a similar statement on the sphere and its proof. \end{remark}

\subsection{Improved inequalities based on the \emph{carr\'e du champ} method}\label{Sec:ImprovedBE}

In the proof of Theorem~\ref{Thm:BE-logConcave}, using only $\mathcal Q_\beta[w]\ge0$ is a crude estimate. Let us explain how one can obtain improved estimates by making a better use of $\mathcal Q_\beta[w]\ge0$. Under Condition~\eqref{Cdt-beta}, we can indeed rewrite $\mathcal Q_\beta[w]$ as an integral of a sum of squares,
\be{Qbeta}
\mathcal Q_\beta[w]=\irnmu{\left\|\mathrm{Hess}w-(\kappa+\beta-1)\,\frac{\nabla w\otimes\nabla w}w\right\|^2}+\delta\irnmu{\frac{|\nabla w|^4}{w^2}}
\ee
with
\be{delta}
\delta:=\kappa\,(\beta-1)+\kappa+\beta-1-(\kappa+\beta-1)^2=\big(\beta-\beta_-(p)\big)\,\big(\beta_+(p)-\beta\big)>0\,.
\ee
As in~\cite[Theorem~2]{MR2152502}, let us consider the special case $m=\beta=1$ of the linear flow. Let us define the \emph{entropy} and the \emph{Fisher information} by
\[
\mathcal E[w]:=\frac1{2-p}\(\nrmnmu w2^2-\nrmnmu wp^2\)\quad\mbox{and}\quad\mathcal I[w]:=\nrmnmu{\nabla w}2^2\,.
\]
Inequality~\eqref{Interpolation} amounts simply to
\[
\mathcal I[w]-\lambda_\star\,\mathcal E[w]\ge0\,.
\]
We can now state a first \emph{improved entropy--entropy production inequality}.
\begin{proposition}\label{Prop:AD2005} Let $n$ be any positive integer. We consider the measure $d\mu$ as in~\eqref{dmu} such that~\eqref{BLW1} holds for some $\lambda_\star>0$. For any $p\in(1,2)$, let $\varphi(s):=1+s-(1+s)^{p-1}$. With the above notation, we have
\be{Ineq:AD}
\mathcal I[f]\ge\frac{\lambda_\star}{(2-p)^2}\,\nrmnmu fp^2\,\varphi\(\frac{(2-p)\,\mathcal E[f]}{\nrmnmu fp^2}\)\quad\forall\,f\in\mathrm H^1(\R^n,d\mu)\,.
\ee
\end{proposition}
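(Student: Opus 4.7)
The plan is to revisit Theorem~\ref{Thm:BE-logConcave} in its linear case $m=\beta=1$ (the Ornstein--Uhlenbeck flow), extract a sharper consequence from the sum-of-squares decomposition~\eqref{Qbeta}, and match the resulting differential inequality for $\mathcal I$ against the ODE solved by the candidate function $F(u):=\tfrac{\lambda_\star}{(2-p)^2}\,Y\,\varphi\bigl((2-p)u/Y\bigr)$, with $Y:=\nrmnmu{f}{p}^2$.

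First I would take $\rho$ solving~\eqref{rhoFDE} with $m=1$ and initial datum $|f|^p$, and set $w:=\rho^{1/p}$. Then $Y=\nrmnmu{w}{p}^2$ is conserved in time, a direct integration by parts gives $\frac{d}{dt}\mathcal E[w]=-2\,\mathcal I[w]$, and~\eqref{ResQbeta} specialized to $\beta=1$ reads $\frac{d}{dt}(\mathcal I-\lambda_\star\mathcal E)=-2\,\mathcal Q_1[w]$. Taking $\beta=1$ in~\eqref{Qbeta} forces $\kappa=p-1$ and reduces~\eqref{delta} to $\delta=(p-1)(2-p)>0$, so, dropping the non-negative Hessian square,
\[
\mathcal Q_1[w]\ge\delta\irnmu{\frac{|\nabla w|^4}{w^2}}\ge\delta\,\frac{\mathcal I[w]^2}{\nrmnmu{w}{2}^2}\,,
\]
where the second inequality is a Cauchy--Schwarz applied to the factorization $|\nabla w|^2=(|\nabla w|^2/w)\cdot w$. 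Using $\nrmnmu{w}{2}^2=Y+(2-p)\,\mathcal E[w]$, the evolution of the Fisher information thus satisfies
\[
\frac{d\mathcal I}{dt}\le-2\,\lambda_\star\,\mathcal I-\frac{2\,\delta\,\mathcal I^2}{Y+(2-p)\,\mathcal E}\,.
\]

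Since $\mathcal E(t)$ strictly decreases along any non-trivial trajectory, I would reparametrize by $u:=\mathcal E(t)$, set $R(u):=\mathcal I[w(t)]$, and divide by $\tfrac{d\mathcal E}{dt}=-2\,\mathcal I<0$ to get the first-order differential inequality
\[
\frac{dR}{du}\ge\lambda_\star+\frac{\delta\,R}{Y+(2-p)\,u}\,,
\]
together with $R(u)\to 0$ as $u\to 0^+$. A direct computation using $\varphi'(s)=1-(p-1)(1+s)^{p-2}$ and the key identity $\delta/(2-p)=p-1$ then shows that $F$ solves the associated linear ODE equality $F'(u)=\lambda_\star+\delta\,F(u)/\bigl(Y+(2-p)u\bigr)$ with $F(0)=0$: the integrating factor $\mu(u)=\bigl(1+(2-p)u/Y\bigr)^{-(p-1)}$ reduces it to $(F\mu)'=\lambda_\star\,\mu$, and integration from $0$ to $u$ reproduces exactly $F(u)=\tfrac{\lambda_\star\,Y}{(2-p)^2}\,\varphi\bigl((2-p)u/Y\bigr)$, the right-hand side of~\eqref{Ineq:AD}.

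The last step is a comparison: applying the same integrating factor to the defect $G:=R-F$ turns the differential inequality into $(G\mu)'\ge0$, so $G\mu$ is non-decreasing on $(0,\mathcal E[f]]$. Since both $R$ and $F$ tend to $0$ as $u\to 0^+$ (for $R$ this uses the convergence of $w(t,\cdot)$ to a constant as $t\to+\infty$ recalled in the proof of Theorem~\ref{Thm:BE-logConcave}), one has $(G\mu)(0^+)=0$, whence $G\ge0$ throughout and in particular $\mathcal I[f]=R(\mathcal E[f])\ge F(\mathcal E[f])$, which is~\eqref{Ineq:AD}. The main obstacle I expect is that this comparison is backward-in-time in nature: the defect $G$ cannot be controlled forward from the initial datum because its sign is not known a priori, so one must propagate from $t=+\infty$, which in turn requires the long-time asymptotics of~\eqref{rhoFDE} to be known in a quantitative enough form to guarantee $G(u)\to 0$ as $u\to 0^+$, together with strict monotonicity of $\mathcal E$ along non-trivial trajectories to justify the reparametrization by $u$.
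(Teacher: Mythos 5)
Your proposal is correct and is essentially the paper's own argument: the same linear flow ($m=\beta=1$), the same sum-of-squares value $\delta=(p-1)(2-p)$, the same Cauchy--Schwarz bound $\irnmu{|\nabla w|^4/w^2}\ge\mathcal I^2/\nrmnmu w2^2$, and the same backward-in-time comparison with the solution of $\varphi'-(p-1)\,\varphi/(1+s)=2-p$, using the decay of entropy and Fisher information as $t\to+\infty$. Your reparametrization by $u=\mathcal E$ with an explicit integrating factor is just a rewriting of the paper's second-order differential inequality for $\mathsf e(t)$ and its ansatz $y'=\frac{\mathsf a}{\mathsf b-1}\,\varphi(y)$, so the two proofs coincide in substance.
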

\noindent Since $p\in(1,2)$, the function $\varphi$ is convex with $\varphi(0)=0$ and $\varphi'(0)=2-p$, which implies in particular that $\varphi(s)\ge(2-p)\,s$ for any $s\ge0$, so that Inequality~\eqref{Ineq:AD} is stronger than~\eqref{Interpolation}. Inequality~\eqref{Ineq:AD} amounts~to
\[
\nrmnmu{\nabla f}2^2\ge\frac{\lambda_\star}{(2-p)^2}\(\nrmnmu f2^2-\nrmnmu f2^{2\,(p-1)}\,\nrmnmu fp^{2\,(2-p)}\)
\]
and can also be rewritten as
\begin{multline}\label{Interpolation2}
\nrmnmu{\nabla f}2^2-\frac{\lambda_\star}{2-p}\(\nrmnmu f2^2-\nrmnmu fp^2\)\\
\ge\frac{\lambda_\star}{(2-p)^2}\((p-1)\,\nrmnmu f2^2+(2-p)\,\nrmnmu fp^2-\nrmnmu f2^{2\,(p-1)}\,\nrmnmu fp^{2\,(2-p)}\)\,.
\end{multline}
We claim no originality in either Proposition~\ref{Prop:AD2005} or Inequality~\eqref{Interpolation2} and refer to~\cite[Theorem~2]{MR2152502} and~\cite[Ineq.~(3.3)]{MR2375056} for earlier results. Also see~\cite[Theorem~2.1]{MR4095470} for an improved inequality like~\eqref{Interpolation2} in the case of the sphere. Let us give the main ideas of the proof for completeness. We refer to~\cite[Appendix~B.4]{BDS2022-Sphere} for a fully detailed estimates in the similar case of the sphere.
\begin{proof}[Sketch of a proof] With $\beta=1$, notice that~\eqref{Qbeta} holds with $\delta=(2-p)\,(p-1)$. Using the Cauchy-Schwarz inequality, we obtain
\[
\big(\mathcal I[w]\big)^2=\(\irnmu{|\nabla w|^2}\)^2\le\irnmu{|w|^2}\irnmu{\frac{|\nabla w|^4}{w^2}}
\]
and, with $M:=\(\irnmu{|w|^p}\)^{2/p}$, we can also write that
\[
\irnmu{|w|^2}=(2-p)\,\mathcal E[w]+M\,.
\]
Altogether, with $\mathsf e(t):=(2-p)\,M^{-1}\,\mathcal E[w(t,\cdot)]$, if $w$ solves~\eqref{flow} with $\beta=1$, then
\be{e'}
\mathsf e'=-\,2\,\beta^2\,(2-p)\irnmu{|\nabla w|^2}
\ee
and we have the differential inequality
\[
\mathsf e''+2\,\lambda_\star\,\mathsf e'-(p-1)\,\frac{(\mathsf e')^2}{1+\mathsf e}\ge0\,.
\]
We claim that $(2-p)\,\mathsf e'+2\,\lambda_\star\(1+\mathsf e-(1+\mathsf e)^{p-1}\)\le0$, which follows from the observation that the equation
\[
y''+\mathsf a\,y'-\mathsf b\,\frac{(y')^2}{1+y}=0
\]
can be solved using the ansatz $y'=\frac{\mathsf a}{\mathsf b-1}\,\varphi(y)$ with $\varphi(0)=0$ if
\be{EDO:b}
\varphi'-\mathsf b\,\frac\varphi{1+s}=1-\mathsf b\,.
\ee
It is straightforward to check that the unique solution is $\varphi(s)=1+s-(1+s)^{\mathsf b}$. With $\mathsf a=2\,\lambda_\star$ and $\mathsf b=p-1$, we obtain
\[
\(\mathsf e'+\frac{2\,\lambda_\star}{2-p}\,\varphi(\mathsf e)\)'\ge(p-1)\,\frac{\mathsf e'}{1+\mathsf e}\(\mathsf e'+\frac{2\,\lambda_\star}{2-p}\,\varphi(\mathsf e)\)\,.
\]
By integrating from $t=0$ to $+\infty$ using the facts that $\mathsf e'\le0$, $\mathsf e'+\frac{2\,\lambda_\star}{2-p}\,\mathsf e\le0$ and $\lim_{t\to+\infty}\mathsf e'(t)=\lim_{t\to+\infty}\mathsf e(t)=0$, we conclude that
\[
\frac{2-p}M\(\mathcal I[w]-\frac{\lambda_\star\,M}{(2-p)^2}\,\varphi\(\frac{(2-p)\,\mathcal E[w]}M\)\)=-\,\frac12\(\mathsf e'+\frac{2\,\lambda_\star}{2-p}\,\varphi(\mathsf e)\)\ge0\quad\forall\,t\ge0
\]
\end{proof}

Inspired once more by the results on the sphere (see~\cite{MR4095470} and earlier references therein), it is a very natural question to wonder if \emph{improved entropy--entropy production inequalities} can be achieved with $\beta\neq1$. The answer goes as follows. Let us consider the function $\varphi_\beta$ given by
\[
\chi_\beta(s)=\big(1-\mathsf b(\beta)\big)\(1+s-(1+s)^{\mathsf b(\beta)}\)\quad\mbox{where}\quad\mathsf b(\beta)=\frac{\delta(\beta)}{\beta^2}\,\frac{2-p}{\lambda_\star
}
\]
and $\delta(\beta)$ is defined by~\eqref{delta}.
\begin{proposition}\label{Prop:Qbeta} Let $n$ be any positive integer. We consider the measure $d\mu$ as in~\eqref{dmu} such that~\eqref{BLW1} holds for some $\lambda_\star>0$. For any $p\in(1,2)$, take $\beta\in\big[1,\beta_+(p)\big]$. With the same notation as in Proposition~\ref{Prop:AD2005} and $\chi_\beta$ as above, we have
\[
\mathcal I[f] \ge \nrmnmu fp^2\,\chi_\beta\(\frac{(2-p)\,\mathcal E[f]}{\nrmnmu fp^2}\)\quad\forall\,f\in\mathrm H^1(\R^n,d\mu)\,.
\]
\end{proposition}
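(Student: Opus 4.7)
My plan is to follow the ODE strategy from the sketch of Proposition~\ref{Prop:AD2005}, but along the \emph{nonlinear} diffusion flow~\eqref{flow} for an arbitrary $\beta\in[1,\beta_+(p)]$, exploiting the sharp sum-of-squares refinement~\eqref{Qbeta} which produces the positive remainder $\delta(\beta)\irnmu{|\nabla w|^4/w^2}$. Letting $w$ solve~\eqref{flow} with $w(0,\cdot)^\beta=f$ and setting $v(t,\cdot):=w(t,\cdot)^\beta$, one verifies as in the proof of Theorem~\ref{Thm:BE-logConcave} that $\irnmu{|v|^p}$ is conserved along the flow, so that $M:=\nrmnmu fp^2$ is constant. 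Introducing the normalized entropy $\mathsf{e}(t):=(2-p)\,\mathcal{E}[v(t)]/M$, an integration by parts along the flow yields the first-order identity
\[
\mathsf{e}'(t)=-\,\frac{2\,\beta^2\,(2-p)}M\irnmu{|\nabla w|^2}.
\]

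The second-order differential inequality for $\mathsf{e}$ would be derived by combining three inputs: the monotonicity identity~\eqref{ResQbeta}, which reads $\frac{d}{dt}\bigl(\mathcal{I}[v]-\lambda_\star\mathcal{E}[v]\bigr)=-2\beta^2\mathcal{Q}_\beta[w]$; the refinement $\mathcal{Q}_\beta[w]\ge\delta(\beta)\irnmu{|\nabla w|^4/w^2}$ furnished by~\eqref{Qbeta}; and the Cauchy--Schwarz inequality $\bigl(\irnmu{|\nabla w|^2}\bigr)^2\le\irnmu{w^2}\cdot\irnmu{|\nabla w|^4/w^2}$. Rewriting $\irnmu{w^2}=\nrmnmu v{2/\beta}^{2/\beta}$ in terms of $M$ and $\mathsf{e}$ via H\"older's inequality---the restriction $\beta\ge1$ being crucial, since it guarantees $2/\beta\le2$ so that one can interpolate between $\nrmnmu vp^2=M$ and $\nrmnmu v2^2=M(1+\mathsf{e})$---one is led to an inequality of the form
\[
\mathsf{e}''+2\,\lambda_\star\,\mathsf{e}'-\mathsf{b}(\beta)\,\frac{(\mathsf{e}')^2}{1+\mathsf{e}}\ge0,
\]
in which $\mathsf{b}(\beta)$ matches exactly the constant appearing in the statement.

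This inequality is then solved using the same ansatz as in the sketch of Proposition~\ref{Prop:AD2005}: seeking $G(t):=\mathsf{e}'(t)+\frac{2\lambda_\star}{1-\mathsf{b}(\beta)}\,\chi_\beta(\mathsf{e}(t))\le0$ reduces the problem to the first-order linear ODE $\chi_\beta'-\mathsf{b}(\beta)\,\chi_\beta/(1+s)=1-\mathsf{b}(\beta)$ with $\chi_\beta(0)=0$, whose unique solution is precisely $\chi_\beta(s)=(1-\mathsf{b}(\beta))\bigl(1+s-(1+s)^{\mathsf{b}(\beta)}\bigr)$. As in the proof of Proposition~\ref{Prop:AD2005}, the inequality $G\le0$ propagates from $t=+\infty$---where both $\mathsf{e}(t)$ and $\mathsf{e}'(t)$ vanish because $v(t,\cdot)$ converges to its conserved average---back to $t=0$, and evaluating at $t=0$ (where $v(0,\cdot)=f$) gives the announced inequality.

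The main obstacle will be the reduction of $\irnmu{w^2}$ to an expression of the form $M\,\rho(\mathsf{e})$ when $\beta\ne1$. The H\"older interpolation $\nrmnmu v{2/\beta}\le\nrmnmu vp^{1-\theta}\,\nrmnmu v2^{\theta}$ has exponent $\theta=(2-p\,\beta)/(2-p)$, which belongs to $[0,1]$ only when $\beta\le2/p$; for $\beta\in(2/p,\beta_+(p)]$ one instead relies on the elementary bound $\nrmnmu v{2/\beta}\le\nrmnmu vp$, valid on a probability measure when $2/\beta<p$. Splicing the two regimes so that the constant $\mathsf{b}(\beta)$ emerges cleanly in the differential inequality, together with rigorously justifying the long-time behavior $\mathsf{e}(t),\mathsf{e}'(t)\to0$ needed for the integration, constitute the delicate bookkeeping I expect to require the most care.
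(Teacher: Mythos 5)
Your plan breaks down at the point where you claim a closed second-order differential inequality for $\mathsf e$ alone when $\beta\neq1$. The monotonicity identity~\eqref{ResQbeta} together with~\eqref{Qbeta} controls the time derivative of $\mathsf i-\frac{\lambda_\star}{2-p}\,\mathsf e$, where $\mathsf i$ is the normalized Fisher information of $v=w^\beta$, that is, $\irnmu{|\nabla w^\beta|^2}$ up to the factor $M$. On the other hand, along~\eqref{flow} one has $\mathsf e'\propto-\irnmu{|\nabla w|^2}$, and for $\beta\neq1$ the quantities $\irnmu{|\nabla w^\beta|^2}$ and $\irnmu{|\nabla w|^2}$ are genuinely different, so $\mathsf i$ is not a constant multiple of $-\mathsf e'$ and $\frac d{dt}\big(\mathsf i-\frac{\lambda_\star}{2-p}\,\mathsf e\big)$ is not $\mathsf e''$ up to constants. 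Hence your three ingredients do not combine into $\mathsf e''+2\,\lambda_\star\,\mathsf e'-\mathsf b(\beta)\,(\mathsf e')^2/(1+\mathsf e)\ge0$. Moreover, even if such an inequality were available, evaluating your Gr\"onwall argument at $t=0$ would produce a lower bound on $-\mathsf e'(0)\propto\irnmu{|\nabla f^{1/\beta}|^2}$, not on $\mathcal I[f]=\irnmu{|\nabla f|^2}$, so it would not yield the stated conclusion.

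The actual proof keeps the pair $(\mathsf i,\mathsf e)$ and replaces your Cauchy--Schwarz step by a Demange-type estimate: two H\"older inequalities (using $\beta>1/2$) yield
\[
\irnmu{\frac{|\nabla w|^4}{w^2}}\ge\frac{\irnmu{|\nabla w|^2}\,\irnmu{\big|\nabla w^\beta\big|^2}}{\beta^2\irnmu{w^{2\beta}}}\,,
\]
that is, a lower bound proportional to the \emph{product} $\mathsf i\,|\mathsf e'|/(1+\mathsf e)$ rather than to $(\mathsf e')^2$. This turns~\eqref{ResQbeta}--\eqref{Qbeta} into the first-order differential inequality $\big(\mathsf i-\frac{\lambda_\star}{2-p}\,\chi_\beta(\mathsf e)\big)'\le\frac\delta{\beta^2}\,\frac{\mathsf e'}{1+\mathsf e}\big(\mathsf i-\frac{\lambda_\star}{2-p}\,\chi_\beta(\mathsf e)\big)$, where $\chi_\beta$ solves exactly the linear ODE you identified, and a Gr\"onwall argument with $\mathsf e,\mathsf e'\to0$ as $t\to+\infty$ gives the inequality at $t=0$ directly for $\mathsf i$. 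Note also that this route only requires $\irnmu{w^{2\beta}}=\Nrmnmu{w^\beta}2^2$, so the interpolation issue for $\nrmnmu v{2/\beta}$ that you flagged (splicing the regimes $\beta\le2/p$ and $\beta>2/p$) never arises: it is a symptom of the Cauchy--Schwarz route rather than a bookkeeping difficulty of the correct proof.
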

\begin{proof} With the notation
\[
\mathsf e:=\frac{\Nrmnmu{w^\beta}2^2}{\Nrmnmu{w^\beta}p^2}\quad\mbox{and}\quad\mathsf i:=\frac{\Nrmnmu{\nabla\big(w^\beta\big)}2^2}{\Nrmnmu{w^\beta}p^2}\,,
\]
we have now to consider the differential inequality
\[
\(\mathsf i'-\frac{\lambda_\star}{2-p}\,\mathsf e\)'\le-\,\delta\,\beta^2\irnmu{\frac{|\nabla w|^4}{w^2}}\,,
\]
which directly comes from the \textit{carr\'e du champ} computation~\eqref{ResQbeta} with $\mathcal{Q}_\beta$ and $\delta$ defined respectively in~\eqref{Qbeta} and~\eqref{delta}. The key ingredient is to replace an estimate due to Demange in the case of the sphere for $p>2$ (see~\cite{Demange-PhD,MR2381156} and also ~[Lemma~15,~(iii)]) by its counterpart for log-concave measures and $p\in(1,2)$. Compared to the linear case, the Cauchy-Schwarz inequality in the proof of Proposition~\ref{Prop:AD2005} has to be replaced by the H\"older inequalities
\begin{align*}
&\irnmu{|\nabla w|^2}=\irnmu{\(\frac{|\nabla w|^2}w\,w\,1\)}\le\(\irnmu{\frac{|\nabla w|^4}{w^2}}\)^\frac12\(\irnmu{|w|^{2\,\beta}}\)^\frac1{2\,\beta}1^\frac{\beta-1}{2\,\beta}\,,\\
&\frac1{\beta^2}\irnmu{\big|\nabla w^\beta\big|^2}=\irnmu{\(\frac{|\nabla w|^2}w\,w^{2\,\beta-1}\,1\)}\le\(\irnmu{\frac{|\nabla w|^4}{w^2}}\)^\frac12\(\irnmu{|w|^{2\,\beta}}\)^\frac{2\,\beta-1}{2\,\beta}1^\frac1{2\,\beta}\,,
\end{align*}
after observing that $\beta\ge\beta_-(p)\ge\beta_-(3/2)=2/3>1/2$, so that
\[
\irnmu{\frac{|\nabla w|^4}{w^2}}\ge\frac{\irnmu{|\nabla w|^2}\,\irnmu{\big|\nabla w^\beta\big|^2}}{\beta^2\irnmu{w^{2\beta}}}=-\,\frac{\mathsf i\,\mathsf e'}{\beta^4\,(1+\mathsf e)}\,.
\]
where the last equality is a consequence of the definition of $\mathsf i$ and~\eqref{e'}. Hence,
\[
\(\mathsf i-\frac{\lambda_\star}{2-p}\,\mathsf e\)'\le\frac\delta{\beta^2}\,\frac{\mathsf i\,\mathsf e'}{1+\mathsf e}\,.
\]
Let us compute
\[
\(\mathsf i-\frac{\lambda_\star}{2-p}\,\chi(\mathsf e)\)'=\(\mathsf i-\frac{\lambda_\star}{2-p}\,\mathsf e\)'+\frac{\lambda_\star}{2-p}\(\mathsf e-\chi(\mathsf e)\)'\le\frac\delta{\beta^2}\,\frac{\mathsf e'}{1+\mathsf e}\(\mathsf i-\frac{\lambda_\star}{2-p}\,\chi(\mathsf e)\)
\]
on the condition that $\chi$ solves
\[
\chi'(s)=1+\frac\delta{\beta^2}\,\frac{2-p}{\lambda_\star}\,\frac{\chi(s)}{1+s}\,,\quad\chi(0)=0\,.
\]
The solution $\chi(s)=(1-\mathsf b)\,\varphi(s)$ is such that $\varphi$ solves~\eqref{EDO:b} with $\mathsf b=\delta\,\beta^{-2}\,(2-p)/\lambda_\star$, which shows that $\chi=\chi_\beta$. The proof then follows from the same considerations as for Proposition~\ref{Prop:AD2005} (also see~\cite[Appendix~B.4]{BDS2022-Sphere} for details).\end{proof}

If $\phi(y)=|y|^2/2$ is the harmonic potential so that $d\mu=d\gamma$, by testing~\eqref{Ineq:PLS} with $f_\varepsilon(y)=1+\varepsilon\,y_1$ where $y_1$ denotes the first coordinate of $y=(y_1,y_2,\ldots,y_n)\in\R^n$, we find that
\[
\nrmng{\nabla f_\varepsilon}2^2-\frac1{2-p}\(\nrmng{f_\varepsilon}2^2-\nrmng{f_\varepsilon}p^2\)=\frac12\,(p-1)\,\varepsilon^4+O(\varepsilon^5)\quad\mbox{as}\quad\varepsilon\to0\,.
\]
This is the standard computation for checking that $\lambda=\lambda_\star=1$ is the optimal constant in~\eqref{Ineq:PLS}. Since
\[
\mathcal I[f_\varepsilon]-\frac1{(2-p)^2}\,\nrmng{f_\varepsilon}p^2\,\varphi\(\frac{(2-p)\,\mathcal E[f_\varepsilon]}{\nrmng{f_\varepsilon}p^2}\)=\frac12\,(p-1)^2\,\varepsilon^4+O(\varepsilon^5)\quad\mbox{as}\quad\varepsilon\to0\,,
\]
we also learn that~\eqref{Ineq:AD} involves the optimal exponent at least in the limit as $\varepsilon\to0$. After observing that $\nrmng{\nabla f_\varepsilon}2^2=\varepsilon^2$, we may wonder whether the \emph{deficit}
\[
\nrmng{\nabla f}2^2-\frac1{2-p}\(\nrmng f2^2-\nrmng fp^2\)
\]
measures the distance in terms of $\nrmng{\nabla f}2^4$. The detailed answer is not limited to the case $\mu=\gamma$ and goes as follows. For simplicity, we take $\beta=1$ and consider $\varphi(s):=1+s-(1+s)^{p-1}$ as in Proposition~\ref{Prop:AD2005}. We recall that $\varphi$ is monotone increasing and convex on $\R^+$, such that $\varphi'(0)=2-p$, hence invertible of inverse $\varphi^{-1}$ such that $\psi(t):=t-(2-p)\,\varphi^{-1}(t)$ is also a convex, non-negative, monotone increasing function.
\begin{corollary}\label{Cor:4thOrder} Let $p\in(1,2)$ and $n$ be a positive integer. We consider the measure $d\mu$ as in~\eqref{dmu} such that~\eqref{BLW1} holds for some $\lambda_\star>0$. With $\psi$ as above, for any $f\in\mathrm H^1(d\gamma)$ we have
\[
\nrmnmu{\nabla f}2^2-\frac{\lambda_\star}{2-p}\(\nrmnmu f2^2-\nrmnmu fp^2\)\ge\frac{\lambda_\star}{2-p}\,\nrmnmu fp^2\,\psi\(\frac{2-p}{\lambda_\star}\,\frac{\nrmnmu{\nabla f}2^2}{\nrmnmu fp^2}\)\,.
\]
Moreover, there is some $\kappa>0$ such that
\be{kappa}
\nrmnmu{\nabla f}2^2-\frac{\lambda_\star}{2-p}\(\nrmnmu f2^2-\nrmnmu fp^2\)\ge\frac{\kappa\,\nrmnmu{\nabla f}2^4}{\nrmnmu{\nabla f}2^2+\frac{\lambda_\star}{2-p}\,\nrmnmu f2^2}\,.
\ee
\end{corollary}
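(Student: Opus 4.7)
The plan is to establish the two inequalities in sequence, with the second following from the first by an elementary estimate on $\psi$.

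For the first inequality, I would begin by rewriting it. Setting $t = (2-p)\,\nrmnmu{\nabla f}{2}^2/(\lambda_\star M)$ with $M = \nrmnmu{f}{p}^2$ and using the identity $\nrmnmu{\nabla f}{2}^2 = (\lambda_\star M/(2-p))\,t$, the definition $\psi(t) = t - (2-p)\varphi^{-1}(t)$ reduces the claim to
\[
\mathcal E[f] \le M\,\varphi^{-1}(t),\qquad\text{equivalently,}\qquad \nrmnmu{\nabla f}{2}^2 \ge \frac{\lambda_\star M}{2-p}\,\varphi(\mathcal E[f]/M).
\]
I would then revisit the carr\'e du champ ODE underlying the proof of Proposition~\ref{Prop:AD2005}: along the flow \eqref{flow} with $\beta = 1$, the quantity $\mathsf e(t) = (2-p)\mathcal E[w(t)]/M$ satisfies $\mathsf e'' + 2\lambda_\star\mathsf e' \ge (p-1)(\mathsf e')^2/(1+\mathsf e)$, with $\mathsf e(\infty) = \mathsf e'(\infty) = 0$. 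A Gronwall-type integration of this differential inequality, using an integrating factor chosen so that the resulting primitive involves $\varphi(\mathsf e/(2-p))$, would yield the sharpened conclusion $\mathsf e'(0) + 2\lambda_\star\varphi(\mathsf e(0)/(2-p)) \le 0$. Translating back via $\mathsf e'(0) = -2(2-p)\mathcal I[f]/M$ and $\mathsf e(0) = (2-p)\mathcal E[f]/M$ gives the desired refinement of Proposition~\ref{Prop:AD2005}.

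For the second inequality, the plan is to prove the elementary lower bound
\[
\psi(t) \ge \frac{(p-1)\,t^2}{2(2-p) + t}\qquad\text{for all}\;t \ge 0,
\]
which is consistent with $\psi(0) = \psi'(0) = 0$, $\psi''(0) = (p-1)/(2-p)$, and $\psi(t)/t \to p-1$ as $t \to +\infty$. I would verify this by forming $H(t) := (2(2-p) + t)\,\psi(t) - (p-1)\,t^2$, checking $H(0) = H'(0) = 0$, and showing $H''(t) \ge 0$ from the explicit formulas $\psi'(t) = 1 - (2-p)/\varphi'(\varphi^{-1}(t))$ and $\psi''(t) = -(2-p)(\varphi^{-1})''(t) \ge 0$ (concavity of $\varphi^{-1}$). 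Substituting this bound into the first inequality with $t = (2-p)\mathcal I[f]/(\lambda_\star M)$ and using the identity $\nrmnmu{f}{2}^2 = (2-p)\mathcal E[f] + M$ to match the denominator structure then gives the stated inequality with an explicit constant $\kappa > 0$ depending only on $p$ and $\lambda_\star$.

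The principal technical obstacle lies in the first step: the standard integrating factor $(1+\mathsf e)^{-(p-1)}$ used in the proof of Proposition~\ref{Prop:AD2005} produces only the coarser conclusion $\mathsf e'(0) + (2\lambda_\star/(2-p))\,\varphi(\mathsf e(0)) \le 0$, which, by concavity of $\varphi^{-1}$, yields a strictly weaker bound than the one stated in the corollary. Obtaining the sharper form therefore requires a more delicate rearrangement of the ODE -- exploiting the quadratic term $(\mathsf e')^2/(1+\mathsf e)$ more efficiently than in the original argument -- and this is where the bulk of the work would lie.
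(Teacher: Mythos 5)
The central gap is in your first step: you never actually prove the first inequality. The paper's proof is a two-line inversion of Proposition~\ref{Prop:AD2005}: with $M=\nrmnmu fp^2$, $\mathsf e=(2-p)\,\mathcal E[f]/M$ and $\mathsf i=(2-p)^2\,\mathcal I[f]/(\lambda_\star M)$, the inequality $\mathsf i\ge\varphi(\mathsf e)$ of Proposition~\ref{Prop:AD2005} gives $\mathsf e\le\varphi^{-1}(\mathsf i)$ since $\varphi$ is increasing, hence $\mathcal I[f]-\lambda_\star\,\mathcal E[f]=\tfrac{\lambda_\star M}{(2-p)^2}\,\big(\mathsf i-(2-p)\,\mathsf e\big)\ge\tfrac{\lambda_\star M}{(2-p)^2}\,\psi(\mathsf i)$; no new flow computation is made. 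You instead take the printed normalization literally, reduce it (correctly) to $\mathcal I[f]\ge\tfrac{\lambda_\star M}{2-p}\,\varphi\big(\mathcal E[f]/M\big)$, observe (correctly) that this is strictly stronger than Proposition~\ref{Prop:AD2005}, and then defer its proof to an unspecified ``more delicate rearrangement of the ODE''. That deferred step is the whole content of the corollary as you read it, so the proposal is incomplete precisely where it matters.

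Worse, that route cannot be completed, because the strengthened bound you are chasing is false in general: what you detected is an inconsistency of factors of $(2-p)$ between the statement and the proof, not a harder theorem. Take $d\mu=d\gamma$, $n=1$, $\lambda_\star=1$ and $f(x)=e^{\alpha x}$, so that $\nrmng f2^2=e^{2\alpha^2}$, $\nrmng fp^2=e^{p\,\alpha^2}$ and $\nrmng{\nabla f}2^2=\alpha^2 e^{2\alpha^2}$. As $\alpha\to0$ the deficit equals $\tfrac{2-p}2\,\alpha^4+O(\alpha^6)$, while $\tfrac M{2-p}\,\psi\big((2-p)\,\mathcal I[f]/M\big)=\tfrac{p-1}2\,\alpha^4+O(\alpha^6)$, so the inequality with your (the printed) normalization fails for every $p\in(3/2,2)$; the $(2-p)^2$-normalized version produced by the inversion argument is the one that holds, and it is all that is used later (the proof of~\eqref{kappa} and Lemma~\ref{Lem:Stab1} only need $\psi(t)\ge\kappa\,t^2/(1+t)$ with a $p$-dependent constant). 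For the second inequality your plan is essentially the paper's, which sets $\kappa:=\inf_{t>0}t^{-2}(1+t)\,\psi(t)$ and gets $\kappa>0$ from $\psi(0)=\psi'(0)=0$, $\psi''(0)=(p-1)/(2-p)>0$ and $\psi(t)\sim(p-1)\,t$ at infinity, then uses $\nrmnmu fp\le\nrmnmu f2$; note, though, that your explicit bound via $H(t)=(2\,(2-p)+t)\,\psi(t)-(p-1)\,t^2$ and ``$H''\ge0$'' is not immediate, since $H''(0)=0$ and $\psi'''\le0$, so convexity of $H$ does not follow from the facts you list.
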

\noindent The constant $\kappa$ depends only on $p$ and its value is estimated in the proof below.
\begin{proof} Let $M=\nrmnmu fp^2$. We deduce from Proposition~\ref{Prop:AD2005} that
\[
\mathsf i:=\frac{(2-p)^2}{\lambda_\star\,M}\,\mathcal I[f]\ge\varphi(\mathsf e)\quad\mbox{where}\quad\mathsf e:=\frac{(2-p)\,\mathcal E[f]}{\nrmnmu fp^2}\,,
\]
which is equivalent to $-\,\mathsf e\ge-\,\varphi^{-1}(\mathsf i)$, so that
\[
\mathsf i-(2-p)\,\mathsf e\ge\mathsf i-(2-p)\,\varphi^{-1}(\mathsf i)=\psi(\mathsf i)
\]
and, as a consequence,
\[
\mathcal I[f]-\lambda_\star\,\mathcal E[f]=\frac{\lambda_\star\,M}{(2-p)^2}\,\big(\mathsf i-(2-p)\,\mathsf e\big)\ge\frac{\lambda_\star\,M}{(2-p)^2}\,\psi(\mathsf i)=\frac{\lambda_\star\,M}{(2-p)^2}\,\psi\(\frac{2-p}{\lambda_\star\,M}\,\nrmnmu{\nabla f}2^2\)\,.
\]
Since $\varphi(s)\sim s$ as $s\to+\infty$, we deduce that $\psi(t)\sim(p-1)\,t$ as $t\to+\infty$. On the other hand, since
\[
\psi''(t)=(2-p)\,\frac{\varphi''(s)}{\big(\varphi'(s)\big)^3}\quad\mbox{with}\quad s=\varphi^{-1}(t)\,,
\]
we learn that $\psi''(0)=(p-1)/(2-p)>0$ and $t\mapsto\psi''(t)$ is non-increasing because $\varphi'(s)^5\,\psi'''(t)=\varphi'(s)\,\varphi'''(s)-\varphi''(s)^2<0$. This allows us to define
\[
\kappa:=\inf_{t>0}t^{-2}\,(1+t)\,\psi(t)\,.
\]
Using $\psi(0)=\psi'(0)=0$, we know that $\kappa>0$ and $\psi(t)\ge\kappa\,t^2/(1+t)$ concludes the proof using $\nrmnmu fp\le\nrmnmu f2$.
\end{proof}
\begin{remark} If $\phi(y)=|y|^2/2$ is the harmonic potential so that $d\mu=d\gamma$, then $\lambda=\lambda_\star=1$ is the optimal constant in~\eqref{Ineq:PLS} and the results of Proposition~\ref{Prop:AD2005} and Corollary~\ref{Cor:4thOrder} are both stability estimates. Even in the general case of a measure $d\mu$ as in~\eqref{dmu} such that~\eqref{BLW1} holds for some $\lambda_\star>0$, Proposition~\ref{Prop:AD2005} and Corollary~\ref{Cor:4thOrder} provide improvements to the basic inequality with a (generically non-optimal) constant $\lambda_\star$.
\end{remark}

\subsection{From the \emph{carr\'e du champ} method to Obata's theorem}

As a side result, we consider an improvement of the \emph{carr\'e du champ} method as in~\cite[Theorem~2.1]{MR1631581} or~\cite{MR3229793}, which goes as follows. Let us consider the optimal constant $\lambda_1>0$ in the Poincar\'e inequality
\be{Poincare:mu}
\irnmu{|\nabla w|^2}\ge\lambda_1\irnmu{|w-\bar w|^2}\quad\forall\,w\in\mathrm H^1(\R^n,d\mu)\,,
\ee
where $\bar w:=\irnmu w$. By expanding $\irnmu{\big(\mathcal Lw+\lambda_1\,(w-\bar w)\big)^2}\ge0$, we obtain
\be{lambda1}
\irnmu{(\mathcal Lw)^2}\ge\lambda_1\irnmu{|\nabla w|^2}\,.
\ee
On the other hand, by the computation of Section~\ref{Sec:Improved} with $\beta=p=1$, we know that
\[
-\frac12\,\frac d{dt}\irnmu{\(|\nabla w|^2+\frac{\lambda_\star}{p-2}\,w^2\)}=\irnmu{(\mathcal Lw)^2}-\lambda_\star\irnmu{|\nabla w|^2}=\mathcal Q_1[w]\ge0\,.
\]
which proves, for instance using~\eqref{ResQbeta}, that
\[
\lambda_1\ge\lambda_\star\,.
\]
This result goes back to the work of Obata and the \emph{carr\'e du champ} method is in fact very close to the spirit of the historical proof: see~\cite[p.~327]{MR303464}.
\begin{lemma}\label{Lem:Veron-Licois} Assume that $n\ge1$, $p\in[1,2)$ and consider the measure $d\mu$ as in~\eqref{dmu} such that~\eqref{BLW1} holds for some $\lambda_\star>0$. Then~\eqref{Interpolation} holds with 
\be{lambda-lambdastar}
\lambda=(2-p)\,\lambda_1+(p-1)\,\lambda_\star\,.
\ee
As a consequence of~\eqref{lambda-lambdastar}, we have $\lambda\ge\lambda_\star$ with equality for the optimal value of $\lambda$ in~\eqref{Interpolation} if and only if $\lambda_\star=\lambda_1$ and $\phi(y)=\lambda_\star\,|y-y_0|^2/2$ for some $y_0\in\R^n$.
\end{lemma}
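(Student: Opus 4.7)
The plan is to revisit the \emph{carr\'e du champ} computation underlying Theorem~\ref{Thm:BE-logConcave} in the linear case $\beta=1$ (equivalently $m=1$ and $\kappa=p-1$), and to sharpen it by mixing the Bochner--Lichnerowicz identity with the spectral bound~\eqref{lambda1} that comes from the Poincar\'e inequality. Tracking the dependence on an arbitrary $\lambda\in\R$ through that computation, together with the integration by parts
\begin{equation*}
\irnmu{(\mathcal Lw)\,\tfrac{|\nabla w|^2}w}=-\,2\irnmu{\mathrm{Hess}\,w:\tfrac{\nabla w\otimes\nabla w}w}+\irnmu{\tfrac{|\nabla w|^4}{w^2}}\,,
\end{equation*}
gives, for any positive solution $w$ of~\eqref{flow} with $\beta=1$,
\begin{equation*}
-\tfrac12\,\tfrac{d}{dt}\Big(\nrmnmu{\nabla w}2^2-\tfrac\lambda{2-p}\big(\nrmnmu w2^2-\nrmnmu wp^2\big)\Big)=\irnmu{(\mathcal Lw)^2}+(p-1)\irnmu{(\mathcal Lw)\,\tfrac{|\nabla w|^2}w}-\lambda\irnmu{|\nabla w|^2}\,.
\end{equation*}

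The key step is to bound $\irnmu{(\mathcal Lw)^2}$ from below by the convex combination, with weight $\theta\in[0,1]$, of Bochner--Lichnerowicz (contributing $\theta\,\lambda_\star\irnmu{|\nabla w|^2}$ plus a Hessian square) and~\eqref{lambda1} (contributing $(1-\theta)\,\lambda_1\irnmu{|\nabla w|^2}$). Completing the square in the form $\theta\,\big\|\mathrm{Hess}\,w-\tfrac{p-1}\theta\,\tfrac{\nabla w\otimes\nabla w}w\big\|^2$ exposes a residual coefficient $(p-1)-(p-1)^2/\theta$ in front of $\irnmu{|\nabla w|^4/w^2}$ and a coefficient $\theta\lambda_\star+(1-\theta)\lambda_1-\lambda$ in front of $\irnmu{|\nabla w|^2}$. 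The natural choice $\theta=p-1\in[0,1]$ annihilates the residual, and, since $\lambda_1\ge\lambda_\star$ makes $\theta\lambda_\star+(1-\theta)\lambda_1$ largest over the admissible range $\theta\ge p-1$, the whole right-hand side is non-negative precisely for $\lambda=(2-p)\,\lambda_1+(p-1)\,\lambda_\star$. Monotonicity of the corresponding deficit along the flow, combined with convergence of $w(t,\cdot)$ to a constant as $t\to+\infty$ so that the deficit vanishes at infinity, then yields~\eqref{Interpolation} with this value of $\lambda$, which is exactly~\eqref{lambda-lambdastar}.

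The bound $\lambda\ge\lambda_\star$ is immediate from $\lambda_1\ge\lambda_\star$. For the equality case, the assumption that $\lambda_\star$ is optimal in~\eqref{Interpolation} combined with~\eqref{lambda-lambdastar} forces $(2-p)\,\lambda_1+(p-1)\,\lambda_\star\le\lambda_\star$, hence $\lambda_1=\lambda_\star$. The rigidity on $\phi$ then follows from the classical Obata-type analysis of equality in Bochner--Lichnerowicz: any Poincar\'e extremizer $w$ with $\mathcal Lw=-\lambda_\star w$ must saturate both the Bochner inequality and the integrated spectral bound, forcing $\mathrm{Hess}\,w\equiv0$ (so $w$ is affine, $w=a\cdot y+b$) and $\mathrm{Hess}\,\phi\cdot a\equiv\lambda_\star\,a$ a.e.; tracking this along enough independent extremizing directions, together with the convexity constraint $\mathrm{Hess}\,\phi\ge\lambda_\star\,\mathrm{Id}$, pins $\phi$ down as $\lambda_\star|y-y_0|^2/2$ up to translation, while the converse direction is the sharp Gaussian interpolation of~\cite{MR954373}. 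The main obstacle is precisely this last rigidity step: the convex-combination argument for the inequality itself is clean, but promoting $\lambda_1=\lambda_\star$ to a global statement on $\phi$ requires controlling the full first eigenspace of $\mathcal L$ rather than just one extremizing direction.
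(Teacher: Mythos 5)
Your proposal is correct and follows essentially the paper's route: in the linear case $\beta=1$ you split $\irnmu{(\mathcal Lw)^2}$ into a $\theta$-weighted combination of the Bochner/Bakry--Emery bound and the spectral bound~\eqref{lambda1}, complete the square in $\mathrm{Hess}\,w$, and the choice $\theta=p-1$ annihilating the $|\nabla w|^4/w^2$ residual yields $\lambda=(2-p)\,\lambda_1+(p-1)\,\lambda_\star$ --- which is exactly the paper's computation carried out for general $\beta\in[\beta_-(p),\beta_+(p)]$, whose optimization of $\theta(\beta)$ anyway selects $\beta=1$, $\theta(1)=p-1$. Your treatment of the equality case also matches the paper's (a Poincar\'e extremizer forces $\mathcal Q_1[w]=0$, hence $w$ affine and $(\mathrm{Hess}\,\phi-\lambda_\star\,\mathrm{Id})\,a=0$ along the extremizing direction), and the obstacle you flag --- upgrading this single-direction information to $\mathrm{Hess}\,\phi\equiv\lambda_\star\,\mathrm{Id}$, i.e.\ to $\phi(y)=\lambda_\star\,|y-y_0|^2/2$ --- is precisely the point the paper itself does not argue in detail within the proof and defers, in the remark following the lemma, to tensorization arguments and the cited references.
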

\begin{proof} The \emph{carr\'e du champ} method applied as in Section~\ref{Sec:Improved} with $p=1$ shows that $\lambda_1\ge\lambda_\star$. Coming back to the computations of Section~\ref{Sec:Improved}, we can rearrange the integrals in the expression $\mathcal Q_\beta[w]$ differently and get
\begin{align*}
&\hspace*{-18pt}-\frac1{2\,\beta^2}\,\frac d{dt}\irnmu{\(\big|\nabla w^\beta\big|^2+\frac\lambda{p-2}\,w^{2\beta}\)}\\
&=(1-\theta)\irnmu{(\mathcal Lw)^2}-\lambda\irnmu{|\nabla w|^2}\\
&\hspace*{1cm}+\theta\irnmu{(\mathcal Lw)^2}+(\kappa+\beta-1)\irnmu{(\mathcal Lw)\,\frac{|\nabla w|^2}w}+\kappa\,(\beta-1)\irnmu{\frac{|\nabla w|^4}{w^2}}\\
&=\big((1-\theta)\,\lambda_1+\theta\,\lambda_\star-\lambda\big)\irnmu{|\nabla w|^2}\\
&\hspace*{1cm}+\theta\(\irnmu{\|\mathrm{Hess}w\|^2}+\lambda_\star\irnmu{|\nabla w|^2}\)\\
&\hspace*{1cm}-2\,(\kappa+\beta-1)\irnmu{\mathrm{Hess}w:\frac{\nabla w\otimes\nabla\,w}w}+\big(\kappa\,(\beta-1)+\kappa+\beta-1\big)\irnmu{\frac{|\nabla w|^4}{w^2}}
\end{align*}
where the $(1-\theta)\,\lambda_1$ term factor from~\eqref{lambda1}. With the choice of $\theta$ such that
\[
(\kappa+\beta-1)^2-\theta\,\big(\kappa\,(\beta-1)+\kappa+\beta-1\big)=0\,,
\]
which means $\theta=\theta(\beta)$ with
\[
\theta(\beta):=\frac{(p-1)^2\,\beta^2}{(p-2)\,\beta^2+2\,\beta-1}\,,
\]
we can write that
\begin{multline*}
\theta\irnmu{\|\mathrm{Hess}w\|^2}-2\,(\kappa+\beta-1)\irnmu{\mathrm{Hess}w:\frac{\nabla w\otimes\nabla\,w}w}+\big(\kappa\,(\beta-1)+\kappa+\beta-1\big)\irnmu{\frac{|\nabla w|^4}{w^2}}\\=
\theta\irnmu{\left\|\mathrm{Hess}w-\frac{\beta\,(p-1)}\theta\,\frac{\nabla w\otimes\nabla\,w}w\right\|^2}\ge0\,.
\end{multline*}
Altogether, we have shown that
\[
\frac d{dt}\irnmu{\(\big|\nabla w^\beta\big|^2+\frac\lambda{p-2}\,w^{2\beta}\)}\le0
\]
if $\lambda=(1-\theta)\,\lambda_1+\theta\,\lambda_\star$. Recall that $\nrmnmu{w^\beta}p$ is independent of $t$ so that the deficit functional associated to~\eqref{Interpolation} is monotone non-increasing. Also notice that $\theta\big(\beta_\pm(p)\big)=1$ for any $p\in[1,2)$. The observation that $\min_{\beta\in[\beta_-(p),\beta_+(p)]}\theta(\beta)=\theta(1)=p-1$ completes the proof of~\eqref{lambda-lambdastar}.

With $\beta=1$ and $p=1$, the computation in the proof of Theorem~\ref{Thm:BE-logConcave} shows that, if the initial datum $w(t=0,\cdot)$ is an optimal function for the Poincar\'e inequality~\eqref{Poincare:mu}, then
\[
0=\frac d{dt}\irnmu{\(|\nabla w|^2-\lambda_\star\,w^2\)}=-\,2\(\mathcal Q_1[w]+\irnmu{\(\mathrm{Hess}\,\phi-\lambda_\star\mathrm{Id}\):\nabla w\otimes\nabla w}\)
\]
at $t=0$. Here we keep all terms and in particular do not use the fact that $\mathrm{Hess}\,\phi-\lambda_\star\mathrm{Id}\ge0$ a.e.~in the sense of positive matrices. Since $\nabla w\neq0$ a.e.~and $\mathcal Q_1[w]\ge0$, we find that $\mathrm{Hess}\,\phi-\lambda_\star\mathrm{Id}=0$ a.e. This completes the proof in the equality case $\lambda_\star=\lambda_1$.\end{proof}

\begin{remark} The proof of Lemma~\ref{Lem:Veron-Licois} is reminiscent of~\cite{MR1631581,MR3229793}. The result when $\lambda_\star=\lambda_1$ points in the direction of \emph{Obata's theorem} (also known as the \emph{Obata–Lichnerowicz theorem}) and in some sense, it is the analogue for Gaussian measures of the result of~\cite[p.~135]{MR0124009} (also see for instance~\cite[p.~179]{MR0282313}) on the sphere. The case $\lambda_\star=\lambda_1$ is easy to understand in dimension $d=1$: with $\beta=1$ and $p=1$, we apply the computation of the proof of Lemma~\ref{Lem:Veron-Licois} to a function $u$ in the eigenspace associated with $\lambda_1$ and obtain that $u''=0$ almost everywhere. This means that $u(y)=a\,y+b$ for some real constants $a\neq0$ and $b$, and there is no loss of generality if we take $a=1$. Using now the eigenvalue equation $\mathcal Lu+\lambda_1\,u=0$, we read that $\phi'(y)=\lambda_1\,(y-b)$, which means that $\phi$ is a harmonic potential. In higher dimensions, one has to remember that Inequality~\eqref{Interpolation} can be tensorized on product spaces: see for instance~\cite{MR1796718,MR2081075,MR3884261}. This is however responsible for some technicalities, which are dealt with in greatest generality, \emph{e.g.}, in~\cite{MR4150652}. \end{remark}

\subsection{Improved inequalities under orthogonality conditions}\label{Sec:ImprovedSpectral}

Let $\Pi_1$ be the $\mathrm L^2(\R^n,d\gamma)$orthogonal projection onto the space generated by the constants and the coordinate functions, corresponding to the Hermite polynomials of order less or equal than $1$. The following result was recently proved in~\cite[Appendix A]{BDS2022-Sphere} on the basis of Nelson's hypercontractivity estimate in~\cite[Theorem~3]{MR0343816} and its relation with Gross' logarithmic Sobolev inequality in~\cite{MR420249} (also see~\cite{MR1796718,MR2375056} for earlier results).
\begin{proposition}\label{orth}
Let $n\ge1$ and $p\in[1,2)$. For any $f\in\mathrm H^1(\R^n,d\gamma)$, we have
\[
\nrmng{\nabla f}2^2-\frac 1{p-2}\(\nrmng fp^2-\nrmng f2^2\)\ge\frac12\,(2-p)\,\nrmng{\nabla(\mathrm{Id}-\Pi_1)f}2^2\,.
\]
\end{proposition}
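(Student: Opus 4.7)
The plan is to combine Nelson's hypercontractivity of the Ornstein--Uhlenbeck semigroup $P_t=e^{t\mathcal L}$ with the spectral decomposition of $f$ in Hermite polynomials, and then reduce the claim to a pointwise inequality in the eigenvalue index.

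First I would apply Nelson's theorem in the form $\nrmng{P_t g}{q}\le \nrmng{g}{p}$ for $1<p\le q$ and $e^{2t}\ge (q-1)/(p-1)$, extended to signed $g$ via $|P_tg|\le P_t|g|$. Taking $q=2$ and $t_0:=-\tfrac12\log(p-1)$ gives $\nrmng{f}{p}^2\ge \nrmng{P_{t_0}f}{2}^2$. Next, I would decompose $f=\sum_{k\ge 0}f_k$ on the Hermite eigenspaces $N_k=\ker(\mathcal L+k\,\mathrm{Id})$, so that $\Pi_1 f=f_0+f_1$ and, by Parseval and integration by parts,
\[
\nrmng{f}{2}^2=\sum_{k\ge 0}\nrmng{f_k}{2}^2,\quad \nrmng{\nabla f}{2}^2=\sum_{k\ge 0}k\,\nrmng{f_k}{2}^2,\quad \nrmng{\nabla(\mathrm{Id}-\Pi_1)f}{2}^2=\sum_{k\ge 2}k\,\nrmng{f_k}{2}^2.
\]
Since $P_{t_0}$ acts on $N_k$ by multiplication by $e^{-kt_0}=(p-1)^{k/2}$, the hypercontractive estimate recasts as
\[
\nrmng{f}{2}^2-\nrmng{f}{p}^2 \le \sum_{k\ge 0}\bigl(1-(p-1)^k\bigr)\,\nrmng{f_k}{2}^2.
\]

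Plugging this bound into the deficit and observing that the contributions of $k=0$ and $k=1$ vanish identically, one is left with
\[
(2-p)\,\nrmng{\nabla f}{2}^2-\bigl(\nrmng{f}{2}^2-\nrmng{f}{p}^2\bigr) \ge \sum_{k\ge 2}\Bigl(k(2-p)-1+(p-1)^k\Bigr)\,\nrmng{f_k}{2}^2.
\]
The argument would then conclude by proving the scalar inequality
\[
k(2-p)-1+(p-1)^k \ge \tfrac12\,k\,(2-p)^2 \quad\text{for every integer } k\ge 2 \text{ and } p\in[1,2],
\]
and identifying $\sum_{k\ge 2}k\,\nrmng{f_k}{2}^2$ with $\nrmng{\nabla(\mathrm{Id}-\Pi_1)f}{2}^2$ after dividing by $2-p$.

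The only genuinely computational step is the scalar inequality above, and it is elementary: denoting by $\tilde\psi_k(p)$ the difference of its two sides, one finds $\tilde\psi_k(2)=0$, $\tilde\psi_k(1)=k/2-1\ge 0$, and $\tilde\psi_k'(p)=k(p-1)\bigl((p-1)^{k-2}-1\bigr)\le 0$ on $(1,2)$, so $\tilde\psi_k\ge 0$ throughout $[1,2]$. A notable feature is that $\tilde\psi_2\equiv 0$: equality already holds at the Hermite level $k=2$, which explains why the constant $(2-p)/2$ in the statement is optimal. The endpoint $p=1$ is handled either by continuity as $p\to 1^+$ or directly by Jensen's inequality, which replaces Nelson in the limit $t_0\to+\infty$.
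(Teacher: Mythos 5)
Your proof is correct and follows essentially the same route as the one the paper relies on: Proposition~\ref{orth} is quoted from Appendix~A of \cite{BDS2022-Sphere}, where it is likewise obtained from Nelson's hypercontractivity bound $\nrmng fp^2\ge\nrmng{P_{t_0}f}2^2$ with $e^{-2t_0}=p-1$, the Hermite (Ornstein--Uhlenbeck) spectral decomposition, and the elementary coefficient inequality $k\,(2-p)-1+(p-1)^k\ge\tfrac12\,k\,(2-p)^2$ for $k\ge2$, with the endpoint $p=1$ recovered by continuity. Only your closing remark overstates matters: the identity $\tilde\psi_2\equiv0$ shows the constant $\tfrac12\,(2-p)$ is the best this particular method yields, not that it is optimal in the inequality itself --- the paper explicitly notes that this improvement is not optimal.
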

\noindent Compared to~\eqref{Ineq:PLS}, this result provides us with an \emph{improved entropy--entropy production inequalities} under orthogonality conditions. As noted in~\cite{BDS2022-Sphere}, such an improvement is not optimal. There are other possible approaches. For instance, a finer analysis of entropy methods has been used in~\cite{MR3640894} on the sphere, that could probably be adapted to the case of the Gaussian measure. Alternatively, one could use the convex interpolation of~\cite{MR1796718}, with the possible advantage that the result would not degenerate in the limit as $p\to2$ using the recent stability result of~\cite[Theorem~2]{DEFFL}.

\section{Stability results for the Gaussian measure in the subcritical range}\label{Sec:Stability}

The whole Section is devoted to the proof of Theorem~\ref{stability-subcritical-Gaussian}. We split it into four lemmas. The key estimate is obtained in Lemma~\ref{Lem:Stab4}. 

\vspace*{4pt}\noindent\textcircled1 Let us start with the easy case, far away from the optimizers of~\eqref{Ineq:PLS} in the sense that for some $\theta>0$, we assume
\be{Far}
\nrmng{\nabla f}2^2\ge\theta\,\nrmng f2^2\,.
\ee
By homogenity of the inequalities, we can fix $\nrmng f2^2=1$ without loss of generality.
\begin{lemma}\label{Lem:Stab1} Let $n\ge1$ and $\theta\in(0,1)$. For any function $f\in\mathrm H^1(\R^n,d\gamma)$ such that $\nrmng f2=1$ and \hbox{$\nrmng{\nabla f}2^2\ge\theta$}, we have the estimate
\[
\nrmng{\nabla f}2^2-\frac1{2-p}\(\nrmng f2^2-\nrmng fp^2\)\ge\kappa_\star(\theta)\,\nrmng{\nabla f}2^2
\]
\end{lemma}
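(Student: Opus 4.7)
The natural strategy is to invoke the fourth-order improvement of Corollary~\ref{Cor:4thOrder}, specialised to the Gaussian measure $d\mu=d\gamma$, which corresponds to $\lambda_\star=1$. That corollary yields, for every $f\in\mathrm H^1(\R^n,d\gamma)$,
\[
\nrmng{\nabla f}2^2-\frac1{2-p}\(\nrmng f2^2-\nrmng fp^2\)\ge\frac{\kappa\,\nrmng{\nabla f}2^4}{\nrmng{\nabla f}2^2+\frac1{2-p}\,\nrmng f2^2}\,,
\]
with an explicit constant $\kappa=\kappa(p)>0$ that depends only on $p$ (and in particular is independent of $n$).

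Under the normalisation $\nrmng f2=1$, the conclusion of Lemma~\ref{Lem:Stab1} follows from this estimate as soon as one controls
\[
\frac{\kappa\,\nrmng{\nabla f}2^2}{\nrmng{\nabla f}2^2+\frac1{2-p}}\ge\kappa_\star(\theta)\,.
\]
Since the scalar map $x\mapsto\kappa\,x/\big(x+\tfrac1{2-p}\big)$ is monotone increasing on $(0,+\infty)$, the hypothesis~\eqref{Far}, which after normalisation reads $\nrmng{\nabla f}2^2\ge\theta$, yields this lower bound with the explicit choice
\[
\kappa_\star(\theta):=\frac{\kappa\,\theta\,(2-p)}{1+\theta\,(2-p)}\,,
\]
which is strictly positive for every $\theta>0$ and $p\in[1,2)$.

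No genuine obstacle is anticipated here: once the fourth-order improvement of Section~\ref{Sec:ImprovedBE} is available, the proof amounts to a one-line monotonicity observation for a scalar function of a single variable. The only conceptual point worth recording is that the denominator $\nrmng{\nabla f}2^2+\frac1{2-p}\,\nrmng f2^2$ in Corollary~\ref{Cor:4thOrder} is exactly what makes the deficit grow \emph{linearly} in $\nrmng{\nabla f}2^2$ for large gradients, which is precisely the scaling that Lemma~\ref{Lem:Stab1} requires in the \emph{far-from-optimisers} regime; away from the optimisers one cannot hope for anything better than a linear lower bound in $\nrmng{\nabla f}2^2$, and this is what Corollary~\ref{Cor:4thOrder} delivers.
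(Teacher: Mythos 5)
Your proposal is correct and is essentially the paper's own route: the authors state that Lemma~\ref{Lem:Stab1} ``directly follows from~\eqref{kappa}'', which is exactly your deduction from Corollary~\ref{Cor:4thOrder} with $\lambda_\star=1$, the normalisation $\nrmng f2=1$, and the monotonicity of $x\mapsto\kappa\,x/(x+\tfrac1{2-p})$, yielding the admissible explicit constant $\kappa_\star(\theta)=\kappa\,\theta\,(2-p)/\big(1+\theta\,(2-p)\big)$. The only differences are cosmetic: the displayed proof in the paper instead uses the first ($\psi$-based) inequality of Corollary~\ref{Cor:4thOrder} together with convexity of $\psi$ to exhibit the constant $\kappa_\star(\theta)=\tfrac12\,(2-p)\,\psi\big((2-p)\,\theta\big)\,\theta$, and your parenthetical range $p\in[1,2)$ should be $p\in(1,2)$, since Corollary~\ref{Cor:4thOrder} (via Proposition~\ref{Prop:AD2005}) requires $p>1$ and the constant $\kappa$ degenerates as $p\to1$.
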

\noindent In case~\eqref{Far}, this already proves the result of Theorem~\ref{stability-subcritical-Gaussian} with $c_{n,p}\le c_{n,p}^{(1)}:=\kappa_\star(\theta)$. The result of Lemma~\ref{Lem:Stab1} directly follows from~\eqref{kappa}, but we give a proof based on the convexity properties of the entropy, which directly explains where the constant comes from.
\begin{proof} From Corollary~\ref{Cor:4thOrder} with $\lambda_\star=1$ and H\"older's inequality, we obtain
\begin{multline*}
\nrmng{\nabla f}2^2-\frac1{2-p}\(\nrmng f2^2-\nrmng fp^2\)\\
\ge\frac{\nrmng fp^2}{2-p}\,\frac12\,\psi\((2-p)\,\theta\)\((2-p)\,\frac{\nrmng{\nabla f}2^2}{\nrmng fp^2}\)^2\\
\ge\frac12\,(2-p)\,\psi\((2-p)\,\theta\)\,\frac{\nrmng{\nabla f}2^4}{\nrmng fp^2}\ge\kappa_\star(\theta)\,\nrmng{\nabla f}2^2
\end{multline*}
with $\kappa_\star(\theta):=\frac12\,(2-p)\,\psi\((2-p)\,\theta\)\,\theta$ because $t\mapsto\psi''(t)$ is non-increasing, $\nrmng{\nabla f}2^2\ge\theta$ and $\nrmng fp^2\le\nrmng f2^2$.\end{proof}

\noindent\textcircled2 From now on we work in a neighbourhood of the constants which, by homogeneity of the inequalities, is defined as
\[
\nrmng{\nabla f}2^2\le\theta\,\nrmng f2^2\,.
\]
With $\theta>0$ small, we claim that $\irng f$ is close to $1$ if $\nrmng f2=1$.
\begin{lemma}\label{Lem:Stab2} Let $n\ge1$ and $\theta\in(0,1)$. For any non-negative function
\be{Neighbourhood}
f\in\mathrm H^1(\R^n,d\gamma)\quad\mbox{such that}\quad\nrmng f2=1\quad\mbox{and}\quad\nrmng{\nabla f}2^2\le\theta\,,
\ee
we have the estimate
\[
\sqrt{1-\theta}\le\irng f\le1\,.
\]
\end{lemma}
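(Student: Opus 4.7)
The upper bound is immediate from the Cauchy--Schwarz inequality applied against the constant function $1$ on the probability space $(\R^n,d\gamma)$:
\[
\irng f \le \(\irng{f^2}\)^{1/2}\(\irng 1\)^{1/2} = \nrmng f2 = 1.
\]
So the content of the lemma is the lower bound, and the plan is to derive it directly from the Gaussian Poincar\'e inequality (which is~\eqref{Ineq:PLS} with $p=1$).

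\textbf{Main step.} Apply the Poincar\'e inequality on $(\R^n,d\gamma)$,
\[
\irng{\Big(f-\irng f\Big)^2} \le \nrmng{\nabla f}2^2,
\]
and expand the left-hand side using $\nrmng f2^2=1$ to get
\[
1 - \(\irng f\)^2 \le \nrmng{\nabla f}2^2 \le \theta.
\]
Thus $\(\irng f\)^2 \ge 1-\theta$. Since $f\ge0$ implies $\irng f \ge 0$, we may take the positive square root and conclude $\irng f \ge \sqrt{1-\theta}$.

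\textbf{Obstacle.} There is essentially no obstacle: the proof is two lines once one identifies the correct tool (the Gaussian Poincar\'e inequality, which is available here for free as the $p=1$ endpoint of~\eqref{Ineq:PLS}). The only small point worth flagging is the use of $f\ge0$ to pass from the squared inequality to the square-root inequality with the correct sign; without this assumption one would only obtain $\big|\irng f\big|\ge\sqrt{1-\theta}$.
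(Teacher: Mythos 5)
Your proof is correct and follows essentially the same route as the paper: the lower bound comes from the Gaussian Poincar\'e inequality via $1=\nrmng f2^2=\irng{|f-\bar f|^2}+\bar f^2\le\theta+\bar f^2$ with $\bar f=\irng f\ge0$, and the upper bound from Cauchy--Schwarz. Your remark that nonnegativity of $f$ is what fixes the sign when taking the square root is a correct and worthwhile observation.
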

\begin{proof} With $\bar f:=\irng f$, the result follows from the Gaussian Poincar\'e inequality according to
\[
1=\nrmng f2^2=\irng{|f-\bar f|^2}+\bar f^2\le\nrmng{\nabla f}2^2+\bar f^2\le\theta+\bar f^2\,.
\]
\end{proof}

\noindent\textcircled{3} Assume that $f$ is as in~\eqref{Neighbourhood} and let us decompose $u_f(x):=f(x)/\irng f$ as
\[
u_f(x)=1+\varepsilon\,x\cdot\nu+\eta\,r(x)
\]
where $\nu\in\S^{n-1}$ is such that $\varepsilon\,\nu=\irng{x\,u_f(x)}$ with $\varepsilon>0$, $\eta$ is a positive number and $r$ is a function in $\mathrm H^1(\R^n,d\gamma)\cap(\mathrm{Id}-\Pi_1)\mathrm L^2(\R^n,d\gamma)$ such that $\nrmng{\nabla r}2=1$ and $\nrmng r2\le1/2$ by the Gaussian Poincar\'e inequality after taking into account the additional orthogonality condition $\irng{r\,x_i}=0$ for any $i=1$, $2$,\ldots,$n$.
\begin{lemma}\label{Lem:Stab3} Let $n\ge1$ and $\theta\in(0,1)$. Let $f\in\mathrm H^1(\R^n,d\gamma)$ be such that~\eqref{Neighbourhood} holds. With the above notation, we have
\[
\nrmng{u_f}2^2=1+\varepsilon^2+\eta^2\,\nrmng r2^2\le1+\theta\quad\mbox{and}\quad\nrmng{\nabla u_f}2^2=\varepsilon^2+\eta^2\le\frac\theta{1-\theta}
\]
and, if $\,\eta>t\,\varepsilon^2$ for some $t>0$, then
\[
\nrmng{\nabla u_f}2^2-\frac1{2-p}\(\nrmng{u_f}2^2-\nrmng{u_f}p^2\)\ge\frac14\,(2-p)\(\eta^2+\frac{t^2\,\varepsilon^4}{1+\varepsilon^2+\eta^2}\)\,.
\]
\end{lemma}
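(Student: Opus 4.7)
My plan is first to unpack the two identities from $\mathrm L^2(d\gamma)$-orthogonal decompositions, then derive the numerical bounds from Lemma~\ref{Lem:Stab2}, and finally obtain the main estimate by averaging two equivalent lower bounds coming from Proposition~\ref{orth}.

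For the two identities I would use that the three summands in $u_f = 1 + \varepsilon\,x\cdot\nu + \eta\,r$ are mutually orthogonal in $\mathrm L^2(\R^n,d\gamma)$: the constant is orthogonal to $x\cdot\nu$ because $\irng{x\cdot\nu}=0$, and $r$ is orthogonal to $1$ and to each coordinate function by the assumption $r\in(\mathrm{Id}-\Pi_1)\mathrm L^2(\R^n,d\gamma)$. Combined with $\nrmng{x\cdot\nu}2^2=|\nu|^2=1$, expansion of the square gives $\nrmng{u_f}2^2 = 1+\varepsilon^2+\eta^2\,\nrmng r2^2$. For the gradient, $\nabla u_f = \varepsilon\,\nu + \eta\,\nabla r$, and the cross term $2\,\varepsilon\,\eta\,\nu\cdot\irng{\nabla r}$ vanishes by the integration-by-parts identity $\irng{\partial_i r}=\irng{r\,x_i}=0$, leaving $\nrmng{\nabla u_f}2^2 = \varepsilon^2 + \eta^2\,\nrmng{\nabla r}2^2 = \varepsilon^2 + \eta^2$. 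For the two upper bounds I would use $u_f=f/\bar f$, so that $\nrmng{\nabla u_f}2^2 = \nrmng{\nabla f}2^2/\bar f^2$ and $\nrmng{u_f}2^2=1/\bar f^2$; the bound $\nrmng{\nabla u_f}2^2 \le \theta/(1-\theta)$ then follows immediately from $\bar f^2\ge 1-\theta$ (Lemma~\ref{Lem:Stab2}), while the bound $\nrmng{u_f}2^2\le 1+\theta$ is obtained by combining the identity $\nrmng{u_f}2^2 = 1+\varepsilon^2+\eta^2\,\nrmng r2^2$ with the second-order Poincar\'e estimate $\nrmng r2^2\le\tfrac12\,\nrmng{\nabla r}2^2=\tfrac12$, available because $r\in(\mathrm{Id}-\Pi_1)\mathrm L^2(\R^n,d\gamma)$.

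The main estimate is then obtained in a single move. Applying Proposition~\ref{orth} to $u_f$ and using that $(\mathrm{Id}-\Pi_1)\,u_f = \eta\,r$ with $\nrmng{\nabla r}2=1$, I get
\[
\mathcal D[u_f]:=\nrmng{\nabla u_f}2^2-\frac1{2-p}\(\nrmng{u_f}2^2-\nrmng{u_f}p^2\)\ge\frac{2-p}{2}\,\eta^2\,.
\]
The hypothesis $\eta>t\,\varepsilon^2$ then yields $\eta^2>t^2\,\varepsilon^4$, whence $\eta^2\,(1+\varepsilon^2+\eta^2)\ge\eta^2>t^2\,\varepsilon^4$, so the same inequality can be rewritten in the equivalent, weaker form
\[
\mathcal D[u_f]\ge\frac{2-p}{2}\,\frac{t^2\,\varepsilon^4}{1+\varepsilon^2+\eta^2}\,.
\]
Averaging these two valid lower bounds on $\mathcal D[u_f]$ with weights $\tfrac12,\tfrac12$ produces exactly the claimed
\[
\mathcal D[u_f]\ge\frac{2-p}{4}\(\eta^2+\frac{t^2\,\varepsilon^4}{1+\varepsilon^2+\eta^2}\)\,.
\]

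There is essentially no hard step here: the substantive input, the orthogonality improvement of the deficit, has already been performed in Proposition~\ref{orth}, and the hypothesis $\eta>t\,\varepsilon^2$ is tailored precisely so that the $\eta^2$-term dominates, up to the factor $(1+\varepsilon^2+\eta^2)^{-1}$, the desired $\varepsilon^4$-contribution. The genuine difficulty of Theorem~\ref{stability-subcritical-Gaussian} must lie in the complementary regime $\eta\le t\,\varepsilon^2$, where the linear-mode deficit is too small to compensate the $\varepsilon^4$ target and a Taylor expansion up to fourth order, combined with the improved carr\'e-du-champ inequalities of Section~\ref{Sec:Nonlinear}, has to be performed; that is presumably the content of the forthcoming Lemma~\ref{Lem:Stab4}.
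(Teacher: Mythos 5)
Your handling of the deficit estimate is exactly the paper's argument: Proposition~\ref{orth} applied to $u_f$, with $(\mathrm{Id}-\Pi_1)u_f=\eta\,r$ and $\nrmng{\nabla r}2=1$, gives $\mathcal D[u_f]\ge\tfrac12\,(2-p)\,\eta^2$, and the paper then concludes via the elementary chain $\eta^2\ge\tfrac12\,\big(\eta^2+t^2\,\varepsilon^4\big)\ge\tfrac12\,\big(\eta^2+\tfrac{t^2\,\varepsilon^4}{1+\varepsilon^2+\eta^2}\big)$, which is precisely your ``average of two lower bounds'' phrased differently. The orthogonality computations behind the two identities (which the paper leaves implicit), including the integration by parts $\irng{\partial_i r}=\irng{r\,x_i}=0$ for the gradient cross term, and the bound $\varepsilon^2+\eta^2=\nrmng{\nabla f}2^2/\bar f^2\le\theta/(1-\theta)$ obtained from Lemma~\ref{Lem:Stab2}, are all as intended.

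The one step that does not hold as written is your derivation of $\nrmng{u_f}2^2\le1+\theta$. The improved Poincar\'e estimate $\nrmng r2^2\le\tfrac12$ only gives $\varepsilon^2+\eta^2\,\nrmng r2^2\le\varepsilon^2+\tfrac12\,\eta^2$, and the only available control, $\varepsilon^2+\eta^2\le\theta/(1-\theta)$, does not force this below $\theta$: take $\eta=0$ and $\varepsilon^2=\theta/(1-\theta)$. The route you use for the gradient is the one that works here too and yields $\nrmng{u_f}2^2=1/\bar f^2\le1/(1-\theta)=1+\theta/(1-\theta)$; this is essentially sharp (for $f$ close to $\sqrt{1-\theta}+\sqrt\theta\,x_1$, corrected to remain non-negative, one gets $\nrmng{u_f}2^2$ close to $1+\theta/(1-\theta)>1+\theta$), so the constant $1+\theta$ in the displayed bound should really be read as $1/(1-\theta)$ rather than being provable as stated. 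This is a peripheral point: the deficit estimate, which is the content actually used afterwards, is unaffected, and in the later steps only a bound of order $\theta$ on $\varepsilon^2$ enters, so nothing changes beyond explicit constants.
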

\noindent By homogeneity, if~\eqref{Neighbourhood} holds and $\eta>t\,\varepsilon^2$ for some given $t>0$, using Lemma~\ref{Lem:Stab2}, we obtain
\begin{multline*}
\nrmng{\nabla f}2^2-\frac1{2-p}\(\nrmng f2^2-\nrmng fp^2\)\\
\ge\frac14\,(2-p)\,(1-\theta)\(\nrmng{\nabla(\mathrm{Id}-\Pi_1)\,f}2^2+\frac{t^2\,\nrmng{\nabla(\Pi_1\,f)}2^4}{\nrmng{\nabla f}2^2+\nrmng f2^2}\)\,.
\end{multline*}
This case already covers the result of Theorem~\ref{stability-subcritical-Gaussian} with
\be{cnp1}
c_{n,p}\le c_{n,p}^{(1)}:=\frac14\,(2-p)\,(1-\theta)\min\big\{t,1\big\}\,.
\ee
\begin{proof} The result follows from Proposition~\ref{orth} and from the chain of elementary inequalities
\[
\eta^2\ge\frac12\(\eta^2+t^2\,\varepsilon^4\)\ge\frac12\(\eta^2+\frac{t^2\,\varepsilon^4}{1+\varepsilon^2+\eta^2}\)\,.
\]
\end{proof}

\noindent\textcircled{4} The next part of the proof relies on a Taylor expansion of $\nrmng{u_f}p^2$. With no loss of generality, by rotational invariance, we can assume that $\nu=(1,0,\ldots,0)$ so that with Cartesian coordinates $x=(x_1,x_2,\ldots,x_n)\in\R^n$, we write $u_f(x)=1+\varepsilon\,x_1+\eta\,r(x)$. The following result is at the core of our strategy. It heavily relies on the Gaussian logarithmic Sobolev inequality and new estimates for the remainder terms based on the boundedness of $\irng{r^2\,\log r^2}$.
\begin{lemma}\label{Lem:Stab4} Let $n\ge1$ and $f\in\mathrm H^1(\R^n,d\gamma)$ be a non-negative function such that~\eqref{Neighbourhood} holds. We keep the same notation as in Lemma~\ref{Lem:Stab3} and further assume that $\eta\le t\,\varepsilon^2$ for some $t>0$. Then there is a constant $\mathcal C>0$, depending only on $n$, $p$ and $t$, such that
\begin{multline*}
\nrmng{u_f}p^2\ge1+(p-1)\(\varepsilon^2+\eta^2\nrmng r2^2\)\\
+(p-1)\,(2-p)\(\frac12\,\varepsilon^4+\varepsilon^2\,\eta\irng{x_1^2\,r(x)}\)
-\frac{\mathcal C\,\varepsilon^4}{\log\(\frac1\varepsilon\)}\quad\mbox{as}\quad\varepsilon\to0_+\,.
\end{multline*}
\end{lemma}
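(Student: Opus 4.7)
The approach is to Taylor-expand $(1+v)^p$ to order four in $v:=\varepsilon\,x_1+\eta\,r$, integrate, and then raise to the power~$2/p$. Setting $g(v):=(1+v)^p-1-p\,v$, the identity $\nrmng{u_f}p^p=1+\irng{g(v)}$ follows from $\irng v=0$. For $p\in(1,2)$, $g$ is convex, non-negative on $(-1,+\infty)$, and vanishes to second order at $v=0$; non-negativity is decisive, as it allows me to restrict integration to any subset without losing the lower bound. The inputs available on $r$ are the orthogonality relations $\irng r=\irng{x_i r}=0$ together with $\nrmng{r}2\le 1/2$ and $\nrmng{\nabla r}2=1$.

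I introduce truncation thresholds $M_1:=\sqrt{8\log(1/\varepsilon)}$ for $x_1$ and $M_2:=1/(2t\varepsilon^2)$ for $r$, and the good set $G:=\{|x_1|\le M_1\}\cap\{|r|\le M_2\}$, on which $|v|\le\varepsilon M_1+\eta M_2\le 1$ for $\varepsilon$ small. Taylor's formula gives
\[
g(v)=\tfrac{p(p-1)}{2}v^2+\tfrac{p(p-1)(p-2)}{6}v^3+\tfrac{p(p-1)(p-2)(p-3)}{24}v^4+R(v),\qquad|R(v)|\le C_p|v|^5,
\]
and non-negativity of $g$ yields $\irng{g(v)}\ge\int_G g(v)\,d\gamma$, which is bounded below by the integral over~$G$ of the Taylor polynomial above minus $C_p\int_G|v|^5\,d\gamma$. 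Expanding each power of $v$ and using the orthogonality relations together with $\irng{x_1^{2k+1}}=0$, $\irng{x_1^2}=1$ and $\irng{x_1^4}=3$, the leading part of the integral of the Taylor polynomial over $\R^n$ is
\[
\tfrac{p(p-1)}{2}\bigl(\varepsilon^2+\eta^2\nrmng{r}2^2\bigr)+\tfrac{p(p-1)(p-2)}{2}\,\varepsilon^2\eta\,\irng{x_1^2 r}+\tfrac{p(p-1)(p-2)(p-3)}{8}\,\varepsilon^4,
\]
up to residual cross-terms like $\varepsilon\eta^2\irng{x_1 r^2}$, $\eta^3\irng{r^3}$, $\eta^4\irng{r^4}$ and tail contributions from $G^c$.

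The chief obstacle, and the source of the $1/\log(1/\varepsilon)$ factor, is the control of these residual moments of $r$, since $\mathrm H^1(\R^n,d\gamma)$ admits no Sobolev embedding into $\mathrm L^q(d\gamma)$ for $q>2$. Two ingredients take care of them. Gaussian integration by parts rewrites $\irng{x_1 r^2}=2\irng{r\,\partial_1 r}$, so Cauchy--Schwarz yields $|\irng{x_1 r^2}|\le 2\nrmng r2\nrmng{\nabla r}2\le 1$, which combined with the prefactor $\varepsilon\eta^2\le t^2\varepsilon^5$ is negligible. The Gaussian logarithmic Sobolev inequality gives $\irng{r^2\log r^2}\le 2\nrmng{\nabla r}2^2=2$, hence the tail decay $\int_{\{|r|>M\}}r^2\,d\gamma\le C/\log M$; choosing $M=M_2$ produces the truncation error $\eta^2/\log M_2=O(\varepsilon^4/\log(1/\varepsilon))$ on the coefficient $\eta^2\nrmng r2^2$, which fixes the final rate. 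Gaussian tails give $\gamma(|x_1|>M_1)\le C\varepsilon^4$, Chebyshev gives $\gamma(|r|>M_2)\le C\varepsilon^4$, and the Taylor remainder satisfies $\int_G|v|^5\,d\gamma\le C(\varepsilon^5+\eta^5 M_2^3\nrmng r2^2)=O(\varepsilon^4)$ via the pointwise bound $|r|^5\le M_2^3 r^2$ on $G$.

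Finally, setting $B:=\irng{g(v)}$ so $\nrmng{u_f}p^p=1+B$, I expand $\nrmng{u_f}p^2=(1+B)^{2/p}=1+\tfrac{2}{p}B+\tfrac{2-p}{p^2}B^2+O(B^3)$ and retain contributions through order~$\varepsilon^4$. The linear piece $\tfrac{2}{p}B$ yields $(p-1)(\varepsilon^2+\eta^2\nrmng r2^2)+(p-1)(p-2)\,\varepsilon^2\eta\,\irng{x_1^2 r}+\tfrac{(p-1)(p-2)(p-3)}{4}\varepsilon^4$, while $\tfrac{2-p}{p^2}B^2\approx\tfrac{(p-1)^2(2-p)}{4}\varepsilon^4$ using $B\approx\tfrac{p(p-1)}{2}\varepsilon^2$. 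The elementary identity $(p-2)(p-3)+(2-p)(p-1)=2(2-p)$ merges the two $\varepsilon^4$ pieces into $\tfrac{(p-1)(2-p)}{2}\varepsilon^4$, matching the claimed coefficient, while all accumulated errors fit into the announced $C\varepsilon^4/\log(1/\varepsilon)$ bound.
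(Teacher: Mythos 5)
Your overall strategy---expand $\irng{|u_f|^p}$ jointly in $v=\varepsilon\,x_1+\eta\,r$, restrict to a good set to make all moments finite, invoke the Gaussian logarithmic Sobolev inequality to handle the region where $r$ is large, then raise to the power $2/p$---is the same mechanism as the paper's proof (which instead expands only to second order in $\eta\,r/(1+\varepsilon\,x_1)$ and controls the remainder through $\irng{r^2\log r^2}\le2$). The gap is in your error accounting at the critical order $\varepsilon^4$, which is the entire content of the lemma: every residual must be $O(\varepsilon^4/\log(1/\varepsilon))$, hence $o(\varepsilon^4)$, or it contaminates the exact coefficients $(p-1)$ and $(p-1)(2-p)/2$. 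Your bound for the Taylor remainder, $\int_G|v|^5\,d\gamma\le C\,(\varepsilon^5+\eta^5M_2^3\,\nrmng r2^2)$, is of size comparable to $t^2\varepsilon^4$ when $\eta=t\,\varepsilon^2$ and $M_2=1/(2\,t\,\varepsilon^2)$, i.e.\ genuinely $O(\varepsilon^4)$, so your concluding claim that it ``fits into the announced $C\,\varepsilon^4/\log(1/\varepsilon)$ bound'' is unjustified. The same defect affects the cubic residual $\tfrac16\,p\,(p-1)\,(p-2)\,\eta^3\int_G r^3\,d\gamma$ (negative coefficient, unknown sign of the integral), which you list but never estimate: the crude bound $\eta^3M_2\,\nrmng r2^2$ is again of order $\varepsilon^4$. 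Finally, truncating the $\varepsilon^2x_1^2$ and $\varepsilon^2\eta\,x_1^2\,r$ contributions to $G$ using only Chebyshev ($\gamma(\{|r|>M_2\})\le C\,\varepsilon^4$) together with Cauchy--Schwarz loses yet another term of order $\varepsilon^4$; a H\"older inequality with an exponent strictly larger than $2$ on the $x_1$-factor is needed there.

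These defects are fixable inside your framework, and the fix is a systematic use of the tail bound you state but apply only once: for each problematic $r$-moment, split at an intermediate level $T$ (for instance $T=\varepsilon^{-1/2}$), bound the range $\{|r|\le T\}$ by $\eta^k\,T^{k-2}\,\nrmng r2^2=o(\varepsilon^4)$ and the range $\{|r|>T\}$ by $\eta^k\,M_2^{k-2}\int_{\{|r|>T\}}r^2\,d\gamma\le C\,\varepsilon^4/\log(1/\varepsilon)$, using $\int_{\{|r|>T\}}r^2\,d\gamma\le C/\log T$, which follows from $\irng{r^2\log r^2}\le2$. The paper avoids cubic, quartic and quintic moments of $r$ altogether by expanding only to second order in $\eta\,r/(1+\varepsilon\,x_1)$ and exploiting that the remainder of $s\mapsto(1+s)^p$, $p<2$, beyond second order grows only like $s^2\log(1+s)$, so that the single quantity $\irng{r^2\log r^2}$ suffices. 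As written, your argument does not establish the $O(\varepsilon^4/\log(1/\varepsilon))$ error term claimed in the statement.
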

\begin{proof} This proof is elementary although a little bit lengthy. Several steps are similar to those of~\cite[Section~4]{BDS2022-Sphere} and will be only sketched. Let us split it into three steps.

\medskip\noindent\textit{\textbf{Step 1.}} Let us start with a list of preliminary remarks.

\smallskip\noindent$\bullet$ Let $B_\varepsilon$ be the centred ball of radius $1/(2\,\varepsilon)$, that is,
\[
B_\varepsilon:=\big\{x\in\R^n\,:\,2\,\varepsilon\,|x|<1\big\}
\]
and let $B_\varepsilon^c=\R^n\setminus B_\varepsilon$. We observe that
\[
\gamma(B_\varepsilon^c)=\big|\S^{n-1}\big|\int_{1/(2\,\varepsilon)}^{+\infty}r^{n-1}\,e^{-\frac{r^2}2}\,dr=c_n\,\varepsilon^{2-n}\,e^{-\frac1{8\,\varepsilon^2}}\(1+O\(\varepsilon^2\)\)\quad\mbox{as}\quad\varepsilon\to0_+
\]
with $c_n=2^{3\,(2-n)/2}/\Gamma(n/2)$. Let $\xi_p:=\sup_{\varepsilon\in(0,1/2)}\varepsilon^{-5}\big(\gamma(B_\varepsilon^c)\big)^{(2-p)/2}$. Hence we have
\[
\int_{B_\varepsilon^c}|g|^p\,d\gamma\le\nrmng g2^p\,\big(\gamma(B_\varepsilon^c)\big)^{(2-p)/2}\le\xi_p\,\nrmng g2^p\,\varepsilon^5
\]
for any $g\in\mathrm L^2(\R^n,d\gamma)$, by H\"older's inequality and, as a consequence, 
\be{Outside}
\int_{B_\varepsilon}|g|^p\,d\gamma\le\irng{|g|^p}\le\int_{B_\varepsilon}|g|^p\,d\gamma+\xi_p\,\nrmng g2^p\,\varepsilon^5\quad\forall\,\varepsilon\in(0,1/2)\,.
\ee
{}From now on, we assume without further notice that $x\in B_\varepsilon$ unless it is specified.

\smallskip\noindent$\bullet$ An expansion in Taylor series of $(1+s)^p-1-p\,s$ for $s\le0$ shows that all terms are non-negative:
\[
(1+s)^p\ge1+p\,s+\frac12\,p\,(p-1)\,s^2\quad\forall\,s\in(-1,0]\,.
\]
Applied to $u_f=1+\varepsilon\,x_1+\eta\,r$ whenever $1+\varepsilon\,x_1>0$ and $r\le0$, we obtain
\begin{multline*}
|u_f|^p=\(1+\varepsilon\,x_1+\eta\,r\)^p=\(1+\varepsilon\,x_1\)^p\(1+\frac{\eta\,r}{1+\varepsilon\,x_1}\)^p\\
\ge\(1+\varepsilon\,x_1\)^p\(1+p\,\frac{\eta\,r}{1+\varepsilon\,x_1}+\frac12\,p\,(p-1)\(\frac{\eta\,r}{1+\varepsilon\,x_1}\)^2\)\\
=\(1+\varepsilon\,x_1\)^p+p\(1+\varepsilon\,x_1\)^{p-1}\,\eta\,r+\frac12\,p\,(p-1)\(1+\varepsilon\,x_1\)^{p-2}\,\eta^2\,r^2\,.
\end{multline*}

\smallskip\noindent$\bullet$ Let us consider the case $1+\varepsilon\,x_1>0$ and $r>0$. The function
\[
\rho(s):=\frac1{s^2}\((1+s)^p-1-p\,s-\frac12\,p\,(p-1)\,s^2\)\quad\,\forall\,s\ge0
\]
is bounded. Let us extend $\rho$ by $0$ on $(-1,0)$ and define
\[
\psi_{\varepsilon,\eta,r}(x):=\(1+\varepsilon\,x_1\)^{p-2}\,\rho\(\frac{\eta\,r(x)}{1+\varepsilon\,x_1}\)\,.
\]
With this definition, using $u_f\ge0$ by hypothesis, we obtain
\begin{align*}
|u_f|^p\,&\ge\(1+\varepsilon\,x_1+\eta\,r\)^p\\
&=\(1+\varepsilon\,x_1\)^p+p\(1+\varepsilon\,x_1\)^{p-1}\,\eta\,r+\frac12\,p\,(p-1)\(1+\varepsilon\,x_1\)^{p-2}\,\eta^2\,r^2+\frac12\,p\,(p-1)\,\eta^2\,r(x)^2\,\psi_{\varepsilon,\eta,r}(x)
\end{align*}
with equality whenever $r\ge0$.

As $\eta\to0_+$, $\psi_{\varepsilon,\eta,r}$ converges a.e.~to $0$ on $B_\varepsilon$ uniformly with respect to $\varepsilon\in(0,1/2)$. The dominated convergence theorem is enough to conclude that
\[
\lim_{\eta\to0_+}\int_{B_\varepsilon}r(x)^2\,\psi_{\varepsilon,\eta,r}(x)\,d\gamma=0
\]
for a given function $r$, but this is not enough to conclude uniformly with respect to $r$. To do this, we need more detailed estimates. Notice however that
\[
\|\psi_{\varepsilon,\eta,r}\|_{\mathrm L^\infty(B_\varepsilon)}\le M:=2^{2-p}\,\|\rho\|_{\mathrm L^\infty(\R^+)}
\]
where $M$ is independent of $\varepsilon$, $\eta$ and $r$.

\smallskip\noindent$\bullet$ Since $\int_{B_\varepsilon}x_1^{2\,k}\,d\gamma\ge\int_{B_\varepsilon}x_1^{2\,k+1}\,d\gamma=0$ and $\irng{x_1^{2\,k}}\ge\irng{x_1^{2\,k+1}}=0$ for any $k\in\N$, an expansion in Taylor series of $\(1+\varepsilon\,x_1\)^p$ gives
\begin{multline*}
\irng{\(1+\varepsilon\,x_1\)^p}\ge\irng{\(1+\frac12\,p\,(p-1)\,\varepsilon^2\,x_1^2+\frac1{24}\,p\,(p-1)\,(p-2)\,(p-3)\,x_1^4\,\varepsilon^4\)}\\
=1+\frac12\,p\,(p-1)\,\varepsilon^2+\frac18\,p\,(p-1)\,(p-2)\,(p-3)\,\varepsilon^4\,.
\end{multline*}
By applying~\eqref{Outside} with $g=1+\varepsilon\,x_1$, we obtain
\[
\irng{|1+\varepsilon\,x_1|^p}\le\int_{B_\varepsilon}|1+\varepsilon\,x_1|^p\,d\gamma+\xi_p\(1+\varepsilon^2\)^{p/2}\,\varepsilon^5\,.
\]
Summing up with $\varepsilon^2\le\theta$, we have
\[
\int_{B_\varepsilon}|1+\varepsilon\,x_1|^p\,d\gamma\ge1+\frac12\,p\,(p-1)\,\varepsilon^2+\frac18\,p\,(p-1)\,(p-2)\,(p-3)\,\varepsilon^4-\xi_p\,(1+\theta)^{p/2}\,\varepsilon^5\,.
\]

\smallskip\noindent$\bullet$ Let us estimate $\nrmng{u_f}p^p$ using
\[
\irng{|u_f|^p}\ge\irneps{|u_f|^p}
\]
and
\begin{multline*}
\irneps{|u_f|^p}\ge\irneps{|1+\varepsilon\,x_1|^p}\\
+p\,\eta\irneps{\(1+\varepsilon\,x_1\)^{p-1}\,r}+\frac12\,p\,(p-1)\,\eta^2\irneps{\(1+\varepsilon\,x_1\)^{p-2}\,r^2}\\
+\frac12\,p\,(p-1)\,\eta^2\irneps{r(x)^2\,\psi_{\varepsilon,\eta,r}(x)}\,.
\end{multline*}
We obtain
\begin{multline*}
\irng{|u_f|^p}\ge1+\frac12\,p\,(p-1)\,\varepsilon^2+\frac18\,p\,(p-1)\,(p-2)\,(p-3)\,\varepsilon^4-\xi_p\,(1+\theta)^{p/2}\,\varepsilon^5\\
+p\,\eta\irneps{\(1+\varepsilon\,x_1\)^{p-1}\,r}+\frac12\,p\,(p-1)\,\eta^2\irneps{\(1+\varepsilon\,x_1\)^{p-2}\,r^2}\\
+\frac12\,p\,(p-1)\,\eta^2\irneps{r(x)^2\,\psi_{\varepsilon,\eta,r}(x)}\,.
\end{multline*}

\medskip\noindent\textit{\textbf{Step 2.}} We prove that $\eta^2\irneps{r(x)^2\,\psi_{\varepsilon,\eta,r}(x)}$ is of order $o\big(\varepsilon^4\big)$ as $\eta\le t\,\varepsilon^2\to0$ for a given $t>0$.

\smallskip\noindent$\bullet$ By the logarithmic Sobolev inequality,
\[
\irng{h^2\,\log h^2}\le2\irng{|\nabla h|^2}+\irng{h^2}\,\log\(\irng{h^2}\)\,,
\]
applied to $h=1+(r-1)_+$, we learn that
\[
\irng{h^2\,\log h^2}\le2\irng{|\nabla r|^2}+\(1+\irng{r^2}\)\,\log\(1+\irng{r^2}\)\le2\,\log(2\,e)\,.
\]
Let $\chi:=\mathbb1_{\{\eta\,h>s_0\}}$ for any $s_0>1$ and consider $A_{\varepsilon,\eta,r,s_0}:=\left\{x\in B_\varepsilon\,:\,\eta\,r(x)\le s_0\right\}$. Then we have
\[
\irng{h^2\,\log h^2}\ge\int_{B_\varepsilon\setminus A_{\varepsilon,\eta,r,s_0}}h^2\,\log h^2\,d\gamma=\irneps{\chi\,h^2\,\log h^2}\ge\log\(\frac{s_0}\eta\)^2\irneps{\chi\,r^2}
\]
because $h\ge1$ a.e. and $h>s_0/\eta$ on $B_\varepsilon\setminus A_{\varepsilon,\eta,r,s_0}$ and, as a consequence,
\[
\eta^2\int_{B_\varepsilon\setminus A_{\varepsilon,\eta,r,s_0}}r(x)^2\,\psi_{\varepsilon,\eta,r}(x)\,d\gamma\ge-\,\frac{\log(2\,e)\,M}{\log s_0+\log\(\frac1\eta\)}\,\eta^2\,.
\]

\smallskip\noindent$\bullet$ Let us notice that
\[
\mathcal M=\sup_{s>0}\frac{|\rho(s)|}{\log(1+s)}
\]
is finite. This allows us to write that
\[
\eta^2\int_{A_{\varepsilon,\eta,r,s_0}}r(x)^2\,\psi_{\varepsilon,\eta,r}(x)\,d\gamma\ge-\,2^{2-p}\,\mathcal M\int_{A_{\varepsilon,\eta,r,s_0}}\eta^2\,r^2\,\log(1+2\,\eta\,r)\,\mathbb 1_{\{r>0\}}\,d\gamma\,,
\]
where the restriction to the set $\{r>0\}$ comes from the fact that $\psi_{\varepsilon,\eta,r}(x)=0$ whenever $r(x)\le0$. Now we estimate $\log(1+2\,\eta\,r)$ by
\begin{align*}
&\log(1+2\,\eta\,r)\le\log\(1+2\,\sqrt\eta\)\quad&\mbox{if}\quad0\le r\le\frac1{\sqrt\eta}\,,\\
&\log(1+2\,\eta\,r)\le\frac{\log(1+2\,s_0)}{\log\(\frac1\eta\)}\,\log r^2\quad&\mbox{if}\quad\frac1{\sqrt\eta}\le r\le\frac{s_0}\eta\,,
\end{align*}
and conclude using $\irng{r^2}\le1$ and $\irng{r^2\,\log r^2}\le2\irng{|\nabla r|^2}=2$ by the logarithmic Sobolev inequality that
\[
\eta^2\int_{A_{\varepsilon,\eta,r,s_0}}r(x)^2\,\psi_{\varepsilon,\eta,r}(x)\,d\gamma\ge-\,2^{2-p}\,\mathcal M\,\eta^2\(\theta\,\log\(1+2\,\sqrt\eta\)+\frac{2\,\log(1+2\,s_0)}{\log\(\frac1\eta\)}\)\,.
\]

\medskip\noindent\textit{\textbf{Step 3.}} We compute the contribution of
\[
p\,\eta\irneps{\(1+\varepsilon\,x_1\)^{p-1}\,r}\quad\mbox{and}\quad\frac12\,p\,(p-1)\,\eta^2\irneps{\(1+\varepsilon\,x_1\)^{p-2}\,r^2}
\]
to the expansion of $\irng{|u_f|^p}$.

\smallskip\noindent$\bullet$ Using~\eqref{Outside} applied to $g=|r|^{1/p}$ and the orthogonality constraints on $r$, we obtain
\begin{multline*}
p\,\eta\int_{B_\varepsilon}(1+\varepsilon\,x_1)^{p-1}\,r\,d\gamma\ge p\,\eta\(\irng{(1+\varepsilon\,x_1)^{p-1}\,r}-2^{1-p}\,\xi_p\,\varepsilon^5\)\\
\ge p\,\eta\,\varepsilon^2\(\frac12\,(p-1)\,(p-2)\irng{x_1^2\,r}-\mathsf c_1\,\varepsilon^3-2^{1-p}\,\xi_p\,\varepsilon^5\)
\end{multline*}
where $\mathsf c_1:=\sqrt{15}\,\sup_{s\in(-1,0)\cup(0,+\infty)}\left|(1+s)^{p-1}-1-(p-1)\,s-\frac12\,(p-1)\,(p-2)\,s^2\right|/s^3$

\smallskip\noindent$\bullet$ A similar computation shows that
\[
\frac12\,p\,(p-1)\,\eta^2\irneps{\(1+\varepsilon\,x_1\)^{p-2}\,r^2}=\frac12\,p\,(p-1)\,\eta^2\(\nrmng r2^2-\mathsf c_2\,\varepsilon\)
\]
where $\mathsf c_2:=\sup_{s\in(-1,0)\cup(0,+\infty)}\left|(1+s)^{p-2}-1\right|/s$

\medskip\noindent\textit{\textbf{Step 4.}} Collecting all terms, we have

\begin{multline*}
\irng{|u_f|^p}\ge1+\frac12\,p\,(p-1)\,\varepsilon^2+\frac18\,p\,(p-1)\,(p-2)\,(p-3)\,\varepsilon^4+\frac12\,p\,(p-1)\,\eta^2\,\nrmng r2^2\\
+\frac12\,p\,(p-1)\,(p-2)\,\eta\,\varepsilon^2\,\irng{x_1^2\,r}-\,C\,\frac{\varepsilon^4}{\log\(\frac1\varepsilon\)}
\end{multline*}
for some constant $C>0$ that is explicitly given in terms of $t$, $\xi_p$, $M$, $\mathcal M$, $\mathsf c_1$ and $\mathsf c_2$. In order to conclude, we notice that for any $s>-1$, $\frac{d^4}{ds^4}(1+s)^\frac2p>0$ implies that
\[
(1+s)^\frac2p\ge1+\frac2p\,s+\frac1{p^2}\,(2-p)\,s^2-\frac1{3\,p^3}\,(2-p)\,(4-p)\,s^3\,.
\]
Applied to
\[
\nrmng{u_f}p^2=\(\irng{|u_f|^p}\)^\frac2p=\(\irng{\(1+\varepsilon\,x_1+\eta\,r\)^p}\)^\frac2p\,,
\]
this completes the proof of Lemma~\ref{Lem:Stab4}.
\end{proof}

\begin{proof}[Proof of Theorem~\ref{stability-subcritical-Gaussian}] The strategy of proof is similar to~\cite[Theorem 7]{BDS2022-Sphere}. Up to the replacement of~$v$ by $f=|v|$, we can assume $f\ge0$. With
\[
u_f(x):=\frac{f(x)}{\irng f}=1+\varepsilon\,x_1+\eta\,r(x)
\]
as in Lemma~\ref{Lem:Stab3}, we first notice that the case $\eta>t\,\varepsilon^2$ is already covered in Lemma~\ref{Lem:Stab3}. Otherwise let us assume that $\eta\le t\,\varepsilon^2$ and consider the Hermite polynomial $h_1(x):=x_1^2-1$ and decompose $r$ according~to
\[
r(x):=\alpha\,h_1(x)+\beta\,\tilde r(x)
\]
with $\nrmng{\tilde r}2=1$ so that $\nrmng r2^2=\alpha^2+\beta^2\le1/2$ and $\irng{x_1^2\,r(x)}=\alpha$. With this notation we have
\begin{align*}
&\nrmng{\nabla u_f}2^2=\varepsilon^2+\eta^2\,,\\ 
&\nrmng{u_f}2^2=1+\varepsilon^2+\eta^2\(\alpha^2+\beta^2\)\,,\\
&\nrmng{u_f}p^2\ge1+(p-1)\(\varepsilon^2+\eta^2\(\alpha^2+\beta^2\)\)+(p-1)\,(2-p)\(\frac12\,\varepsilon^4+\alpha\,\varepsilon^2\,\eta\)-\frac{\mathcal C\,\varepsilon^4}{\log\(\frac1\varepsilon\)}\,,
\end{align*}
where the estimate of $\nrmng{u_f}p^2$ comes from Lemma~\ref{Lem:Stab4}. Hence, for some $\lambda>0$ to be fixed, 
\begin{align*}
&\nrmng{\nabla u_f}2^2-\frac1{2-p}\(\nrmng{u_f}2^2-\nrmng{u_f}p^2\)-\lambda\(\eta^2+\frac{t^2\,\varepsilon^4}{1+\varepsilon^2+\eta^2}\)\\
&\ge\eta^2\(1-\alpha^2-\beta^2-\lambda\)+\(\frac{p-1}2-\frac{\lambda\,t^2}{1+\varepsilon^2+\eta^2}\)\varepsilon^4+(p-1)\,\alpha\,\varepsilon^2\,\eta-\frac1{2-p}\,\frac{\mathcal C\,\varepsilon^4}{\log\(\frac1\varepsilon\)}\\
&\ge \(\frac{p-1}2-\frac{\lambda\,t^2}{1+\varepsilon^2+\eta^2} - \frac{\alpha^2 \, (p-1)^2}{ 4 \, \(1 - \alpha^2 - \beta^2 -2 \lambda\right)}\)\varepsilon^4 - \frac1{2-p}\,\frac{\mathcal C\,\varepsilon^4}{\log\(\frac1\varepsilon\)}
\end{align*}
where we applied Young's inequality in the last line. By an appropriate choice of the parameters, for instance $\lambda\in(0,1/16)$ and $t>0$ such that $t^2<8\,(p-1)\,(5-2\,p)/3$, we obtain a positive coefficient in front of $\varepsilon^4$, and the last line is non-negative if $\varepsilon>0$ is taken small enough. Hence, the result with $c_{n,p}=\min\big\{c_{n,p}^{(1)},c_{n,p}^{(2)}\big\}$ with $c_{n,p}^{(1)}$ given by~\eqref{cnp1} and $c_{n,p}^{(2)}:=(1-\theta)\,\lambda$.
\end{proof}

\section*{Acknowledgements}
{\small G.B.~has been funded by the European Union's Horizon 2020 research and innovation program under the Marie Sklodow\-ska-Curie grant agreement No.~754362. The authors thank M.J.~Esteban for useful comments. The authors warmfully thank a referee who replied very quickly with a detailed report and a list of suggestions which greatly improved the revised version of this paper.}\par
\smallskip\noindent{\scriptsize\copyright\,2023 by the authors. This paper may be reproduced, in its entirety, for non-commercial purposes.}
\bibliographystyle{crplain}
\bibliography{BDS2023-PLS}
\end{document}